\newtheorem{example}{Example}
\newtheorem{definition}{Definition}
\newtheorem{corollary}{Corollary}
\newtheorem{proposition}{Proposition}
\newtheorem{remark}{Remark}
\newtheorem{theorem}{Theorem}
\newtheorem{lemma}{Lemma}
\begin{document}

\title{Global controllability properties of linear control systems}
\date{}
\author{Fritz Colonius \\
Institut
für Mathematik, Augsburg Universität,
Germany \and Alexandre J. Santana\thanks{
Partially supported by CNPq grant n. 309409/2023-3} \\
Department of Mathematics, State University of Maringá,
 Brazil}
\maketitle

\begin{abstract}For linear control systems with bounded control range, the
state space is compactified using the Poincar\'{e} sphere. The linearization
of the induced control flow allows the construction of invariant manifolds on
the sphere and of corresponding manifolds in the state space of the linear
control system.
\end{abstract}

\noindent \textit{Key words:} linear control system,  Poincar\'{e} compactification,
invariant manifold
\newline
\noindent {}\noindent \textit{AMS 2020 subject classification}: 93B05, 93C05, 37B20

\section{Introduction\label{Section1}}

We study global controllability properties of linear control systems on
$\mathbb{R}^{n}$ with control restrictions of the form%
\begin{equation}
\dot{x}(t)=Ax(t)+Bu(t),\,u\in\mathcal{U}, \label{linear}%
\end{equation}
where $A\in\mathbb{R}^{n\times n},B\in\mathbb{R}^{n\times m}$, and the set of
control functions is defined by%
\begin{equation}
\mathcal{U}=\{u\in L^{\infty}(\mathbb{R},\mathbb{R}^{m})\left\vert u(t)\in
U\text{ for almost all }t\in\mathbb{R}\right.  \}; \label{U}%
\end{equation}
here the control range $U$ is a bounded neighborhood of $0\in\mathbb{R}^{m}$.
We denote the solution for initial condition $x(0)=x_{0}\in\mathbb{R}^{n}$ and
control $u\in\mathcal{U}$ by $\varphi(t,x_{0},u),t\in\mathbb{R}$.

If the Kalman rank condition $\mathrm{rank}[B~AB\cdots A^{n-1}B]=n$ holds,
there is a unique maximal set $D$, where complete controllability holds, and
$D$ contains the origin in the interior. The present paper analyzes global
controllability properties based on a compactification of $\mathbb{R}^{n}$ by
a variant of the classical Poincar\'{e} compactification from the theory of
polynomial differential equations.

The basic geometric idea is quite simple. A copy of $\mathbb{R}^{n}$ is
attached to the sphere $\mathbb{S}^{n}$ in $\mathbb{R}^{n+1}$ at the north
pole. Then one takes the central projection in $\mathbb{R}^{n+1}$ to the
northern hemisphere $\mathbb{S}^{n,+}$ of $\mathbb{S}^{n}$ called the
Poincar\'{e} sphere. The equator $\mathbb{S}^{n,0}$ of $\mathbb{S}^{n}$
represents infinity since for points in $\mathbb{R}^{n}$ with $\left\Vert
x_{k}\right\Vert \rightarrow\infty$ the images in $\mathbb{S}^{n}$ approach
$\mathbb{S}^{n,0}$. The local analysis of points on $\mathbb{S}^{n,0}$ allows
us to arrive at conclusions about the behavior of the original system
\textquotedblleft near infinity\textquotedblright. In order to work this out,
the machinery of control flows is helpful: The (open loop) behavior of control
systems is described by a continuous flow $\Phi$ on $\mathcal{U}%
\times\mathbb{R}^{n}$ and tools from dynamical systems theory, in particular,
the Selgrade decomposition can be invoked.

The approach of the present paper is mainly motivated by techniques from
Poin\-car\'{e} compactification in the theory of polynomial differential
equations due to Poin\-car\'{e} \cite{Poin}; cf. Cima and Llibre
\cite{CimL90}, Perko \cite{Perko}, Dumortier, Llibre, and Artes \cite{DumLA},
Llibre and Teruel \cite{LlibT}. We use the smooth structure and not only the
topological properties of the associated flows as in the earlier paper
Colonius, Santana, and Viscovini \cite{ColSV24}. We expect that some of these
techniques will also be useful when applied to affine control systems and to
polynomial control systems.

The theory of control flows, control sets, and chain control sets is developed
in Colonius and Kliemann \cite{ColK00} and Kawan \cite{Kawan13}. For further
contributions we refer to Ayala, da Silva, and Mamani \cite{ASM23}, da Silva
\cite{daS23}\textbf{, }Boarotto and Sigalotti \cite{BoaS20}, Tao, Huang, and
Chen \cite{TaoHC}. Cannarsa and Sigalotti \cite{CanS21} show that approximate
controllability for bilinear control systems is equivalent to exact
controllability. The Selgrade decomposition for linear flows on vector bundles
is due to Selgrade \cite{Selg75}; cf. Salamon and Zehnder \cite{SalZ88} and
Colonius and Kliemann \cite{ColK00, ColK14}. For linear flows with chain
transitive base space, Selgrade's theorem provides a Whitney decomposition of
the vector bundle into subbundles such that the projections of the subbundles
to the projective bundle yield the maximal chain transitive sets of the
induced projective flow; cf. e.g. \cite[Theorem 9.2.5]{ColK14}. We will lift
linear control systems of the form (\ref{linear}) to bilinear control systems
on $\mathbb{R}^{n+1}$ and derive Selgrade decompositions for the lifted linear
control flow $\Phi^{1}$ on $\mathcal{U}\times\mathbb{R}^{n+1}$ as well as for
the linearization of the projected control flow $\pi\Phi^{1}$ on
$\mathcal{U}\times\mathbb{S}^{n}$. Furthermore, corresponding invariant
manifolds in $\mathbb{S}^{n}$ and $\mathbb{R}^{n}$ are constructed. For some
pertinent references concerning invariant manifolds, see the introduction of
Section \ref{Section5}.

The main results of this paper are Corollary \ref{Corollary_limit} clarifying
the limit behavior for time tending to infinity of trajectories on the
Poincar\'{e} sphere $\mathbb{S}^{n}$. Theorem \ref{Theorem_LinLyap} determines
the Lyapunov exponents for the induced control flow and Corollary
\ref{Corollary_stable_man} describes stable manifolds on $\mathbb{S}^{n}$.
Consequences for the original linear control system in $\mathbb{R}^{n}$ are
drawn in Theorem \ref{Theorem_R}.

The contents of this paper are as follows. Section \ref{Section2} contains
preliminary results on control sets, chain control sets, and control flows for
control-affine systems on manifolds. Results from Colonius, Santana, and
Viscovini \cite{ColSV24} for linear control systems are recalled. Section
\ref{Section3} uses the Selgrade decomposition of the lifted control flow
$\Phi^{1}$ on $\mathcal{U}\times\mathbb{R}^{n+1}$ to characterize the limit
behavior of trajectories on the Poincar\'{e} sphere $\mathbb{S}^{n}$ by the
chain control sets. There are one or two chain control sets which are not
subsets of the equator $\mathbb{S}^{n,0}$. The Lyapunov spaces $L(\lambda
_{i})$ of the matrix $A$ yield maximal chain transitive sets $_{\mathbb{S}%
}L(\lambda_{i})^{\infty}$ for the induced flow on the equator $\mathbb{S}%
^{n,0}$. In Section \ref{Section4} the induced control flow $\pi\Phi^{1}$ on
$\mathcal{U}\times\mathbb{S}^{n}$ is linearized. When the base space of the
linearized flow $T\pi\Phi$ is restricted to $\mathcal{U}\times\,_{\mathbb{S}%
}L(\lambda_{i})^{\infty}$, the corresponding Selgrade decomposition and the
Lyapunov exponents are determined.\ Section \ref{Section5} contains results on
stable manifolds and Section \ref{Section6} presents several examples.

\textbf{Notation.} The closure of a set $A$ in a metric space is denoted by
$\overline{A}$. The origin in $\mathbb{R}^{n}$ is $0_{n}$ and $0_{1}$ is
abbreviated by $0$. The projection from $\mathbb{R}_{0}^{n}=\mathbb{R}%
^{n}\setminus\{0_{n}\}$ to the sphere $\mathbb{S}^{n-1}$ is $\pi x=\frac
{x}{\left\Vert x\right\Vert }$ for $x\in\mathbb{R}_{0}^{n}$.

\section{Preliminaries\label{Section2}}

For general nonlinear control systems, control sets, chain control sets, and
control flows are defined and some of their properties are recalled. Then, for
linear control systems, control sets and chain control sets are characterized.

\subsection{Control sets, chain control sets, and control
flows\label{Subsection2.1}}

Consider control-affine systems of the form
\begin{equation}
\dot{x}(t)=X_{0}(x(t))+\sum_{i=1}^{m}u_{i}(t)X_{i}(x(t)),\,u\in\mathcal{U},
\label{control1}%
\end{equation}
where $X_{0},X_{1},\mathbb{\ldots},X_{m}$ are smooth ($C^{\infty}$-)vector
fields on a smooth manifold $M$ and%
\[
\mathcal{U}=\left\{  u\in L^{\infty}(\mathbb{R},\mathbb{R}^{m})\left\vert
u(t)\in U\text{ for almost all }t\in\mathbb{R}\right.  \right\}  ,
\]
where $U$ is a compact convex neighborhood of $0\in\mathbb{R}^{m}$. We assume
that for every control $u\in\mathcal{U}$ and every initial state
$x(0)=x_{0}\in M$ there exists a unique (Carath\'{e}odory) solution
$\varphi(t,x_{0},u),t\in\mathbb{R}$. Background on linear and nonlinear
control systems is provided by the monographs Sontag \cite{Son98}, Jurdjevic
\cite{Jur97}.

For $x\in M$ the controllable set $\mathbf{C}(x)$ and the reachable set
$\mathbf{R}(x)$ are defined as
\begin{align*}
\mathbf{C}(x)  &  =\left\{  y\in M\left\vert \exists u\in\mathcal{U}~\exists
T>0:\varphi(T,y,u)=x\right.  \right\}  ,\\
\mathbf{R}(x)  &  =\left\{  y\in M\left\vert \exists u\in\mathcal{U}~\exists
T>0:y=\varphi(T,x,u)\right.  \right\}  .
\end{align*}
The following definition introduces sets of complete approximate controllability.

\begin{definition}
\label{Definition_control_sets}A nonvoid set $D\subset M$ is called a control
set of system (\ref{control1}) if it has the following properties: (i) for all
$x\in D$ there is a control $u\in\mathcal{U}$ such that $\varphi(t,x,u)\in D$
for all $t\geq0$, (ii) for all $x\in D$ one has $D\subset\overline
{\mathbf{R}(x)}$, and (iii) $D$ is maximal with these properties, that is, if
$D^{\prime}\supset D$ satisfies conditions (i) and (ii), then $D^{\prime}=D$.
\end{definition}

Next we introduce a notion of controllability in infinite time allowing for
(small) jumps between pieces of trajectories. We fix a metric $d$ compatible
with the topology of $M$.

\begin{definition}
\label{intro2:defchains}Let $x,y\in M$. For $\varepsilon,\tau>0$ a controlled
$(\varepsilon,\tau)$\textit{-chain} $\zeta$ from $x$ to $y$ is given by
$k\in\mathbb{N},\ x_{0}=x,x_{1},\ldots,x_{k}=y\in M,\ u_{0},\ldots,u_{k-1}%
\in\mathcal{U}$, and $T_{0},\ldots,T_{k-1}\geq\tau$ with
\[
d(\varphi(T_{j},x_{j},u_{j}),x_{j+1})<\varepsilon\text{ }\,\text{for\thinspace
all}\,\,\,j=0,\ldots,k-1.
\]
If for every $\varepsilon,\tau>0$ there is a controlled $(\varepsilon,\tau
)$-chain from $x$ to $y$, the point $x$ is chain controllable to $y$.
\end{definition}

The chain reachable set is%
\begin{equation}
\mathbf{R}^{c}(x)=\left\{  y\in M\left\vert \forall\varepsilon,\tau
>0~\exists~\text{a controlled }(\varepsilon,\tau)\text{\textit{-chain from }%
}x\text{ to }y\right.  \right\}  . \label{R_c}%
\end{equation}
In analogy to control sets, we define chain control sets as maximal chain
controllable sets.

\begin{definition}
\label{Definition_chain_control}A nonvoid set $E\subset M$ is called a
\textit{chain control set} of system (\ref{control1}) if for all $x,y\in E$
and $\varepsilon,\tau>0$ there is a controlled $(\varepsilon,\tau)$-chain from
$x$ to $y$, and $E$ is maximal with this property.
\end{definition}

The control flow associated with control system (\ref{control1}) is the flow
on $\mathcal{U}\times M$ defined by%
\begin{equation}
\Phi:\mathbb{R}\times\mathcal{U}\times M\rightarrow\mathcal{U}\times
M,\Phi_{t}(u,x)=(\theta_{t}u,\varphi(t,x,u)), \label{control_flow}%
\end{equation}
where $\theta_{t}u=u(t+\cdot)$ is the right shift on $\mathcal{U}$. Note that
$\Phi_{t+\tau}=\Phi_{t}\circ\Phi_{\tau}$ for $t,\tau\in\mathbb{R}$. The space
$\mathcal{U}$ is a compact metrizable space with respect to the weak$^{\ast}$
topology of $L^{\infty}(\mathbb{R},\mathbb{R}^{m})$ (we fix such a metric) and
the shift flow $\theta$ is\ continuous; cf. Kawan \cite[Proposition
1.15]{Kawan13}. Furthermore, also the flow $\Phi$ is continuous; cf.
\cite[Proposition 1.17]{Kawan13}.

A chain transitive set of the control flow is a subset of $\mathcal{U}\times
M$ such that for all $(u,x),(v,y)\in\mathcal{U}\times M$ and all
$\varepsilon,\,\tau>0$ there is an $(\varepsilon,\tau)$-chain $\zeta$ for
$\Phi$ from $(u,x)$ to $(v,y)$ given by $k\in\mathbb{N}\mathbf{,}$
$T_{0},\ldots,T_{k-1}\geq\tau$, and $(u_{0},x_{0})=(u,x),\ldots,(u_{k-1}%
,x_{k-1}),\allowbreak(u_{k},x_{k})=(v,y)\in\mathcal{U}\times M$ with
$d(\Phi(T_{i},u_{i},x_{i}),(u_{i+1},x_{i+1}))<\varepsilon$ for $i=0,\ldots
,k-1$. The relation between chain control sets and the control flow $\Phi$
defined in (\ref{control_flow}) is explained in the following theorem; cf.
Colonius, Santana, and Viscovini \cite[Theorem 2.15]{ColSV24}.

\begin{theorem}
\label{Theorem_equivalence}Let $\mathcal{E}\subset\mathcal{U}\times M$ be a
maximal chain transitive set for the control flow $\Phi$. Then $\left\{  x\in
M\left\vert \exists u\in\mathcal{U}:(u,x)\in\mathcal{E}\right.  \right\}  $ is
a chain control set. Conversely, if $E\subset M$ is a chain control set, then%
\begin{equation}
\mathcal{E}:=\{(u,x)\in\mathcal{U}\times M\left\vert \varphi(t,x,u)\in E\text{
for all }t\in\mathbb{R}\right.  \} \label{lift_E}%
\end{equation}
is a maximal chain transitive set.
\end{theorem}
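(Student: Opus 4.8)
The plan is to prove both directions of the equivalence between maximal chain transitive sets of the control flow $\Phi$ and chain control sets of the system, using the projection $p:\mathcal{U}\times M\to M$ and exploiting the fact that chains for $\Phi$ project to controlled chains and, conversely, controlled chains can be lifted (up to small perturbations in the $\mathcal{U}$-coordinate) to chains for $\Phi$.

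\textbf{Direction 1: from $\mathcal{E}$ to a chain control set.} Suppose $\mathcal{E}\subset\mathcal{U}\times M$ is a maximal chain transitive set for $\Phi$, and set $E:=\{x\in M\mid \exists u\in\mathcal{U}:(u,x)\in\mathcal{E}\}$. First I would show $E$ is chain controllable: given $x,y\in E$, pick $u,v$ with $(u,x),(v,y)\in\mathcal{E}$; an $(\varepsilon,\tau)$-chain for $\Phi$ from $(u,x)$ to $(v,y)$ has components $(u_i,x_i)$ with $d_{\mathcal{U}\times M}(\Phi(T_i,u_i,x_i),(u_{i+1},x_{i+1}))<\varepsilon$, and since the metric on the product dominates (a constant multiple of) the metric on $M$, the points $x_0=x,x_1,\dots,x_k=y$ with controls $u_i$ and times $T_i\ge\tau$ form a controlled $(\varepsilon',\tau)$-chain from $x$ to $y$ for a suitable $\varepsilon'$; letting $\varepsilon\to 0$ gives chain controllability. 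For maximality, suppose $E\subsetneq E'$ with $E'$ chain controllable; I would lift $E'$ to $\mathcal{E}':=\{(u,x):\varphi(t,x,u)\in E'\text{ for all }t\in\mathbb{R}\}$ or rather to the union of $\mathcal{E}$ with an orbit through a new point of $E'$, and use the standard fact (as in \cite{ColK00}, \cite{Kawan13}) that the lift of a controlled chain to a chain for $\Phi$ works because one can concatenate control functions and the shift flow $\theta$ is chain transitive on the relevant piece; this contradicts maximality of $\mathcal{E}$.

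\textbf{Direction 2: from a chain control set $E$ to $\mathcal{E}$.} Let $E\subset M$ be a chain control set and define $\mathcal{E}$ by (\ref{lift_E}). The key lemma here — which I expect to be the main technical obstacle — is the \emph{lifting of controlled chains}: given $x,y\in E$ and a controlled $(\varepsilon,\tau)$-chain from $x$ to $y$ with data $x_i,u_i,T_i$, one must produce a genuine $(\varepsilon',\tau)$-chain for $\Phi$ from some $(u,x)$ to some $(v,y)$ in $\mathcal{E}$. The subtlety is twofold: one must control the $\mathcal{U}$-coordinate (using that $\mathcal{U}$ with the weak$^*$ metric is compact and that concatenations $u_0\wedge_{T_0}u_1\wedge\cdots$ can be approximated, plus that $\theta$ has dense orbits on suitable pieces so any prescribed endpoint shift can be chain-reached), and one must ensure all iterates stay in $E$, using the characterization of chain control sets as maximal chain transitive sets and an approximation/limit argument to pass from $\varepsilon$-chains back into $E$. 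Concretely I would: (a) show $\mathcal{E}\neq\emptyset$ (pick $x\in E$, use condition in Definition \ref{Definition_chain_control} with $y=x$ to build a trajectory staying $\varepsilon$-close to $E$, then take a limit using compactness of $\mathcal{U}$ and closedness of $E$ — here one needs $E$ closed, which follows from its maximality), (b) show $\mathcal{E}$ is chain transitive for $\Phi$ by lifting controlled chains in $E$, and (c) show $\mathcal{E}$ is maximal by noting that any larger chain transitive set would project (by Direction 1's argument) to a chain controllable set strictly containing $E$, contradicting maximality of $E$.

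The heart of the matter is the chain-lifting lemma in step (b): translating approximate controllability statements purely in $M$ into approximate controllability for the pair flow on $\mathcal{U}\times M$. I would handle this exactly as in \cite[Theorem 2.15]{ColSV24} (which is the cited source), invoking the compactness of $(\mathcal{U},d_{\mathcal{U}})$ and the continuity of $\Phi$ established from \cite[Propositions 1.15, 1.17]{Kawan13}, together with the standard observation that for any two controls $u,v$ and any $\tau>0$ there is, for every $\varepsilon>0$, a time $T\ge\tau$ with $d_{\mathcal{U}}(\theta_T u, v)<\varepsilon$ \emph{after an admissible concatenation} — so the control coordinate never obstructs chain transitivity. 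Everything else is bookkeeping: closedness of chain control sets from maximality, invariance of the lift under $\Phi$ by construction, and the projection argument for maximality.
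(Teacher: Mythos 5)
The paper itself does not prove this theorem; it cites \cite[Theorem 2.15]{ColSV24}, so there is no in-paper proof to compare against, and your proposal has to be judged against the standard argument from that literature. Structurally your outline is sound and matches that argument: project chains for $\Phi$ to controlled chains for Direction 1, lift controlled chains to chains for $\Phi$ for Direction 2, handle nonvoidness of $\mathcal{E}$ by a compactness/limit argument, and get maximality in both directions by playing the two lifts off each other. You also correctly flag the two genuine technical points — closedness of $E$ (from maximality of chain control sets) and the chain-lifting lemma.

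There is, however, a concrete imprecision in how you state the chain-lifting ingredient, and it matters because it is the heart of Direction 2. You write that ``for any two controls $u,v$ and any $\tau>0$ there is, for every $\varepsilon>0$, a time $T\ge\tau$ with $d_{\mathcal{U}}(\theta_T u, v)<\varepsilon$.'' As a statement about the shift flow $\theta$ on $\mathcal{U}$ this is false: it would assert that every $v\in\mathcal{U}$ lies in the $\omega$-limit set of every $u$, whereas what is actually true (and what is needed) is only that $(\mathcal{U},\theta)$ is \emph{chain} transitive, so one may insert additional chain steps — with small jumps in $\mathcal{U}$, not a single shift — to align the control coordinate. The qualifier ``after an admissible concatenation'' does not rescue the statement as written, because concatenating controls changes the point of $\mathcal{U}$ you are at, not where a single shift of a fixed $u$ lands. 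The correct device is to replace each $u_i$ in the controlled chain by a control agreeing with $u_i$ on $[0,T_i]$ and suitably extended, and then to use chain transitivity of $\theta$ together with the fact that the weak$^*$ metric sees only bounded time windows, so that for $T_i$ large the mismatch in the control coordinate can be absorbed into the $\varepsilon$-jump. Your sketch gestures at this but does not supply it; since this lemma is exactly what the cited reference carries, your proposal is an accurate outline but not a self-contained proof.

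One further small gap in Direction 1: you need $\mathcal{E}$ to actually coincide with the full-trajectory lift $\{(u,x):\varphi(t,x,u)\in E\ \forall t\}$ of its projection $E$, not merely be contained in it (the containment is immediate from invariance of $\mathcal{E}$). The reverse containment again uses the chain-lifting lemma, so the two directions are more entangled than your writeup suggests; the cleanest order is to prove the lifting lemma first and then derive both directions and both maximality claims from it.
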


\subsection{The control sets and chain control sets of linear control systems}

We consider linear control systems of the form (\ref{linear}) and, throughout
the rest of this paper, we assume that the control range $U$ is a compact and
convex neighborhood of the origin. This is not a restriction since for a
bounded neighborhood $U$ of the origin in $\mathbb{R}^{m}$ the trajectories
for controls taking values in the closed convex hull of $U$ can, uniformly on
bounded intervals, be approximated by trajectories for controls with values in
$U$ (cf. Lee and Markus \cite[Theorem 1A, p. 164]{LeeM} for this classical result).

Let $\psi(t,x)=\mathrm{e}^{At}x,t\in\mathbb{R}$, be the linear flow on
$\mathbb{R}^{n}$ generated by $\dot{x}=Ax$. The Lyapunov exponents
$\lambda(x):=\lim_{t\rightarrow\pm\infty}\frac{1}{t}\log\left\Vert
\psi(t,x)\right\Vert $ are equal to the real parts of the eigenvalues of $A$
and the Lyapunov spaces $L(\lambda_{i})$ are the sums of the (real)
generalized eigenspaces for eigenvalues with real parts $\lambda_{i}$. The
following theorem characterizes the Lyapunov spaces of $\dot{x}=Ax$ by the
maximal chain transitive sets of the induced flow on projective space.

\begin{theorem}
\label{thmprojMorse}Let $\mathbb{P}\psi$ be the projection onto $\mathbb{P}%
^{n-1}$ of the linear flow $\psi(t,x)=\mathrm{e}^{At}x$ on $\mathbb{R}^{n}$
and denote the Lyapunov exponents by $\lambda_{1}>\cdots>\lambda_{\ell}%
,1\leq\ell\leq n$.

(i) Then the state space $\mathbb{R}^{n}\mathbb{\ }$can be decomposed into the
Lyapunov spaces $L(\lambda_{i})$,%
\begin{equation}
\mathbb{R}^{n}=L(\lambda_{1})\oplus\cdots\oplus L(\lambda_{\ell}).
\label{Lyap1}%
\end{equation}

(ii) The projections $\mathbb{P}L(\lambda_{i})$ to $\mathbb{P}^{n-1}$ of the
Lyapunov spaces $L(\lambda_{i})$ are the maximal chain transitive sets of the
induced flow $\mathbb{P}\psi$ on $\mathbb{P}^{n-1}$.
\end{theorem}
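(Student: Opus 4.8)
The plan is to handle the two parts separately: part (i) is essentially the real Jordan decomposition, while part (ii) combines a Morse decomposition built from the ordering $\lambda_{1}>\cdots>\lambda_{\ell}$ with a chain transitivity statement for each individual projected Lyapunov space.

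For part (i) I would invoke the real Jordan normal form of $A$: $\mathbb{R}^{n}$ is the direct sum of the real generalized eigenspaces, and grouping those whose eigenvalues share the same real part $\lambda_{i}$ yields the $A$-invariant splitting (\ref{Lyap1}). A direct estimate on $\left\Vert \mathrm{e}^{At}x\right\Vert $ for $x$ in a single generalized eigenspace (growth of order $\mathrm{e}^{\lambda_{i}t}$ times a polynomial in $t$ as $t\to\pm\infty$) shows that $\lambda(x)=\lambda_{i}$ precisely on $L(\lambda_{i})\setminus\{0_{n}\}$, which identifies the summands intrinsically.

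For part (ii) I would first record that the sets $\mathbb{P}L(\lambda_{i})$ are compact, $\mathbb{P}\psi$-invariant, pairwise disjoint (since $L(\lambda_{i})\cap L(\lambda_{j})=\{0_{n}\}$ for $i\neq j$), and cover $\mathbb{P}^{n-1}$. Then, for $0\leq k\leq\ell$, set $\mathcal{A}_{k}:=\mathbb{P}(L(\lambda_{1})\oplus\cdots\oplus L(\lambda_{k}))$, with $\mathcal{A}_{0}=\emptyset$ and $\mathcal{A}_{\ell}=\mathbb{P}^{n-1}$. Decomposing $x=x'+x''$ along $\mathbb{R}^{n}=(L(\lambda_{1})\oplus\cdots\oplus L(\lambda_{k}))\oplus(L(\lambda_{k+1})\oplus\cdots\oplus L(\lambda_{\ell}))$ and using the rate estimates from part (i), one gets $\left\Vert \mathrm{e}^{At}x''\right\Vert /\left\Vert \mathrm{e}^{At}x'\right\Vert \to0$ as $t\to+\infty$ whenever $x'\neq0_{n}$, so the forward orbit of $[x]$ under $\mathbb{P}\psi$ converges to $\mathcal{A}_{k}$; hence $\mathcal{A}_{k}$ is an attractor of $\mathbb{P}\psi$ with complementary repeller $\mathcal{R}_{k}=\mathbb{P}(L(\lambda_{k+1})\oplus\cdots\oplus L(\lambda_{\ell}))$ (same argument in backward time). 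The resulting nested family of attractors defines a Morse decomposition whose Morse sets are $\mathcal{A}_{k}\cap\mathcal{R}_{k-1}=\mathbb{P}\bigl((L(\lambda_{1})\oplus\cdots\oplus L(\lambda_{k}))\cap(L(\lambda_{k})\oplus\cdots\oplus L(\lambda_{\ell}))\bigr)=\mathbb{P}L(\lambda_{k})$. Since the maximal chain transitive sets of a flow on a compact metric space refine every Morse decomposition and lie in the union of its Morse sets, it remains only to show that each $\mathbb{P}L(\lambda_{i})$ is itself chain transitive; then it cannot be split further and, together with disjointness and covering, the claim follows.

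The main obstacle is this last step. I would reduce it to the case of a single, vanishing Lyapunov exponent: replacing $A|_{L(\lambda_{i})}$ by $B:=A|_{L(\lambda_{i})}-\lambda_{i}\,\mathrm{Id}$ does not change the induced projective flow on $\mathbb{P}L(\lambda_{i})$, and all eigenvalues of $B$ lie on the imaginary axis. For such $B$ the linear flow $\mathrm{e}^{Bt}$ admits no exponential dichotomy and all its Lyapunov exponents equal $0$, so by the autonomous (one-point-base) case of Selgrade's theorem, cf. \cite[Theorem 9.2.5]{ColK14}, the projectivized flow of $\mathrm{e}^{Bt}$ has no nontrivial attractor--repeller decomposition; by Conley's characterization of chain transitivity (absence of a proper attractor), $\mathbb{P}L(\lambda_{i})$ is chain transitive. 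Alternatively one can argue directly: writing $B=S+N$ with $S$ semisimple, $N$ nilpotent, $SN=NS$, the closure of $\{\mathrm{e}^{St}\}$ is a compact (toral) group, so along suitable times $t_{j}\to\infty$ one has $\mathrm{e}^{St_{j}}\to\mathrm{Id}$, whence the orbit of $[x]$ approaches $[\,\mathrm{e}^{Nt_{j}}x\,]\to[N^{p}x]$ with $p$ maximal such that $N^{p}x\neq0$; an induction on the nilpotency degree, exploiting that arbitrarily small jumps are allowed in an $(\varepsilon,\tau)$-chain and that the most degenerate stratum of $N$ carries a chain transitive quasi-periodic flow, then connects any two points of $\mathbb{P}L(\lambda_{i})$. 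I expect the simultaneous bookkeeping of the strata of $N$ and the toral factor to be the delicate point in the direct argument, which is why invoking the autonomous Selgrade theorem is the cleaner route.
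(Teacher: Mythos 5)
Your proposal is correct and follows essentially the same route as the paper's sketch: reduce part (ii) to chain transitivity of the projected central Lyapunov space by subtracting $\lambda_{i_{0}}I$, which leaves the projective flow on $\mathbb{P}L(\lambda_{i_{0}})$ unchanged. The paper treats assertion (i) as clear, points to \cite[Section 4.1]{ColK14} for a full proof of (ii), and then sketches the ``simpler'' argument by citing \cite[Theorem 3.2]{ColSV24} for the fact that $L(0)$ is a maximal chain transitive set and shifting to reduce the other $L(\lambda_{i})$ to that case. What you add is the explicit Morse decomposition via the nested attractors $\mathcal{A}_{k}=\mathbb{P}(L(\lambda_{1})\oplus\cdots\oplus L(\lambda_{k}))$ with complementary repellers $\mathcal{R}_{k}$, which pins down that the $\mathbb{P}L(\lambda_{i})$ form a Morse decomposition and hence that chain transitivity of each is exactly what is needed for maximality; the paper leaves this bookkeeping to the citation. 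For the central chain transitivity itself, where the paper simply cites \cite{ColSV24}, you offer two arguments. The Selgrade route is sound in spirit but a bit quick as phrased: Selgrade's theorem alone does not say the attractor--repeller structure is trivial; you need the extra step that all Lyapunov exponents of $\mathrm{e}^{Bt}$ vanish, so no pair of complementary invariant subspaces can be exponentially separated, and therefore the Selgrade decomposition must consist of a single bundle. (Also, ``exponential dichotomy'' is a slight misnomer here; exponential \emph{separation} is the relevant and weaker notion used in Selgrade's theorem.) Your second, direct argument via $B=S+N$ with semisimple $S$ and nilpotent $N$ is the kind of proof one expects behind \cite[Theorem 3.2]{ColSV24}; you rightly flag the inductive bookkeeping over the nilpotent strata and the toral closure of $\{\mathrm{e}^{St}\}$ as the delicate point. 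Net: same strategy as the paper, executed with more of the argument made explicit where the paper leans on references.
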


Assertion (i) is clear. A proof of assertion (ii) is contained in Colonius and
Kliemann \cite[Section 4.1]{ColK14}. A simpler proof can be given, when one
uses that the center Lyapunov space $L(0)$ of $\dot{x}=Ax$ is a maximal chain
transitive set if $0$ is a Lyapunov exponent; cf. Colonius, Santana, and
Viscovini \cite[Theorem 3.2]{ColSV24}. The other Lyapunov spaces
$L(\lambda_{i})$ are maximal chain transitive sets for the shifted systems
$\dot{x}=(A-\lambda_{i}I)x$. Then also the projections $\mathbb{P}%
L(\lambda_{j})$ to $\mathbb{P}^{n-1}$ are chain transitive.

It is convenient to introduce the space $L^{0}$ which coincides with the
Lyapunov space $L(0)$ if $0$ is a Lyapunov exponent and is trivial otherwise.
The space $\mathbb{R}^{n}$ can be decomposed into the sum of the unstable,
center, and stable subspaces of $A$ given by%
\begin{equation}
\mathbb{R}^{n}=L^{+}\oplus L^{0}\oplus L^{-}, \label{decomposition1}%
\end{equation}
where $L^{+}=\bigoplus_{\lambda_{i}>0}L(\lambda_{i})$ and $L^{-}%
=\bigoplus_{\lambda_{i}<0}L(\lambda_{i})$. Denote the associated projections
by $\pi^{0}:\mathbb{R}^{n}\rightarrow L^{0}$ and $\pi^{h}:\mathbb{R}%
^{n}\rightarrow L^{+}\oplus L^{-}$.

Next we characterize the control set and the chain control set. Colonius and
Santana \cite[Corollary 1]{ColSan11} shows that, for every $u\in\mathcal{U}$,
there is a unique bounded solution $e(u,t),t\in\mathbb{R},$ of the
differential equation induced on the hyperbolic subspace $L^{+}\oplus L^{-}$,%
\begin{equation}
\dot{y}(t)=A\pi^{h}y(t)+\pi^{h}Bu(t). \label{hyp}%
\end{equation}
Note that $e(u,t)=e(u(t+\cdot),0)$ for $t\in\mathbb{R}$ and $u\in\mathcal{U}$.

The following theorem characterizes the control set containing $0\in
\mathbb{R}^{n}$ and the unique chain control set of a linear control system.

\begin{theorem}
\label{Theorem_cs}Consider the linear control system (\ref{linear}).

(i) There is a control set $D_{0}$ with $0\in D_{0}$. It is convex and
satisfies%
\[
D_{0}=\left(  \mathbf{C}(0)\cap L^{+}\right)  \oplus\left(  L^{0}%
\cap\operatorname{Im}[B~AB\cdots A^{n-1}B]\right)  \oplus(\overline
{\mathbf{R}(0)}\cap L^{-}).
\]
The sets $\mathbf{C}(0)\cap L^{+}$ and $\overline{\mathbf{R}(0)}\cap L^{-}$
are bounded. If $\operatorname{Im}[B~AB\cdots A^{n-1}B]=\mathbb{R}^{n}$, then
there is a unique control set $D$ with nonvoid interior, and $D=D_{0}$.

(ii) There exists a unique chain control set $E$. It is given by%
\begin{align*}
E  &  =\overline{D_{0}}+L^{0}=(\overline{\mathbf{C}(0)}\cap L^{+})\oplus
L^{0}\oplus(\overline{\mathbf{R}(0)}\cap L^{-})\\
&  =\left\{  e(u,t)+y\left\vert u\in\mathcal{U},t\in\mathbb{R}\text{, and
}y\in L^{0}\right.  \right\}  .
\end{align*}

\end{theorem}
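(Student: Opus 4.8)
The plan is to prove Theorem~\ref{Theorem_cs} in two stages, treating the control set $D_0$ first and then deducing the description of the chain control set $E$.

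\textbf{Part (i).} First I would reduce to the three invariant subspaces $L^+$, $L^0$, $L^-$ using the decomposition $\mathbb{R}^n = L^+\oplus L^0\oplus L^-$ from \eqref{decomposition1}. Since $A$ leaves each summand invariant and $B$ splits accordingly, the control system decouples, so I would analyze the behaviour on each piece separately. On the hyperbolic part $L^+\oplus L^-$ I would invoke the result of Colonius and Santana \cite{ColSan11} recalled just before the theorem: the existence of the unique bounded solution $e(u,t)$ of \eqref{hyp}. Trajectories approach $e(u,\cdot)$ from inside $L^-$ (stable directions) and are reached from $e(u,\cdot)$ inside $L^+$ (unstable directions), which is why $\mathbf{C}(0)\cap L^+$ and $\overline{\mathbf{R}(0)}\cap L^-$ appear, and why these sets are bounded (they sit inside a neighbourhood of the bounded curve $e(u,\cdot)$, controlled by the fact that $U$ is bounded together with the hyperbolic estimates $\|\mathrm{e}^{At}|_{L^-}\|\le c\,\mathrm{e}^{-\alpha t}$, $t\ge 0$, and symmetrically on $L^+$). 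On the center part $L^0$, since all Lyapunov exponents vanish, the Kalman/controllability subspace $L^0\cap\operatorname{Im}[B\,AB\cdots A^{n-1}B]$ is exactly the set of states that can be steered to and from $0$, and it is a linear subspace, hence unbounded (unless trivial). Next I would argue that $D_0$, defined as the set on the right-hand side, is indeed a control set: property (i) of Definition~\ref{Definition_control_sets} follows by concatenating the invariant/controlled trajectories on each summand; property (ii), approximate reachability $D_0\subset\overline{\mathbf{R}(x)}$, follows from the local accessibility granted by the rank condition on the relevant subspace together with the attracting behaviour on $L^-$ and openness of reachable sets in the interior; maximality follows because any larger set satisfying (i)--(ii) would have to contain points outside the bounded sets on $L^{\pm}$, contradicting that forward trajectories from such points escape to infinity (on $L^+$) or cannot return (using the hyperbolic splitting). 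Convexity of $D_0$ follows from linearity of the system and convexity of $U$: the reachable sets $\mathbf{R}(0)$ and controllable sets $\mathbf{C}(0)$ are convex, and a product/sum of convex sets is convex. Finally, if $\operatorname{Im}[B\,AB\cdots A^{n-1}B]=\mathbb{R}^n$, any control set with nonvoid interior must contain $0$ in its closure (again by accessibility plus the structure of reachable sets through the origin), forcing it to equal $D_0$; I would cite the uniqueness statement essentially from \cite[Section 4.1]{ColK14} or \cite{ColSV24}.

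\textbf{Part (ii).} For the chain control set, the natural strategy is: (a) show $E:=\overline{D_0}+L^0$ is chain controllable, and (b) show it is maximal. For (a), inside $\overline{D_0}$ exact approximate controllability already holds, so chains certainly exist there; adding $L^0$ is harmless because on the center subspace the flow $\mathrm{e}^{At}|_{L^0}$ has zero Lyapunov exponents, so one can use arbitrarily small jumps accumulated over long times to move anywhere within $L^0$ — this is precisely the mechanism by which $L^0$ becomes chain transitive even when it is not controllable, and I would cite \cite[Theorem 3.2]{ColSV24} for the statement that $L^0$ is a chain transitive set of $\dot x = Ax$. Combining a chain inside $\overline{D_0}$ with a center-direction chain gives, for every $\varepsilon,\tau>0$, a controlled $(\varepsilon,\tau)$-chain between any two points of $E$; the alternative description $E=\{e(u,t)+y : u\in\mathcal{U}, t\in\mathbb{R}, y\in L^0\}$ comes from identifying $\overline{\mathbf{C}(0)}\cap L^+$ and $\overline{\mathbf{R}(0)}\cap L^-$ with the closure of the union over $u$ of the hyperbolic components of $e(u,0)$, which is standard once one knows $e(u,0)$ ranges over a set whose closure is $(\overline{\mathbf{C}(0)}\cap L^+)\oplus(\overline{\mathbf{R}(0)}\cap L^-)$. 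For (b), maximality: any point outside $E$ lies, in its hyperbolic component, outside the bounded set $(\overline{\mathbf{C}(0)}\cap L^+)\oplus(\overline{\mathbf{R}(0)}\cap L^-)$; using the hyperbolic estimates one shows that no $(\varepsilon,\tau)$-chain for small $\varepsilon$ can connect such a point back to $E$ — either the unstable component grows without bound along any chain (jumps of size $<\varepsilon$ cannot compensate exponential expansion once you are outside the absorbing region), or the stable component cannot be reached. This non-return argument is the crux.

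\textbf{Main obstacle.} I expect the hardest part to be the maximality argument in (ii) — more precisely, the quantitative claim that chains cannot leave (or enter) a neighbourhood of the bounded hyperbolic core. One must show that although individual jumps are small, their cumulative effect over a chain still cannot overcome hyperbolic expansion/contraction; this requires a careful uniform estimate showing the existence of a bounded absorbing/isolating neighbourhood for the hyperbolic part that is forward invariant up to the jump size, in the spirit of the standard "no chains escape an isolating neighbourhood of a hyperbolic set" lemma, but here adapted to the control setting with the curve $e(u,\cdot)$ playing the role of the reference trajectory and uniformity over $u\in\mathcal{U}$ (compact) being essential. The corresponding statement for $D_0$ (the boundedness of $\mathbf{C}(0)\cap L^+$ and $\overline{\mathbf{R}(0)}\cap L^-$, and the fact that nothing outside can be steered back) is the analogous but easier obstacle, already essentially handled by \cite{ColSan11}.
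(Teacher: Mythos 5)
The paper's proof of this theorem is essentially a pointer: part (i) is quoted verbatim from \cite[Corollary 2.17]{ColSV24}, the first two equalities in (ii) from \cite[Theorem 4.8]{ColSV24}, and the last equality follows from \cite[Lemma 2.18(iii)]{ColSV24}, which identifies $(\overline{\mathbf{C}(0)}\cap L^{+})\oplus(\overline{\mathbf{R}(0)}\cap L^{-})$ as the (bounded) chain control set of the hyperbolic subsystem (\ref{hyp}) and therefore as the set $\{e(u,t)\}$. Your sketch is instead a first-principles outline of how one might prove these underlying results, and at that level the global shape of your argument matches the intuition behind the cited theorems: hyperbolic part pinned near the bounded curve $e(u,\cdot)$, center part controllable/chain-transitive via accumulated small jumps, maximality via a hyperbolic non-escape estimate.

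There are, however, two places where the sketch as written would not hold up. First, the opening claim that "the control system decouples" over $L^{+}\oplus L^{0}\oplus L^{-}$ is incorrect: the three projected subsystems share the same control $u\in\mathcal{U}$, so you cannot choose controls independently for each summand. The fact that $D_{0}$ nevertheless has the direct-sum form is a genuine theorem, not a consequence of decoupling, and a proof must show how to manufacture a single control that achieves prescribed behaviour simultaneously in all three components (this is part of what \cite[Corollary 2.17]{ColSV24} does). Second, the maximality of the chain control set — which you correctly identify as the crux — is only named, not proved: one must produce a bounded isolating neighbourhood of the hyperbolic core that chains with arbitrarily small jumps cannot leave, uniformly over the compact $\mathcal{U}$. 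Without that uniform estimate (supplied in \cite{ColSV24} via Lemma 2.18 and the surrounding machinery), part (ii) of your argument is still open. So the route you propose is plausible and aligned with what the cited paper must be doing internally, but the paper sidesteps all of this work by referencing it, and your version currently leaves exactly the hard quantitative steps as assertions.
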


\begin{proof}
Assertion (i) is Colonius, Santana, and Viscovini \cite[Corollary
2.17]{ColSV24}. The first two equalities in (ii) hold by \cite[Theorem
4.8]{ColSV24}. Finally, \cite[Lemma 2.18(iii)]{ColSV24} shows that
$(\overline{\mathbf{C}(0)}\cap L^{+})\oplus(\overline{\mathbf{R}(0)}\cap
L^{-})$ is the chain control set of system (\ref{hyp}) and hence bounded. Thus
it consists of the points $e(u,t)$ with $t\in\mathbb{R}$ and $u\in\mathcal{U}$.
\end{proof}

\section{\textbf{Selgrade decomposition for the lifted control
flow\label{Section3}}}

In this section, we embed linear control systems of the form (\ref{linear})
into bilinear control systems on $\mathbb{R}^{n}\times\mathbb{R}$, which can
be projected to the Poincar\'{e} sphere $\mathbb{S}^{n}\subset\mathbb{R}%
^{n+1}$. The associated control flow $\Phi^{1}$ is a linear flow on the vector
bundle $\mathcal{U}\times\mathbb{R}^{n+1}$. It admits a Selgrade decomposition
into invariant subbundles and we determine the exponential growth rates of
solutions. The subbundles yield the chain control sets of the projected
control system on $\mathbb{S}^{n}$, which are the possible limit sets for time
tending to infinity.

Recall from Colonius, Santana, and Viscovini \cite{ColSV24} the following
construction. Linear control systems of the form (\ref{linear}) on
$\mathbb{R}^{n}$ can be lifted to bilinear control systems with states
$(x(t),x_{n+1}(t))$ in $\mathbb{R}^{n}\times\mathbb{R}=\mathbb{R}^{n+1}$ by%
\begin{equation}
\dot{x}(t)=Ax(t)+x_{n+1}(t)Bu(t),~\dot{x}_{n+1}(t)=0,\text{ }u\in\mathcal{U}.
\label{lift1}%
\end{equation}
The solutions for initial condition $\left(  x(0),x_{n+1}(0)\right)  =\left(
x_{0},r\right)  \in\mathbb{R}^{n}\times\mathbb{R}$, may be written as
\begin{equation}
\varphi^{1}(t,x_{0},r,u)=\left(  e^{At}x_{0}+r\int_{0}^{t}e^{A(t-\sigma
)}Bu(\sigma)d\sigma,r\right)  ,\,t\in\mathbb{R}. \label{phi_1}%
\end{equation}
Observe that, for $r=0$, one has $\varphi^{1}(t,x,0,u)=\left(  \psi
(t,x),0\right)  $ and, for $r=1$, one has $\varphi^{1}(t,x,1,u)=(\varphi
(t,x,u),1)$ for all $t\in\mathbb{R},x\in\mathbb{R}^{n},u\in\mathcal{U}$.
Hence, on the hyperplane $\mathbb{R}^{n}\times\{0\}$ one obtains a copy of the
differential equation $\dot{x}=Ax$ and on the affine hyperplane $\mathbb{R}%
^{n}\times\{1\}$ one obtains a copy of control system (\ref{linear}).

Control system (\ref{lift1}) is a bilinear control system on $\mathbb{R}%
^{n+1}$ which, with $b_{i},i=1,\ldots,m$, denoting the $i$-th column of $B$,
may be written as%
\begin{equation}
\left(
\begin{array}
[c]{c}%
\dot{x}(t)\\
\dot{x}_{n+1}(t)
\end{array}
\right)  =\left[  \left(
\begin{array}
[c]{cc}%
A & 0\\
0 & 0
\end{array}
\right)  +\sum_{i=1}^{m}u_{i}(t)\left(
\begin{array}
[c]{cc}%
0 & b_{i}\\
0 & 0
\end{array}
\right)  \right]  \left(
\begin{array}
[c]{c}%
x(t)\\
x_{n+1}(t)
\end{array}
\right)  . \label{lift2}%
\end{equation}
Define subsets of projective space $\mathbb{P}^{n}$ and of the unit sphere
$\mathbb{S}^{n}$ by%
\begin{align}
\mathbb{P}^{n,0}  &  =\{\mathbb{P}(x,0)\left\vert x\in\mathbb{R}^{n}\right.
\},\,\mathbb{P}^{n,1}=\{\mathbb{P}(x,r)\left\vert x\in\mathbb{R}^{n}%
,r\not =0\right.  \},\label{4.2}\\
\mathbb{S}^{n,+}  &  :=\left\{  (x,r)\in\mathbb{S}^{n}\left\vert
x\in\mathbb{R}^{n},r>0\right.  \right\}  ,\,\mathbb{S}^{n,0}=\{(x,0)\in
\mathbb{S}^{n}\left\vert x\in\mathbb{R}^{n}\right.  \},\nonumber
\end{align}
respectively. Define the map%
\begin{equation}
h^{1}:\mathbb{R}^{n}\rightarrow\mathbb{S}^{n,+},h^{1}(x)=\frac{(x,1)}%
{\left\Vert (x,1)\right\Vert },x\in\mathbb{R}^{n}. \label{h1}%
\end{equation}
Since the northern hemisphere $\mathbb{S}^{n,+}$ can be identified with
$\mathbb{P}^{n,1}$ via $\frac{(x,1)}{\left\Vert (x,1)\right\Vert }%
\sim\mathbb{P}(x,1)$, we also denote the map $x\mapsto\mathbb{P}%
(x,1):\mathbb{R}^{n}\rightarrow\mathbb{P}^{n,1}$ by $h^{1}$. It will be clear
from the context what is meant. Furthermore, we call $\mathbb{P}^{n,0}$ the
projective equator since it is the projection of the equator $\mathbb{S}%
^{n,0}$.

Control system (\ref{lift1}) induces a control flow $\Phi^{1}$ on
$\mathcal{U}\times\mathbb{R}^{n+1}$ defined by%
\begin{equation}
\Phi_{t}^{1}(u,x,r)=(u(t+\cdot),\varphi^{1}(t,x,r,u)),t\in\mathbb{R}%
,(x,r)\mathbb{\in R}^{n+1},\,u\in\mathcal{U}. \label{Fi_1}%
\end{equation}
The maps between the fibers $\{u\}\times\mathbb{R}^{n+1}$ are linear, hence
$\Phi^{1}$ is a linear flow. The projection of system (\ref{lift1}) to
projective space yields a projective control flow $\mathbb{P}\Phi^{1}$ on the
projective Poincar\'{e} bundle $\mathcal{U}\times\mathbb{P}^{n}$. The subsets
$\mathcal{U}\times\mathbb{P}^{n,0}$ and $\mathcal{U}\times\mathbb{P}^{n,1}$
are invariant under the flow $\mathbb{P}\Phi^{1}$. The following proposition
(cf. \cite[Proposition 4.2]{ColSV24}) shows some properties of the flows on
the projective Poincar\'{e} bundle.

\begin{proposition}
\label{Proposition_e}(i) The projectivized flow $\mathbb{P}\psi(t,p),p\in
\mathbb{P}^{n-1}$, of the uncontrolled system $\dot{x}=Ax$ and the flow
$\mathbb{P}\varphi^{1}(t,p,0),p\in\mathbb{P}^{n}$ restricted to the projective
equator $\mathbb{P}^{n,0}\subset\mathbb{P}^{n}$ are conjugate by the
analytical isometry $e_{\mathbb{P}}(p)=\mathbb{P}(x,0)$ for $p=\mathbb{P}%
x\in\mathbb{P}^{n-1}$.

(ii) The map%
\begin{equation}
\left(  \mathrm{id}_{\mathcal{U}},h^{1}\right)  :\mathcal{U}\times
\mathbb{R}^{n}\rightarrow\mathcal{U}\times\mathbb{P}^{n,1},(u,x)\mapsto
(u,\mathbb{P}(x,1)), \label{U_h1}%
\end{equation}
is a conjugacy of the flows $\Phi$ on $\mathcal{U}\times\mathbb{R}^{n}$ and
$\mathbb{P}\Phi^{1}$ restricted to $\mathcal{U}\times\mathbb{P}^{n,1}$.
\end{proposition}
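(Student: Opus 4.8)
The plan for both parts is the same: check that the map in question is well defined and a homeomorphism onto the indicated invariant set, and then verify flow-equivariance as a short computation from the explicit solution formula $(\ref{phi_1})$. The only genuinely substantive point is the word ``isometry'' in~(i); everything else is bookkeeping.

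\emph{Part (i).} Since $\dot x_{n+1}=0$ in $(\ref{lift1})$, the hyperplane $\mathbb{R}^{n}\times\{0\}$ is invariant and there $\varphi^{1}(t,x,0,u)=(\mathrm{e}^{At}x,0)$ independently of $u$; hence $\mathbb{P}^{n,0}$ is invariant under $\mathbb{P}\Phi^{1}$ and the induced flow on it is $\mathbb{P}(x,0)\mapsto\mathbb{P}(\mathrm{e}^{At}x,0)$. First I would note that $e_{\mathbb{P}}(\mathbb{P}x)=\mathbb{P}(x,0)$ is well defined and bijective onto $\mathbb{P}^{n,0}$, because $\mathbb{P}x=\mathbb{P}x'$ in $\mathbb{P}^{n-1}$ is equivalent to $\mathbb{P}(x,0)=\mathbb{P}(x',0)$ in $\mathbb{P}^{n}$. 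The conjugacy is then immediate: $e_{\mathbb{P}}(\mathbb{P}\psi(t,\mathbb{P}x))=e_{\mathbb{P}}(\mathbb{P}(\mathrm{e}^{At}x))=\mathbb{P}(\mathrm{e}^{At}x,0)$, which is the image of $\mathbb{P}(x,0)=e_{\mathbb{P}}(\mathbb{P}x)$ under the induced flow. For the metric statement I would fix on $\mathbb{P}^{n-1}$ and $\mathbb{P}^{n}$ the Riemannian metrics obtained from the round metrics on $\mathbb{S}^{n-1}$ and $\mathbb{S}^{n}$ via the antipodal quotients; the embedding $x\mapsto(x,0)$ sends $\mathbb{S}^{n-1}$ isometrically onto the great subsphere $\mathbb{S}^{n,0}$ and intertwines the antipodal maps, so it descends to an isometry $\mathbb{P}^{n-1}\to\mathbb{P}^{n,0}$, which is exactly $e_{\mathbb{P}}$; analyticity is clear since in affine charts $e_{\mathbb{P}}$ is given by rational expressions (equivalently, by $x\mapsto(x,0)/\Vert(x,0)\Vert$ on the sphere models).

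\emph{Part (ii).} The map $h^{1}:x\mapsto\mathbb{P}(x,1)$ is well defined, injective (from $\mathbb{P}(x,1)=\mathbb{P}(x',1)$ one gets $(x,1)=c(x',1)$, hence $c=1$ and $x=x'$) and surjective onto $\mathbb{P}^{n,1}$, with inverse $\mathbb{P}(y,r)\mapsto y/r$, which makes sense precisely because $r\neq 0$ on $\mathbb{P}^{n,1}$; from the representation $h^{1}(x)=(x,1)/\Vert(x,1)\Vert$ both $h^{1}$ and its inverse are analytic, so $(\mathrm{id}_{\mathcal{U}},h^{1})$ is a homeomorphism of $\mathcal{U}\times\mathbb{R}^{n}$ onto $\mathcal{U}\times\mathbb{P}^{n,1}$ (the target being invariant under $\mathbb{P}\Phi^{1}$ again because $\dot x_{n+1}=0$). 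Flow-equivariance is then the computation: applying $(\mathrm{id}_{\mathcal{U}},h^{1})$ to $\Phi_{t}(u,x)=(\theta_{t}u,\varphi(t,x,u))$ gives $(\theta_{t}u,\mathbb{P}(\varphi(t,x,u),1))$, and by the identity $\varphi^{1}(t,x,1,u)=(\varphi(t,x,u),1)$ recorded after $(\ref{phi_1})$ this equals $(\theta_{t}u,\mathbb{P}\varphi^{1}(t,x,1,u))=\mathbb{P}\Phi^{1}_{t}(u,\mathbb{P}(x,1))=\mathbb{P}\Phi^{1}_{t}\big((\mathrm{id}_{\mathcal{U}},h^{1})(u,x)\big)$; so $(\mathrm{id}_{\mathcal{U}},h^{1})$ conjugates $\Phi$ and $\mathbb{P}\Phi^{1}|_{\mathcal{U}\times\mathbb{P}^{n,1}}$.

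The main obstacle, such as it is, is purely expository: being precise about which metrics on $\mathbb{P}^{n-1}$ and $\mathbb{P}^{n}$ are intended in (i) and confirming that the equatorial embedding is isometric and analytic with respect to them. Once that normalization is pinned down, both parts reduce to unwinding the definitions of the flows and of the projections.
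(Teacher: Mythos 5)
Your proof is correct. Note that the paper itself does not prove this proposition but instead cites it from Colonius, Santana, and Viscovini~\cite[Proposition 4.2]{ColSV24}, so there is no in-paper argument to compare against; what you supply is a direct verification. The verification is sound: the identities $\varphi^{1}(t,x,0,u)=(\mathrm{e}^{At}x,0)$ and $\varphi^{1}(t,x,1,u)=(\varphi(t,x,u),1)$, which the paper records immediately after $(\ref{phi_1})$, are exactly what is needed, and your use of them to establish equivariance of $e_{\mathbb{P}}$ and of $(\mathrm{id}_{\mathcal{U}},h^{1})$ is correct. Your treatment of the ``analytical isometry'' claim via the round metrics on the antipodal quotients and the totally geodesic equatorial embedding $\mathbb{S}^{n-1}\hookrightarrow\mathbb{S}^{n}$ is the natural reading of that phrase and is fine. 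One small point worth making explicit (you do so implicitly): in the statement, $\mathbb{P}\varphi^{1}(t,p,0)$ has $u=0$ as the third argument, and the assertion is well posed on $\mathbb{P}^{n,0}$ precisely because the flow there is independent of the control, as you observe.
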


Next note the following lemma.

\begin{lemma}
\label{Lemma_growth1}For $u\in\mathcal{U}$, $y\in L^{0}$, and $r\in\mathbb{R}%
$, the term%
\begin{equation}
e^{At}(-re(u,0)+y)+r\int_{0}^{t}e^{A(t-\sigma)}Bu(\sigma)d\sigma
,t\in\mathbb{R}, \label{3.7}%
\end{equation}
has at most polynomial growth in $t$ and hence the exponential growth rate is%
\[
\lim_{t\rightarrow\pm\infty}\frac{1}{t}\log\left\Vert e^{At}(-re(u,0)+y)+r\int
_{0}^{t}e^{A(t-\sigma)}Bu(\sigma)d\sigma\right\Vert =0.
\]

\end{lemma}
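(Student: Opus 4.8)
The plan is to split the expression (\ref{3.7}) into two pieces whose growth can be controlled separately: the contribution of the $L^0$-direction, and the contribution of the hyperbolic directions. First I would recall from the text preceding the lemma that $e(u,t)$ is the \emph{unique bounded} solution on the hyperbolic subspace $L^+\oplus L^-$ of (\ref{hyp}), so that $e(u,t)=e^{A\pi^h t}e(u,0)+\int_0^t e^{A\pi^h(t-\sigma)}\pi^h Bu(\sigma)\,d\sigma$ and $\sup_{t\in\mathbb{R}}\lVert e(u,t)\rVert<\infty$ (in fact the bound is uniform in $u\in\mathcal{U}$, since $U$ is compact). Applying the projection $\pi^h$ to the term in (\ref{3.7}), and using that $\pi^h y=0$ because $y\in L^0$, the hyperbolic part becomes exactly $-r\,e(u,t)$, which is bounded (hence has zero exponential growth rate). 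Applying the complementary projection $\pi^0$, the hyperbolic forcing disappears and one is left with $e^{A t}\!\restriction_{L^0}$ acting on $-r\,\pi^0 e(u,0)+y$ plus $r\int_0^t e^{A(t-\sigma)}\pi^0 Bu(\sigma)\,d\sigma$.

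Next I would estimate the $L^0$-part. On $L^0=L(0)$ the matrix $A$ has all eigenvalues with zero real part, so $\lVert e^{A t}\!\restriction_{L^0}\rVert$ grows at most polynomially in $|t|$ (the standard Jordan-form estimate: $\lVert e^{At}\restriction_{L^0}\rVert \le C(1+|t|)^{n-1}$). Hence $\lVert e^{A t}(-r\,\pi^0 e(u,0)+y)\rVert$ is at most polynomial, and the convolution integral $\bigl\lVert r\int_0^t e^{A(t-\sigma)}\pi^0 Bu(\sigma)\,d\sigma\bigr\rVert \le |r|\,\lVert\pi^0 B\rVert\,\sup_{v\in U}\lVert v\rVert\int_0^{|t|} C(1+\sigma)^{n-1}\,d\sigma$ is at most polynomial of degree $n$ in $|t|$. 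Adding the (bounded) hyperbolic contribution, the whole expression (\ref{3.7}) is bounded by a polynomial in $|t|$, which is exactly the first assertion.

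Finally, the exponential growth rate follows immediately: if $g(t)$ denotes the expression in (\ref{3.7}) and $\lVert g(t)\rVert\le P(|t|)$ for a polynomial $P$, then $\tfrac1t\log\lVert g(t)\rVert \le \tfrac1t\log P(|t|)\to 0$ as $t\to\pm\infty$, while the lower bound $\liminf\tfrac1t\log\lVert g(t)\rVert\ge 0$ is automatic whenever the limit exists in the Lyapunov sense (and in any case the two-sided statement is read as $\limsup_{t\to+\infty}$ and $\limsup_{t\to-\infty}$ both being $\le 0$, with the matching lower bounds coming from the fact that $e^{At}$ restricted to $L^0$ has Lyapunov exponents equal to $0$). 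I expect the only genuinely delicate point to be the bookkeeping that makes the hyperbolic part collapse to $-r\,e(u,t)$ — i.e.\ verifying that $\pi^h\bigl(e^{At}(-re(u,0)+y)+r\int_0^t e^{A(t-\sigma)}Bu(\sigma)\,d\sigma\bigr)=-r\,e(u,t)$ using $\pi^h y=0$, $\pi^h$ commuting with $e^{At}$, and the integral representation of $e(u,\cdot)$; everything after that is the routine polynomial-growth estimate on a center subspace.
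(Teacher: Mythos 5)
Your decomposition of (\ref{3.7}) via the spectral projections $\pi^h$ and $\pi^0$ is exactly the paper's approach, and your polynomial estimate on the center component $\left.e^{At}\right\vert_{L^0}$ is the right way to handle that piece. The problem is at the step you yourself flag as delicate: the claim that the $\pi^h$-component collapses to $-r\,e(u,t)$. Computing directly,
\[
\pi^h\Bigl(e^{At}(-r\,e(u,0)+y)+r\int_0^t e^{A(t-\sigma)}Bu(\sigma)\,d\sigma\Bigr)
= -r\,e^{A\pi^h t}e(u,0) + r\int_0^t e^{A(t-\sigma)}\pi^h Bu(\sigma)\,d\sigma,
\]
while the variation-of-constants formula (stated correctly at the start of the paper's proof) gives $e(u,t)= e^{A\pi^h t}e(u,0) + \int_0^t e^{A(t-\sigma)}\pi^h Bu(\sigma)\,d\sigma$, so $-r\,e(u,t)$ carries the \emph{opposite} sign on the integral. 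The difference is $2r\int_0^t e^{A(t-\sigma)}\pi^h Bu(\sigma)\,d\sigma$, which is unbounded in general, and indeed the lemma as literally written fails: take $n=1$, $A=1$, $B=1$, $u\equiv 1$, $r=1$, $y=0$; then $e(u,0)=-1$ and expression (\ref{3.7}) equals $e^t\cdot 1 + (e^t-1)=2e^t-1$, which grows exponentially.

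This is not an error you introduced independently: the paper's own proof equates $-e^{A\pi^h t}e(u,0)+\int_0^t e^{A(t-\sigma)}\pi^h Bu(\sigma)\,d\sigma$ with $-e(u,t)$, so the sign slip propagates from (\ref{3.7}) and from the definition (\ref{central}) of $\mathcal{V}_c$. The fix is to replace $-r\,e(u,0)$ by $+r\,e(u,0)$ in both; then the $\pi^h$-component is exactly $r\,e(u,t)$, which is bounded, and the remainder of your argument (polynomial bound $\left\Vert\left.e^{At}\right\vert_{L^0}\right\Vert\le C(1+|t|)^{n-1}$ and the convolution estimate, then $\frac{1}{t}\log$ of a polynomial tends to $0$) goes through unchanged. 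The lesson is that the collapse to $\pm r\,e(u,t)$ is not deferrable bookkeeping --- it is precisely the place where the intended sign is forced, and actually carrying out the verification would have surfaced the discrepancy.
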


\begin{proof}
Since $e(u,t),t\in\mathbb{R}$, is a solution of (\ref{hyp}) it satisfies
\[
e(u,t)=e^{A\pi^{h}t}e(u,0)+\int_{0}^{t}e^{A(t-\sigma)}\pi^{h}Bu(\sigma
)d\sigma\in L^{+}\oplus L^{-}.
\]
In (\ref{3.7}), we may suppose that the point in $L^{0}$ has the form $ry$
instead of $y$ and compute
\begin{align*}
&  e^{At}(-e(u,0)+y)+\int_{0}^{t}e^{A(t-\sigma)}Bu(\sigma)d\sigma\\
&  =-e^{A\pi^{h}t}e(u,0)+e^{A\pi^{0}t}y+\int_{0}^{t}e^{A(t-\sigma)}\pi
^{h}Bu(\sigma)d\sigma+\int_{0}^{t}e^{A(t-\sigma)}\pi^{0}Bu(\sigma)d\sigma\\
&  =-e(u,t)+e^{A\pi^{0}t}y+\int_{0}^{t}e^{A\pi^{0}\sigma}Bu(t-\sigma)d\sigma.
\end{align*}
Here $e(u,t)$ is bounded and $e^{A\pi^{0}t}$ has at most polynomial growth,
and hence also $\int_{0}^{t}e^{A\pi^{0}\sigma}Bu(t-\sigma)d\sigma$ has at most
polynomial growth. This implies the claim.
\end{proof}

The following theorem determines the Selgrade decomposition of the associated
linear control flow on $\mathcal{U}\times\mathbb{R}^{n+1}$. Denote
$L(\lambda_{i})^{\infty}:=L(\lambda_{i})\times\{0\}\subset\mathbb{R}^{n+1}$.

\begin{theorem}
\label{Theorem_Selg_lift}Consider the linear control flow $\Phi^{1}$ on
$\mathcal{U}\times\mathbb{R}^{n+1}$ associated to the lift (\ref{lift2}) of a
linear control system of the form (\ref{linear}).

(i) Then $\Phi^{1}$ has the Selgrade decomposition%
\begin{equation}
\mathcal{U}\times\mathbb{R}^{n+1}=\bigoplus_{\lambda_{i}>0}\left(
\mathcal{U}\times L(\lambda_{i})^{\infty}\right)  \oplus\mathcal{V}_{c}%
\oplus\bigoplus_{\lambda_{i}<0}\left(  \mathcal{U}\times L(\lambda
_{i})^{\infty}\right)  , \label{Selg_lift}%
\end{equation}
where $\mathcal{V}_{c}$ is the central Selgrade bundle%
\begin{equation}
\mathcal{V}_{c}=\left\{  (u,-re(u,0)+y,r)\left\vert u\in\mathcal{U},y\in
L^{0},r\in\mathbb{R}\right.  \right\}  . \label{central}%
\end{equation}

(ii) The dimensions of the subbundles of $\mathcal{U}\times\mathbb{R}^{n+1}$
are $\dim(\mathcal{U}\times L(\lambda_{i})^{\infty})=\dim L(\lambda_{i})$ for
all $i$ and $\dim\mathcal{V}_{c}=1+\dim L^{0}$.

(iii) For all $\lambda_{i}\not =0$ and every $(u,x,0)\in\mathcal{U}\times
L(\lambda_{i})^{\infty}$ the Lyapunov exponent is $\lambda_{i}$ and for every
$(u,x,r)\in\mathcal{V}_{c}$ the Lyapunov exponent is $0$.
\end{theorem}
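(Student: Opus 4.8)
The plan is to verify the three claims in turn, starting from the explicit solution formula \eqref{phi_1} and the structural facts about the bounded solution $e(u,t)$ that were recalled in Section~\ref{Section2}.

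\textbf{Step 1: identifying candidate subbundles.} First I would check that each of the sets listed in \eqref{Selg_lift} is indeed a $\Phi^{1}$-invariant subbundle whose fibers are linear subspaces of $\mathbb{R}^{n+1}$. For the sets $\mathcal{U}\times L(\lambda_{i})^{\infty}$ this is immediate from \eqref{phi_1} with $r=0$: on $\mathbb{R}^{n}\times\{0\}$ the flow is just $(\mathbb{P}\psi$ lifted, i.e. $\psi$ paired with the shift$)$, and the Lyapunov spaces $L(\lambda_{i})$ are $\psi$-invariant. For $\mathcal{V}_{c}$, I would use that $e(u,t)=e(\theta_{t}u,0)$ together with the variation-of-constants computation already carried out in the proof of Lemma~\ref{Lemma_growth1}: applying $\varphi^{1}(t,\cdot,\cdot,u)$ to a point $(-re(u,0)+y,r)$ gives, in the first coordinate, $e^{At}(-re(u,0)+y)+r\int_{0}^{t}e^{A(t-\sigma)}Bu(\sigma)d\sigma$, which by that computation equals $-re(\theta_{t}u,0)+e^{A\pi^{0}t}y+r\int_0^t e^{A\pi^0\sigma}Bu(t-\sigma)d\sigma$; since $L^{0}$ is $e^{A\pi^{0}t}$-invariant and $\pi^0 B u(t-\sigma)\in L^0$, the new $L^0$-component $y'$ satisfies the defining relation, so $\mathcal{V}_{c}$ is invariant. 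Linearity of the fibers of $\mathcal{V}_{c}$ in $(y,r)$ is clear from \eqref{central}. Then \eqref{Lyap1} plus the definition of $L^{0}$ shows these subspaces are complementary in $\mathbb{R}^{n+1}$: the first-coordinate map $(-re(u,0)+y,r)\mapsto$ reconstructs $L^{0}\oplus\mathbb{R}$, which together with $L^{+}\oplus L^{-}$ (spanned inside $\mathbb{R}^n\times\{0\}$) gives all of $\mathbb{R}^{n+1}$, and a dimension count gives directness.

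\textbf{Step 2: exponential growth rates.} This is assertion (iii), and it is also what forces the decomposition to be the Selgrade decomposition. On $\mathcal{U}\times L(\lambda_{i})^{\infty}$ with $\lambda_i\ne 0$ the flow restricts to the lifted $\psi$ on $L(\lambda_i)$, so the Lyapunov exponent is exactly $\lambda_i$ by the characterization of Lyapunov spaces recalled before Theorem~\ref{Theorem_cs}; moreover all finite-time exponential estimates are uniform in $u$ (there is no $u$-dependence here at all). On $\mathcal{V}_{c}$, the growth rate $0$ is precisely Lemma~\ref{Lemma_growth1} applied to the vector $(-re(u,0)+y,r)$ — note the last coordinate is constant $=r$, contributing nothing to growth. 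So the flow $\Phi^{1}$ restricted to the direct sum \eqref{Selg_lift} has the finest Morse-type spectral gaps dictated by $\{\lambda_i>0\}\cup\{0\}\cup\{\lambda_i<0\}$.

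\textbf{Step 3: it is the Selgrade decomposition.} To conclude, I would invoke Selgrade's theorem (as recalled in the introduction, e.g. \cite[Theorem 9.2.5]{ColK14} or \cite{ColK00}): since $\mathcal{U}$ is chain transitive under the shift $\theta$ (it is the base of a control flow with $U$ a neighborhood of $0$; cf.\ Kawan \cite{Kawan13}), the linear flow $\Phi^{1}$ on $\mathcal{U}\times\mathbb{R}^{n+1}$ admits a unique finest decomposition into invariant subbundles whose projectivizations are the maximal chain transitive sets of $\mathbb{P}\Phi^{1}$, and these subbundles are ordered by exponential growth. Because the decomposition produced in Steps~1--2 consists of invariant subbundles, exhausts $\mathbb{R}^{n+1}$, and exhibits distinct constant Lyapunov exponents $\lambda_1>\cdots>0>\cdots$ with no further splitting possible inside each block (each $L(\lambda_i)$ already projects to a single chain transitive set by Theorem~\ref{thmprojMorse}, and $\mathcal{V}_c$ has pure exponent $0$ hence its projectivization is chain transitive — this last point I would reduce to the fact that the control set $D_0$ is contained in $E=\overline{D_0}+L^0$ via Theorem~\ref{Theorem_cs} together with Proposition~\ref{Proposition_e}(ii) identifying $\mathcal{U}\times\mathbb{R}^n$ with $\mathcal{U}\times\mathbb{P}^{n,1}$), uniqueness of the Selgrade decomposition forces it to coincide with \eqref{Selg_lift}. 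Assertion (ii) on dimensions is then just bookkeeping: $\dim(\mathcal{U}\times L(\lambda_i)^\infty)=\dim L(\lambda_i)$, and $\mathcal{V}_c$ is parametrized linearly and injectively by $(y,r)\in L^0\times\mathbb{R}$, so $\dim\mathcal{V}_c=1+\dim L^0$; the sum of dimensions is $\sum_i\dim L(\lambda_i)+1=n+1$, confirming directness.

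The main obstacle I anticipate is Step~3, specifically showing that $\mathcal{V}_{c}$ does not split further, i.e.\ that its projectivization is a single chain transitive set for $\mathbb{P}\Phi^{1}$ rather than merely having constant Lyapunov exponent $0$ (constant exponent alone does not preclude splitting in general). The clean way around this is to connect $\mathcal{V}_{c}$ with the unique chain control set $E$ of the original system on the affine chart $\mathbb{P}^{n,1}$ via Proposition~\ref{Proposition_e}(ii) and Theorem~\ref{Theorem_cs}(ii) — the part of $\mathbb{P}\mathcal{V}_c$ in $\mathbb{P}^{n,1}$ corresponds under $h^1$ to $\{e(u,t)+y : u\in\mathcal{U}, t\in\mathbb{R}, y\in L^0\}=E$, which is chain transitive as a chain control set — and then note that the part of $\mathbb{P}\mathcal{V}_c$ on the equator $\mathbb{P}^{n,0}$ is forced into the same chain transitive component by continuity of the flow and the fact that trajectories with $r\ne 0$ and $\|x\|\to\infty$ limit onto the equator (the geometric content of the Poincaré compactification). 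Everything else is routine linear algebra and bookkeeping with the variation-of-constants formula already spelled out in Lemma~\ref{Lemma_growth1}.
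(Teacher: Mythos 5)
The paper's own proof of this theorem is essentially a pair of citations: assertions (i) and (ii) are taken verbatim from \cite[Theorems 4.3 and 5.3]{ColSV24}, and only (iii) is argued directly, via Lemma \ref{Lemma_growth1}. Your proposal instead reconstructs (i) and (ii) from first principles (invariance of each candidate subbundle, a dimension count, and Selgrade's uniqueness), which is a genuinely different and more instructive route. Your treatment of (iii) coincides with the paper's. What the paper's route buys is brevity; what yours buys is transparency about \emph{why} $\mathcal{V}_{c}$ is indecomposable, which the paper delegates entirely to the cited reference.

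You correctly identify the crux: invariance plus a constant Lyapunov exponent $0$ on $\mathcal{V}_{c}$ does not by itself force $\mathbb{P}\mathcal{V}_{c}$ to be a single maximal chain transitive set, so one must supply that separately. Your sketch of this step is in the right spirit but leaves two gaps that would have to be closed to make Step 3 rigorous. First, the passage from chain transitivity of the lift $\mathcal{E}\subset\mathcal{U}\times\mathbb{R}^{n}$ of $E$ to chain transitivity of $(\mathrm{id}_{\mathcal{U}},h^{1})(\mathcal{E})\subset\mathcal{U}\times\mathbb{P}^{n,1}$ is not a purely topological statement: chain transitivity depends on the metric, and $h^{1}$ is a homeomorphism onto a non-closed subset, not a uniform equivalence of metric spaces. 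It does go through, but one must observe that $h^{1}$ is globally Lipschitz (it contracts distances as $\Vert x\Vert\to\infty$), so $(\varepsilon,\tau)$-chains in $\mathbb{R}^{n}$ push forward to $(C\varepsilon,\tau)$-chains in $\mathbb{P}^{n}$; this should be said explicitly. Second, "forced into the same chain transitive component by continuity" is doing real work. The clean statement is: the closure of a chain transitive set on a compact metric space is chain transitive, and the closure of the $\mathbb{P}^{n,1}$ part of $\mathbb{P}\mathcal{V}_{c}$ equals all of $\mathbb{P}\mathcal{V}_{c}$ (the equatorial piece $\mathcal{U}\times\mathbb{P}(L^{0}\times\{0\})$ and nothing more is picked up in the closure, because the hyperbolic part $e(u,0)$ stays bounded while only the $L^{0}$-part escapes to infinity). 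Both of these are fixable, so your route is viable; but as written Step 3 is a plausible plan rather than a proof. One further small remark, independent of your logic: if $e(u,\cdot)$ is the bounded solution of $\dot y=A\pi^{h}y+\pi^{h}Bu$, then the invariance computation actually singles out $\{(u,\,re(u,0)+y,\,r)\}$ rather than $\{(u,\,-re(u,0)+y,\,r)\}$; you inherited the sign from \eqref{central}, so this is not a defect of your argument, but it is worth double-checking against the source when filling in Step 1.
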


\begin{proof}
(i), (ii) By Colonius, Santana, and Viscovini \cite[Theorem 4.3]{ColSV24}),
the lifted flow $\Phi^{1}$ has a Selgrade decomposition of the form%
\begin{align*}
\mathcal{U}\times\mathbb{R}^{n+1}  &  =\left(  \mathcal{U}\times L(\lambda
_{1})^{\infty}\right)  \oplus\cdots\oplus\left(  \mathcal{U}\times
L(\lambda_{\ell^{+}})^{\infty}\right)  \oplus\mathcal{V}_{c}\oplus\\
&  \qquad\qquad\oplus\left(  \mathcal{U}\times L(\lambda_{\ell^{+}+\ell^{0}%
+1})^{\infty}\right)  \oplus\cdots\oplus\left(  \mathcal{U}\times
L(\lambda_{\ell})^{\infty}\right)
\end{align*}
with $\lambda_{\ell^{+}}<0$ and $\lambda_{\ell^{+}+\ell^{0}+1}>0$. The
assertion on the dimension of $\left(  \mathcal{U}\times L(\lambda
_{i})^{\infty}\right)  $ is obvious. By \cite[Theorem 5.3]{ColSV24} the
central Selgrade bundle is given by (\ref{central}) and its dimension is
$1+\dim L^{0}$. It follows that $\ell^{0}=0$ if $0$ is not a Lyapunov exponent
and $\ell^{0}=1$ otherwise. This proves assertions (i) and (ii).

(iii) The assertion for $\mathcal{U}\times L(\lambda_{i})^{\infty}$ is clear.
For $(u,-re(u,0)+y,r)\in\mathcal{V}_{c}$ we obtain%
\begin{align*}
&  \frac{1}{t}\log\left\Vert \varphi^{1}(t,(-re(u,0)+y,r),u\right\Vert \\
&  =\frac{1}{t}\log\left\Vert \left(  e^{At}(-re(u,0)+y)+r\int_{0}%
^{t}e^{A(t-\sigma)}Bu(\sigma)d\sigma,r\right)  \right\Vert .
\end{align*}
Since we can omit the last component the assertion follows by Lemma
\ref{Lemma_growth1}.
\end{proof}

The chain control sets in the projective Poincar\'{e} sphere are determined by
\cite[Corollary 4.4 and Corollary 5.4]{ColSV24} and Theorem \ref{thmprojMorse}%
. Recall from (\ref{h1}) that $h^{1}(x)=\mathbb{P}(x,1)$.

\begin{theorem}
(i) For the induced system on the projective Poincar\'{e} sphere
$\mathbb{P}^{n}$ there is a unique chain control set $_{\mathbb{P}}E_{c}$ with
$_{\mathbb{P}}E_{c}\cap\mathbb{P}^{n,1}\not =\varnothing$. It is given by
$h^{1}(E)$, where $E$ is the unique chain control set of (\ref{linear}).

(iii) The other chain control sets on $\mathbb{P}^{n}$ are contained in the
projective equator $\mathbb{P}^{n,0}$ and are given by $\mathbb{P}%
(L(\lambda_{i})^{\infty})$ for $\lambda_{i}\not =0$.
\end{theorem}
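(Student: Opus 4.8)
The plan is to transfer the Selgrade decomposition of the lifted linear flow $\Phi^1$ on $\mathcal{U}\times\mathbb{R}^{n+1}$ (Theorem~\ref{Theorem_Selg_lift}) to the projective bundle $\mathcal{U}\times\mathbb{P}^{n}$ via the standard correspondence between Selgrade subbundles and maximal chain transitive sets of the projected flow; see e.g. \cite[Theorem 9.2.5]{ColK14}. Concretely, projectivizing the decomposition (\ref{Selg_lift}) produces maximal chain transitive sets $\mathbb{P}\mathcal{V}_c$ and $\mathcal{U}\times\mathbb{P}(L(\lambda_i)^{\infty})$ for $\lambda_i\neq0$ of the flow $\mathbb{P}\Phi^1$, and by Theorem~\ref{Theorem_equivalence} each of these corresponds to a chain control set of the control system induced on $\mathbb{P}^{n}$. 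So the only thing to check is which of these chain control sets meets $\mathbb{P}^{n,1}$ and to identify it explicitly.

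For part (iii): the subbundle $\mathcal{U}\times L(\lambda_i)^{\infty}$ sits inside $\mathcal{U}\times(\mathbb{R}^n\times\{0\})$, whose projectivization lies in $\mathcal{U}\times\mathbb{P}^{n,0}$, which is invariant under $\mathbb{P}\Phi^1$. Hence the associated chain control set is contained in the projective equator $\mathbb{P}^{n,0}$. First I would note that on $\mathbb{R}^n\times\{0\}$ the flow $\varphi^1$ reduces to $\psi(t,x)=e^{At}x$ (see the remark after (\ref{phi_1})), so by Proposition~\ref{Proposition_e}(i) the flow on $\mathbb{P}^{n,0}$ is conjugate to $\mathbb{P}\psi$ on $\mathbb{P}^{n-1}$; then Theorem~\ref{thmprojMorse}(ii) identifies the maximal chain transitive sets there as exactly the $\mathbb{P}L(\lambda_i)$, i.e., the $\mathbb{P}(L(\lambda_i)^{\infty})$. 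Since the $u$-component plays no role (the uncontrolled flow on the equator does not see $u$), these are precisely the chain control sets contained in $\mathbb{P}^{n,0}$; the one with $\lambda_i=0$, if it occurs, is absorbed into $_{\mathbb{P}}E_c$ rather than being separate, which is consistent with $\mathcal{V}_c$ containing $L^0\times\{0\}$.

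For part (i): the central Selgrade bundle $\mathcal{V}_c$ from (\ref{central}) contains points $(u,-re(u,0)+y,r)$ with $r\neq0$, so $\mathbb{P}\mathcal{V}_c$ meets $\mathbb{P}^{n,1}$, and it is the \emph{only} Selgrade subbundle doing so because all the others lie over $\mathbb{P}^{n,0}$. Thus the corresponding chain control set is the unique one intersecting $\mathbb{P}^{n,1}$. To identify it with $h^1(E)$, I would use Proposition~\ref{Proposition_e}(ii): the map $(u,x)\mapsto(u,\mathbb{P}(x,1))$ conjugates the control flow $\Phi$ on $\mathcal{U}\times\mathbb{R}^n$ with $\mathbb{P}\Phi^1$ restricted to $\mathcal{U}\times\mathbb{P}^{n,1}$. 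Under a conjugacy, chain control sets that lie in the invariant set $\mathbb{P}^{n,1}$ correspond to chain control sets of $\Phi$; since $E$ is the unique chain control set of (\ref{linear}) by Theorem~\ref{Theorem_cs}(ii), we get $_{\mathbb{P}}E_c\cap\mathbb{P}^{n,1}=h^1(E)$. Finally one checks $_{\mathbb{P}}E_c=h^1(E)$ rather than being strictly larger: points of $\mathcal{V}_c$ with $r=0$ are of the form $(u,y,0)$ with $y\in L^0$, and these lie in the closure $h^1(\overline{E})$ because, by Theorem~\ref{Theorem_cs}(ii), $E=\{e(u,t)+y:u\in\mathcal{U},t\in\mathbb{R},y\in L^0\}$ and rescaling $e(u,0)+y$ by $r$ and sending $r\to0$ in $\mathbb{P}(re(u,0)+ry,r)=\mathbb{P}(e(u,0)+y,1)$ — wait, more carefully, one observes that $\mathbb{P}\mathcal{V}_c$ is the chain transitive set, it is closed, and its intersection with $\mathbb{P}^{n,1}$ is $h^1(E)$ which is dense in it; this is exactly the content of \cite[Corollary 5.4]{ColSV24} that we may cite.

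The main obstacle I anticipate is the bookkeeping at the ``seam'' $\mathbb{P}^{n,0}\cap\overline{\mathbb{P}^{n,1}}$: making precise that $\mathbb{P}\mathcal{V}_c$ equals $h^1(E)$ (and not a larger set containing extra equator points), and confirming that when $0$ is a Lyapunov exponent the set $L^0\times\{0\}$ is genuinely part of $\mathbb{P}\mathcal{V}_c$ and not a separate chain control set on the equator. Both points are handled by the cited results \cite[Corollary 4.4, Corollary 5.4]{ColSV24} together with Theorem~\ref{thmprojMorse}, so the proof is mostly a matter of assembling these ingredients correctly rather than new estimates.
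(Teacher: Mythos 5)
Your proposal is essentially correct and reconstructs the argument that the paper delegates to citation: the paper's proof of this theorem is literally ``determined by \cite[Corollary 4.4 and Corollary 5.4]{ColSV24} and Theorem \ref{thmprojMorse},'' and what you assemble from the Selgrade decomposition (\ref{Selg_lift}), the conjugacies in Proposition~\ref{Proposition_e}, Theorem~\ref{Theorem_equivalence}, and Theorem~\ref{thmprojMorse}(ii) is the natural way those corollaries would be obtained. Part~(iii) is clean. For part~(i) you correctly identify $\mathbb{P}\mathcal{V}_c$ as the only projectivized Selgrade bundle meeting $\mathbb{P}^{n,1}$ and use Proposition~\ref{Proposition_e}(ii) to identify its intersection with $\mathbb{P}^{n,1}$ with $h^1(E)$.

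You also correctly noticed the one delicate point, but then talked yourself out of it rather than resolving it. When $L^0\neq\{0_n\}$, the set $E$ is unbounded, $h^1(E)\subset\mathbb{P}^{n,1}$ is not closed in $\mathbb{P}^{n}$, and $\mathbb{P}\mathcal{V}_c$ genuinely contains the equator points $\mathbb{P}(L^0\times\{0\})$. A chain control set is closed (it is the image of a compact chain transitive set under the continuous projection to $\mathbb{P}^{n}$), so $_{\mathbb{P}}E_c$ is $\overline{h^1(E)}=h^1(E)\cup\mathbb{P}(L^0\times\{0\})$ rather than $h^1(E)$ itself; these coincide exactly in the hyperbolic case $L^0=\{0_n\}$, where $E$ is compact and, as you note, the $r=0$ slice of $\mathcal{V}_c$ degenerates to the zero section. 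So your ``rescaling'' attempt could not have worked, and deferring to \cite[Corollary 5.4]{ColSV24} is the right move. In short, the proposal is on target; the only substantive gap is that you should state the closure explicitly rather than claiming $_{\mathbb{P}}E_c=h^1(E)$ on the nose for non-hyperbolic $A$, a point on which the theorem statement itself is somewhat loose.
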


Any bilinear control system can be projected to the unit sphere. For system
(\ref{lift2}) on $\mathbb{R}^{n+1}$ abbreviate%
\[
A_{0}:=\left(
\begin{array}
[c]{cc}%
A & 0\\
0 & 0
\end{array}
\right)  \text{ and }A_{i}=\left(
\begin{array}
[c]{cc}%
0 & b_{i}\\
0 & 0
\end{array}
\right)  \text{ for }i=1,\ldots,m.
\]
Then the induced system on $\mathbb{S}^{n}$ is described by%
\begin{equation}
\dot{s}(t)=h_{0}(s(t))+\sum_{i=1}^{m}u_{i}(t)h_{i}(s(t)),u\in\mathcal{U},
\label{S^n}%
\end{equation}
where $h_{i}(s)=\left[  A_{i}-s^{\top}A_{i}s\cdot I_{n}\right]  s$ for
$i=0,1,\ldots,m$.

Denote by $\pi:\mathbb{R}_{0}^{n+1}=\mathbb{R}^{n+1}\setminus\{0\}\rightarrow
\mathbb{S}^{n},y\mapsto y/\left\Vert y\right\Vert $, the canonical projection.
Note that the cocycle $\varphi^{1}$ defined in (\ref{Fi_1}) satisfies%
\begin{equation}
\pi\varphi^{1}(t,\pi(x,r),u)=\frac{\varphi^{1}(t,x,r,u)}{\left\Vert
\varphi^{1}(t,x,r,u)\right\Vert }. \label{Fi_1a}%
\end{equation}
The relations between the chain control sets on projective space and on the
sphere are described by Colonius and Santana \cite[Theorem 8]{ColS24b}. This
yields the following.

\begin{theorem}
\label{Theorem_twoS}Consider the bilinear control system (\ref{lift2}) on
$\mathbb{R}^{n+1}$ and the chain control set $_{\mathbb{P}}E_{c}$ of the
induced control system on $\mathbb{P}^{n}$.

(i) The set $_{\mathbb{S}}E_{c}^{0}:=\{s\in\mathbb{S}^{n}\left\vert
\mathbb{P}s\in\,_{\mathbb{P}}E_{c}\right.  \}$ is the unique chain control set
in $\mathbb{S}^{n}$ which projects onto $_{\mathbb{P}}E_{c}$ if and only if
there is $s_{0}\in\mathbb{S}^{n}$ with $\mathbb{P}s_{0}\in\,_{\mathbb{P}}%
E_{c}$ and $-s_{0}\in\mathbf{R}^{c}(s_{0})$.

(ii) There are two chain control sets $_{\mathbb{S}}E_{c}^{1}=-\,_{\mathbb{S}%
}E_{c}^{2}$ projecting onto $_{\mathbb{P}}E_{c}$ with%
\[
_{\mathbb{S}}E_{c}^{1}\cup\,_{\mathbb{S}}E_{c}^{2}=\{s\in\mathbb{S}%
^{n}\left\vert \mathbb{P}s\in\,_{\mathbb{P}}E_{c}\right.  \},
\]
if and only if for all $s_{0}\in\mathbb{S}^{n}$ with $\mathbb{P}s_{0}%
\in\,_{\mathbb{P}}E_{c}$ it holds that $-s_{0}\not \in \mathbf{R}^{c}(s_{0})$.

(iii) Define for $j\in\{0,1,2\}$ and $u\in\mathcal{U}$%
\[
F^{j}(u):=\left\{  x\in\mathbb{R}^{n+1}\left\vert \mathbb{\pi}\varphi
(t,x,u)\in\,_{\mathbb{S}}E_{c}^{j}\text{ for all }t\in\mathbb{R}\right.
\right\}  .
\]
In case (i) it follows that $F^{0}(u)$ is a linear subspace. In case (ii) it
follows for $j=1,2$ that $F^{j}(u)$ is a convex cone.
\end{theorem}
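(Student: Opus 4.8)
The plan is to obtain parts (i) and (ii) as the specialization of the general dichotomy of Colonius and Santana \cite[Theorem 8]{ColS24b} to the bilinear system (\ref{lift2}), and to prove (iii) by combining that dichotomy with the Selgrade decomposition of $\Phi^{1}$ from Theorem \ref{Theorem_Selg_lift}. Since (\ref{lift2}) is a bilinear control system on $\mathbb{R}^{n+1}$ with compact convex control range and $\pi\colon\mathbb{R}^{n+1}\setminus\{0\}\to\mathbb{S}^{n}$ is the usual two-to-one covering of $\mathbb{P}^{n}$, that reference applies directly: the preimage $\{s\in\mathbb{S}^{n}:\mathbb{P}s\in\,_{\mathbb{P}}E_{c}\}$ is a single chain control set precisely when some $s_{0}$ over $_{\mathbb{P}}E_{c}$ satisfies $-s_{0}\in\mathbf{R}^{c}(s_{0})$ --- and then, by chain transitivity inside a chain control set, every such $s_{0}$ does --- while otherwise it is the disjoint union of two chain control sets interchanged by $s\mapsto-s$. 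For (i) and (ii) it then only remains to record that the two alternatives are mutually exclusive and exhaustive, which is precisely what the stated criteria say.

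For (iii) I would first identify the Selgrade subbundle carrying $_{\mathbb{P}}E_{c}$. By Theorem \ref{Theorem_equivalence} the lift of $_{\mathbb{P}}E_{c}$ to $\mathcal{U}\times\mathbb{P}^{n}$ is a maximal chain transitive set of $\mathbb{P}\Phi^{1}$, and $\mathbb{P}\Phi^{1}$ is the projectivization of the linear flow $\Phi^{1}$; hence, by Selgrade's theorem, this lift equals $\mathbb{P}\mathcal{W}$ for one of the subbundles $\mathcal{W}$ in the decomposition (\ref{Selg_lift}), and since $_{\mathbb{P}}E_{c}$ meets $\mathbb{P}^{n,1}$ it must be $\mathcal{W}=\mathcal{V}_{c}$, the only subbundle there not contained in $\mathcal{U}\times(\mathbb{R}^{n}\times\{0\})$. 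Denote its fibers by $\mathcal{W}(u)\subset\mathbb{R}^{n+1}$; these are linear subspaces of positive dimension. The next step is the two basic inclusions: if $x\neq0$ and $\pi\varphi^{1}(t,x,u)\in\,_{\mathbb{S}}E_{c}^{j}$ for all $t$, then $\mathbb{P}\varphi^{1}(t,x,u)\in\,_{\mathbb{P}}E_{c}$ for all $t$, i.e.\ $(u,\mathbb{P}x)\in\mathbb{P}\mathcal{W}$, hence $x\in\mathcal{W}(u)$, so $F^{j}(u)\setminus\{0\}\subset\mathcal{W}(u)$; conversely, if $x\in\mathcal{W}(u)\setminus\{0\}$ then invariance of $\mathcal{W}$ under $\Phi^{1}$ gives $\varphi^{1}(t,x,u)\in\mathcal{W}(\theta_{t}u)\setminus\{0\}$, hence $\pi\varphi^{1}(t,x,u)\in\{s:\mathbb{P}s\in\,_{\mathbb{P}}E_{c}\}$ for all $t$.

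In case (i) this last set is $_{\mathbb{S}}E_{c}^{0}$, so the two inclusions combine to $F^{0}(u)=\mathcal{W}(u)$, a linear subspace. In case (ii) it is the disjoint union $_{\mathbb{S}}E_{c}^{1}\cup\,_{\mathbb{S}}E_{c}^{2}$ of two nonvoid closed sets (chain control sets are closed, and these two are disjoint by maximality together with the case-(ii) hypothesis). Since $t\mapsto\pi\varphi^{1}(t,x,u)$ is continuous and $\mathbb{R}$ is connected, each such orbit lies entirely in one of $_{\mathbb{S}}E_{c}^{1}$, $_{\mathbb{S}}E_{c}^{2}$, so $x\in F^{1}(u)$ or $x\in F^{2}(u)$; and $\varphi^{1}(t,-x,u)=-\varphi^{1}(t,x,u)$ together with $_{\mathbb{S}}E_{c}^{2}=-\,_{\mathbb{S}}E_{c}^{1}$ gives $F^{2}(u)=-F^{1}(u)$. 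Thus $F^{1}(u)\setminus\{0\}$ and its reflection partition $\mathcal{W}(u)\setminus\{0\}$ into two sets, each relatively closed in $\mathbb{R}^{n+1}\setminus\{0\}$. If $\dim\mathcal{W}(u)\geq2$ the set $\mathcal{W}(u)\setminus\{0\}$ is connected, forcing one of these pieces to be empty, hence both (they are antipodal), contradicting $\mathcal{W}(u)\neq\{0\}$. Therefore $\dim\mathcal{W}(u)=1$; writing $\mathcal{W}(u)=\mathbb{R}v(u)$, the two pieces are $\{t\,v(u):t>0\}$ and $\{-t\,v(u):t>0\}$, so $F^{1}(u)$ and $F^{2}(u)$ are rays through the origin, in particular convex cones.

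The hard part is this case-(ii) argument: one must combine closedness and disjointness of the two chain control sets, invariance of the Selgrade subbundle, and a connectedness argument to force the subbundle carrying $_{\mathbb{P}}E_{c}$ to be one-dimensional --- after which the convex-cone claim is automatic, and, as a by-product, case (ii) can occur only when $0$ is not a Lyapunov exponent of $\dot{x}=Ax$ (so that $\dim\mathcal{V}_{c}=1$). Secondary points requiring care are the convention on whether the origin belongs to $F^{j}(u)$, so that "convex cone" is read consistently, and the fact that every fiber of a Selgrade subbundle is nonzero, which is exactly what makes the connectedness contradiction go through.
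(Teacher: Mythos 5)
Your proposal is correct. For parts (i) and (ii) you cite \cite[Theorem 8]{ColS24b}, which is precisely what the paper does --- indeed, the paper attributes the whole theorem to that reference and gives no in-text proof. For part (iii) you supply a self-contained argument that the paper does not record: you identify the Selgrade subbundle $\mathcal{V}_{c}$ as the unique one whose projectivization is the lift of $_{\mathbb{P}}E_{c}$ (via Theorem \ref{Theorem_equivalence} and the fact that $_{\mathbb{P}}E_{c}$ meets $\mathbb{P}^{n,1}$ while every other Selgrade bundle projects into the equator), then derive $F^{j}(u)\setminus\{0\}\subset\mathcal{V}_{c}(u)$ and $\mathcal{V}_{c}(u)\setminus\{0\}\subset F^{1}(u)\cup F^{2}(u)$ (resp.\ $=F^{0}(u)$) from invariance of the bundle, and finally treat case (ii) by observing that $F^{1}(u)$ and $F^{2}(u)$ are disjoint, antipodal, relatively closed pieces of the punctured fiber, so connectedness forces $\dim\mathcal{V}_{c}(u)=1$ and each $F^{j}(u)$ is a ray. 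This is valid and even yields an instructive by-product not stated in the paper: case (ii) can occur only when $0$ is not a Lyapunov exponent of $A$, i.e.\ $\dim\mathcal{V}_{c}=1$, since otherwise the punctured central fiber is connected. Two minor points of hygiene: chain control sets are automatically pairwise disjoint by maximality alone, so the case-(ii) hypothesis is not needed for disjointness of $_{\mathbb{S}}E_{c}^{1}$ and $_{\mathbb{S}}E_{c}^{2}$; and the paper's phrase ``$F^{0}(u)$ is a linear subspace'' tacitly adjoins the origin (for which $\pi$ is undefined), a convention you correctly flag.
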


Thus there are one or two chain control sets $_{\mathbb{S}}E_{c}^{j},j=0$ or
$j=1,2$, on the Poincar\'{e} sphere $\mathbb{S}^{n}$ projecting onto the
central chain control set $_{\mathbb{P}}E_{c}$ in $\mathbb{P}^{n}$. They are
not subsets of the equator $\mathbb{S}^{n,0}$ and can also be obtained by
\begin{equation}
\mathbb{S}^{n}\cap\left\{  (x,r)\in\mathbb{R}^{n+1}\left\vert \exists
u\in\mathcal{U}:(u,x,r)\in\mathcal{V}_{c}\right.  \right\}  . \label{S_c}%
\end{equation}
Furthermore, the maximal chain transitive sets $\mathbb{P}(L(\lambda
_{i})),\lambda_{i}\not =0$, of the flow $\mathbb{P}(\psi,0)$ yield the chain
transitive sets $\mathcal{U}\times\mathbb{P}(L(\lambda_{i})^{\infty})$ of the
flow $\mathbb{P}\Phi^{1}$ restricted to the projective equator $\mathbb{P}%
^{n,0}$. By \cite[Theorem 8]{ColS24b}, each of them gives one or two maximal
chain transitive sets $\mathcal{U}\times\,_{\mathbb{S}}L(\lambda_{i}%
)_{j}^{\infty}\subset\mathcal{U}\times\mathbb{S}^{n,0},j=0$ or $j=1,2$. Here
$_{\mathbb{S}}L(\lambda_{i})_{0}^{\infty}$ is the unique subset of
$\mathbb{S}^{n,0}$ that projects to $\mathbb{P}(L(\lambda_{i})^{\infty})$ or
else $_{\mathbb{S}}L(\lambda_{i})_{1}^{\infty}$ and $_{\mathbb{S}}%
L(\lambda_{i})_{2}^{\infty}$ are two subsets of $\mathbb{S}^{n,0}$ that
project to $\mathbb{P}(L(\lambda_{j})^{\infty})$. If, for example,
$\lambda_{i}$ is a simple real eigenvalue, the first case occurs, if
$\lambda_{i}$ is the real part of a complex conjugate pair of eigenvalues, the
second case occurs.

We note the following corollary which clarifies the limit behavior of
trajectories on the Poincar\'{e} sphere. The $\alpha$- and $\omega$-limit sets
for a point $s_{0}\in\mathbb{S}^{n}$ and a control $u\in\mathcal{U}$ are%
\begin{align}
\alpha(s_{0},u)  &  :=\left\{  s\in\mathbb{S}^{n}\left\vert \exists
t_{k}\rightarrow-\infty:\pi\varphi^{1}(t_{k},s_{0},u)\rightarrow s\text{ for
}k\rightarrow\infty\right.  \right\}  ,\label{limit_control}\\
\omega(s_{0},u)  &  :=\left\{  s\in\mathbb{S}^{n}\left\vert \exists
t_{k}\rightarrow\infty:\pi\varphi^{1}(t_{k},s_{0},u)\rightarrow s\text{ for
}k\rightarrow\infty\right.  \right\}  .\nonumber
\end{align}
Recall that the homeomorphism $h^{1}:\mathbb{R}^{n}\rightarrow\mathbb{S}%
^{n,+}$ is defined in (\ref{h1}).

\begin{corollary}
\label{Corollary_limit}Consider the control system induced on the Poincar\'{e}
sphere $\mathbb{S}^{n}$ by a linear control system of the form (\ref{linear}).
For all $u\in\mathcal{U}$ and $s_{0}=h^{1}(x_{0})\in\mathbb{S}^{n,+}$ for
$x_{0}\in\mathbb{R}^{n}$, the limit sets $\alpha(s_{0},u)$ and $\omega
(s_{0},u)$ of the trajectory $\pi\varphi^{1}(t,s_{0},u)=h^{1}(\varphi
(t,x_{0},u))$ are contained in one of the central chain control sets
$_{\mathbb{S}}E_{c}^{j},j=0$ or $j=1,2$, or in one of the sets $_{\mathbb{S}%
}L(\lambda_{i})_{j}^{\infty}\subset\mathbb{S}^{n,0},j=0$ or $j=1,2,\lambda
_{i}\not =0$.
\end{corollary}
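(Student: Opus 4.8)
The plan is to reduce the statement to a known fact about limit sets of flows on compact metric spaces, namely that $\alpha$- and $\omega$-limit sets are chain transitive (and hence, being contained in $\mathcal{U}\times\mathbb{S}^n$, are contained in a maximal chain transitive set of the control flow $\pi\Phi^{1}$ on $\mathcal{U}\times\mathbb{S}^{n}$). First I would recall that $\mathcal{U}\times\mathbb{S}^{n}$ is compact (since $\mathcal{U}$ is compact metrizable and $\mathbb{S}^{n}$ is compact) and that $\pi\Phi^{1}$ is a continuous flow on it, being the projectivization to the sphere of the linear control flow $\Phi^{1}$. For a fixed $u\in\mathcal{U}$ and $s_{0}=h^{1}(x_{0})$, the orbit closure of $(u,s_{0})$ under $\pi\Phi^{1}$ has nonempty compact $\alpha$- and $\omega$-limit sets in $\mathcal{U}\times\mathbb{S}^{n}$; by the standard characterization (see e.g. \cite{ColK00}), these limit sets are chain transitive subsets of $\mathcal{U}\times\mathbb{S}^{n}$, hence each is contained in a maximal chain transitive set $\mathcal{E}$ of $\pi\Phi^{1}$.

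Next I would invoke Theorem \ref{Theorem_equivalence}: each maximal chain transitive set $\mathcal{E}$ of the control flow projects onto a chain control set of the control system on $\mathbb{S}^{n}$. Thus the projections of $\alpha(s_{0},u)$ and $\omega(s_{0},u)$ to $\mathbb{S}^{n}$ are contained in chain control sets of system (\ref{S^n}). It therefore remains to enumerate all chain control sets on $\mathbb{S}^{n}$ and show that they are exactly the sets listed in the statement. For this I would combine Theorem \ref{Theorem_twoS} and the discussion following it: the chain control sets of $\mathbb{P}\Phi^{1}$ on $\mathbb{P}^{n}$ are, by the theorem preceding Theorem \ref{Theorem_twoS} together with Theorem \ref{thmprojMorse}, exactly the central one $_{\mathbb{P}}E_{c}=h^{1}(E)$ and the sets $\mathbb{P}(L(\lambda_{i})^{\infty})$ for $\lambda_{i}\neq0$ inside the projective equator. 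Lifting each of these to the sphere via \cite[Theorem 8]{ColS24b} produces, for each, either one set $_{\mathbb{S}}E_{c}^{0}$ (resp.\ $_{\mathbb{S}}L(\lambda_{i})_{0}^{\infty}$) or an antipodal pair $_{\mathbb{S}}E_{c}^{1},\,_{\mathbb{S}}E_{c}^{2}$ (resp.\ $_{\mathbb{S}}L(\lambda_{i})_{1}^{\infty},\,_{\mathbb{S}}L(\lambda_{i})_{2}^{\infty}$); these are precisely the candidate sets in the conclusion. One subtle point: I must note that the chain control sets on $\mathbb{S}^{n}$ arising from the projective equator lie in $\mathbb{S}^{n,0}$, while those arising from $_{\mathbb{P}}E_{c}$ are not subsets of $\mathbb{S}^{n,0}$ (as recorded in the paragraph after Theorem \ref{Theorem_twoS}), so the two families are disjoint and the list is exhaustive.

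Finally, to conclude for the original linear system, I would use Proposition \ref{Proposition_e}(ii), which identifies $\pi\Phi^{1}$ restricted to $\mathcal{U}\times\mathbb{P}^{n,1}\cong\mathcal{U}\times\mathbb{S}^{n,+}$ with the control flow $\Phi$ of (\ref{linear}) via $(\mathrm{id}_{\mathcal{U}},h^{1})$; this gives the asserted identity $\pi\varphi^{1}(t,s_{0},u)=h^{1}(\varphi(t,x_{0},u))$ for $s_{0}=h^{1}(x_{0})$, so the limit sets on $\mathbb{S}^{n}$ really are the images of the (possibly unbounded) trajectory $\varphi(t,x_{0},u)$ of the linear system, and the preceding enumeration applies verbatim. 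The main obstacle I anticipate is not any deep calculation but rather bookkeeping: making sure the chain-transitivity-of-limit-sets argument is applied to the flow on the compact space $\mathcal{U}\times\mathbb{S}^{n}$ (not on the non-compact $\mathcal{U}\times\mathbb{R}^{n}$, where limit sets may be empty), and carefully citing the right combination of earlier results so that the list of chain control sets on $\mathbb{S}^{n}$ is shown to be complete; once that is in place the corollary follows immediately.
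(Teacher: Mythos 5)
Your proposal is correct and follows essentially the same route as the paper's proof: use the fact that limit sets of a flow on a compact metric space are chain transitive (equivalently, lie in the chain recurrent set), enumerate the maximal chain transitive sets of $\pi\Phi^{1}$ on $\mathcal{U}\times\mathbb{S}^{n}$ via the Selgrade decomposition and \cite[Theorem 8]{ColS24b}, and transfer to the original system via the conjugacy $h^{1}$. If anything, your explicit appeal to chain transitivity (rather than mere containment in the chain recurrent set) is a slight sharpening that more directly justifies the phrase "contained in \emph{one of}" the listed sets.
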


\begin{proof}
For any flow on a compact metric space, it is well known that the $\alpha$-
and $\omega$-limit sets are contained in the chain recurrent set; cf. e.g.
Alongi and Nelson \cite[Corollary 2.7.15]{AlonN07} or Colonius and Kliemann
\cite[Proposition 3.1.12]{ColK14} \ For the control flow associated with the
control system on $\mathbb{P}^{n}$ the chain recurrent set is the union of the
sets $\mathcal{U}\times\mathbb{P}L(\lambda_{i})^{\infty},\lambda_{i}\not =0$,
with the projected central subbundle $\mathbb{P}\mathcal{V}_{c}$, which
coincides with the lift of the central chain control set $_{\mathbb{P}}E_{c}$
to the chain transitive set $\left\{  (u,p)\in\mathcal{U}\times\mathbb{P}%
^{n}\left\vert \mathbb{P}\varphi^{1}(t,p,u)\in\,_{\mathbb{P}}E_{c}\text{ for
}t\in\mathbb{R}\right.  \right\}  $. Thus, for the control flow associated
with the control system on $\mathbb{S}^{n}$, the chain recurrent set is the
union of the sets $\mathcal{U}\times\,_{\mathbb{S}}L(\lambda_{i})_{j}^{\infty
},j=0$ or $j=1,2,\lambda_{i}\not =0$, with the intersection of the central
subbundle $\mathcal{V}_{c}$ and $\mathcal{U}\times\mathbb{S}^{n}$; cf.
(\ref{S_c}). Since the limit sets (\ref{limit_control}) are the projections of
the limit sets of the control flow the assertion follows.
\end{proof}

Corollary \ref{Corollary_limit} shows that the possible limit sets of
controlled trajectories are the central chain control sets $_{\mathbb{S}}%
E_{c}^{j}$ and the chain control sets $_{\mathbb{S}}L(\lambda_{i})_{j}%
^{\infty}$ on the equator. On the level of control flows, the possible limit
sets are the lifts of these chain control sets to $\mathcal{U\times}%
\mathbb{S}^{n}$ given by
\begin{align*}
_{\mathbb{S}}\mathcal{E}_{c}^{j}  &  :=\left\{  \left(  u,s\right)
\in\mathcal{U\times}\mathbb{S}^{n}\left\vert \pi\varphi^{1}(t,s,u)\in
\,_{\mathbb{S}}E_{c}^{j}\text{ for all }t\in\mathbb{R}\right.  \right\}  ,\\
\mathcal{U}\times\, _{\mathbb{S}}L(\lambda_{i})_{j}^{\infty}  &  :=\left\{  \left(  u,s\right)
\in\mathcal{U\times}\mathbb{S}^{n}\left\vert \pi\varphi^{1}(t,s,u)\in
\,_{\mathbb{S}}L(\lambda_{i})_{j}^{\infty}\text{ for all }t\in\mathbb{R}%
\right.  \right\}  .
\end{align*}
In the next section, we will determine the linearization about these sets.

\section{Selgrade decomposition on the sphere\label{Section4}}

In this section we determine the Selgrade decomposition and the corresponding
exponential growth rates for the linearization of the control flow induced by
a linear control system on the Poincar\'{e} sphere. Here we have to be more
careful concerning the tangent spaces. Some arguments are taken from Crauel
\cite[Section 3]{Crau91}, who considers random differential systems.

It is convenient to endow $\mathbb{R}^{n}$ with a scalar product, which makes
the Lyapunov spaces pairwise orthogonal, i.e., $\left\langle x,y\right\rangle
=0$ for $x\in L(\lambda_{i}),y\in L(\lambda_{j})$ with $\lambda_{i}%
\not =\lambda_{j}$. On $\mathbb{R}^{n+1}$ we use the scalar product%
\[
\left\langle (x,x_{n+1}),(y,y_{n+1})\right\rangle ^{\prime}=\left\langle
x,y\right\rangle +x_{n+1}\cdot y_{n+1},\text{ for }x,y\in\mathbb{R}^{n}\text{
and }x_{n+1},y_{n+1}\in\mathbb{R}.
\]
Note that the Lyapunov exponents are independent of the scalar product. The
tangent bundle $T\mathbb{R}_{0}^{n+1}~$of $\mathbb{R}_{0}^{n+1}$ is trivial
and is identified with $\mathbb{R}_{0}^{n+1}\times\mathbb{R}^{n+1}$.
Similarly, we identify the projective bundle $P\mathbb{R}_{0}^{n+1}$ with
$\mathbb{R}_{0}^{n+1}\times\mathbb{P}^{n}$. We consider the sphere
$\mathbb{S}^{n}$ as an embedded compact $n$-dimensional submanifold of
$\mathbb{R}_{0}^{n+1}$ and identify the tangent bundle $T\mathbb{S}^{n}$ with
the subset of $T\mathbb{R}_{0}^{n+1}$ given by%
\begin{equation}
T\mathbb{S}^{n}=\left\{  (s,v)\in\mathbb{S}^{n}\times\mathbb{R}^{n+1}%
\left\vert s\in\mathbb{S}^{n}\text{ and }\left\langle v,s\right\rangle
=0\right.  \right\}  . \label{HC_10}%
\end{equation}
Fix the Riemannian metric on $\mathbb{S}^{n}$ induced by this identification.
Note that $T\mathbb{S}^{n}$ can be written as a set of pairs $(s,v_{s})$ where
$s\in\mathbb{S}^{n}$ and $v_{s}\in T_{s}\mathbb{S}^{n}$. With $\pi
:\mathbb{R}_{0}^{n+1}\rightarrow\mathbb{S}^{n},\pi(x,x_{n+1}):=\frac
{(x,x_{n+1})}{\left\Vert (x,x_{n+1})\right\Vert }$, the points $s\in
\mathbb{S}^{n}$ may be written as $s=\pi(x,x_{n+1}),\left\Vert (x,x_{n+1}%
)\right\Vert =1$.

The induced control system on the Poincar\'{e} sphere $\mathbb{S}^{n}$
generates the control flow $\pi\Phi^{1}:\mathbb{R}\times\mathcal{U}%
\times\mathbb{S}^{n}\rightarrow\mathcal{U}\times\mathbb{S}^{n}$ given by%
\begin{equation}
\pi\Phi_{t}^{1}(u,s)=(u(t+\cdot),\pi\varphi^{1}(t,s,u)),t\in\mathbb{R}%
,s=\pi(x,x_{n+1}),u\in\mathcal{U}. \label{pi_phi}%
\end{equation}
This flow can be linearized with respect to the component in $\mathbb{S}^{n}$,
which yields the following continuous flow $T\pi\Phi^{1}:\mathbb{R}%
\times\mathcal{U}\times T\mathbb{S}^{n}\rightarrow\mathcal{U}\times
T\mathbb{S}^{n}$ given by%
\begin{equation}
T\left\vert _{s}\right.  \pi\Phi_{t}^{1}(u,s)(v,v_{n+1})=\left(
u(t+\cdot),\pi\varphi^{1}(t,s,u),D_{s}\pi\varphi^{1}(t,s,u)(v,v_{n+1})\right)
, \label{linearized0}%
\end{equation}
where $u\in\mathcal{U},s\in\mathbb{S}^{n}$, and $D_{s}\pi\varphi
^{1}(t,s,u)(v,v_{n+1})$ means the derivative of $\pi\varphi^{1}(t,s,u)$ with
respect to the second variable at the point $s$ applied to $(v,v_{n+1})\in
T_{s}\mathbb{S}^{n}$ as a linear map.

This is a linear flow on the vector bundle $\mathcal{U}\times T\mathbb{S}^{n}$
with base space $\mathcal{U}\times\mathbb{S}^{n}$. For a Lyapunov space
$L(\lambda_{i_{0}}),\lambda_{i_{0}}\not =0,$ of the matrix $A$, recall that
$L(\lambda_{i_{0}})^{\infty}=L(\lambda_{i_{0}})\times\{0\}\subset
\mathcal{U}\times\mathbb{R}^{n+1}$. By the remarks following Theorem
\ref{Theorem_twoS}, the set $L(\lambda_{i_{0}})^{\infty}\cap\mathbb{S}^{n}$
consists of one or two chain transitive sets denoted by $\mathcal{U}%
\times\,_{\mathbb{S}}L(\lambda_{i_{0}})_{j}^{\infty}\subset$ $\mathcal{U}%
\times\mathbb{S}^{n,0},j=0$ or $j=1,2$. We choose one of these sets and denote
it, for some notational simplification, by$\,_{\mathbb{S}}L(\lambda_{i_{0}%
})^{\infty}$. Define%
\begin{equation}
T_{\,_{\mathbb{S}}L(\lambda_{i_{0}})^{\infty}}\mathbb{S}^{n}:=\bigcup
\nolimits_{s\in_{\mathbb{S}}L(\lambda_{i_{0}})^{\infty}}T_{s}\mathbb{S}%
^{n}\text{ and }P_{\,_{\mathbb{S}}L(\lambda_{i_{0}})^{\infty}}\mathbb{S}%
^{n}:=\bigcup\nolimits_{s\in_{\mathbb{S}}L(\lambda_{i_{0}})^{\infty}%
}\mathbb{P}_{s}\mathbb{S}^{n}, \label{TandP}%
\end{equation}
where $\mathbb{P}_{s}\mathbb{S}^{n}:=\left\{  s\right\}  \times\left\{
\mathbb{P}(v,v_{n+1})\left\vert (v,v_{n+1})\in T_{s}\mathbb{S}^{n}\right.
\right\}  $.

\begin{proposition}
\label{Proposition_base}For the linearized flow (\ref{linearized0}), the base
space can be restricted to the compact invariant set $\mathcal{U}%
\times\,_{\mathbb{S}}L(\lambda_{i_{0}})^{\infty}$. This results in the
following flow defined on $\mathcal{U}\times T_{\,_{\mathbb{S}}L(\lambda
_{i_{0}})^{\infty}}\mathbb{S}^{n}$ given by%
\[
T\left\vert _{\pi(x,0)}\right.  \pi\Phi_{t}^{1}(u,s)(v,v_{n+1})=\left(
u(t+\cdot),\pi\varphi^{1}(t,x,0,u),D_{s}\pi\varphi^{1}(t,x,0,u)(v,v_{n+1)}%
)\right)  ,
\]
for $t\in\mathbb{R},u\in\mathcal{U},s=\pi(x,0)=(x,0)\in\,_{\mathbb{S}%
}L(\lambda_{i_{0}})^{\infty}$, and $(v,v_{n+1})\in T_{s}\mathbb{S}^{n}$. This
is a linear flow again denoted by $T\pi\Phi^{1}$ on a vector bundle with chain
transitive base space $\mathcal{U}\times\,_{\mathbb{S}}L(\lambda_{i_{0}%
})^{\infty}$ and Selgrade decomposition in the form%
\begin{equation}
\mathcal{U}\times T_{\,_{\mathbb{S}}L(\lambda_{i_{0}})^{\infty}}\mathbb{S}%
^{n}=\,_{\mathbb{S}}\mathcal{V}_{1}\oplus\cdots\oplus\,_{\mathbb{S}%
}\mathcal{V}_{k}. \label{Selg_sphere1}%
\end{equation}
Here the $_{\mathbb{S}}\mathcal{V}_{i}$ are invariant subbundles and their
projections to $P_{\,_{\mathbb{S}}L(\lambda_{i_{0}})^{\infty}}\mathbb{S}^{n}$
are the maximal chain transitive sets of the flow induced by $T\pi\Phi^{1}$.
\end{proposition}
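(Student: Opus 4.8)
The plan is to verify, step by step, that the linearized flow (\ref{linearized0}) really does restrict to a well-defined linear flow over the compact invariant base $\mathcal{U}\times{}_{\mathbb{S}}L(\lambda_{i_{0}})^{\infty}$ and then to invoke Selgrade's theorem. First I would observe that $\mathcal{U}\times{}_{\mathbb{S}}L(\lambda_{i_{0}})^{\infty}$ is compact and invariant under $\pi\Phi^{1}$: it is one of the maximal chain transitive sets produced by the remarks following Theorem \ref{Theorem_twoS}, hence a closed (therefore compact, since $\mathcal{U}\times\mathbb{S}^{n}$ is compact) subset of $\mathcal{U}\times\mathbb{S}^{n,0}$, and invariance is immediate from Proposition \ref{Proposition_e}(i) together with the fact that the hyperplane $\mathbb{R}^{n}\times\{0\}$ is $\varphi^{1}$-invariant; indeed for $s=(x,0)$ one has $\varphi^{1}(t,x,0,u)=(\psi(t,x),0)$, so the whole trajectory stays in $\mathbb{S}^{n,0}$ and, because $L(\lambda_{i_{0}})$ is $\psi$-invariant, stays in $L(\lambda_{i_{0}})^{\infty}\cap\mathbb{S}^{n}$; being chain transitive it cannot leave the component ${}_{\mathbb{S}}L(\lambda_{i_{0}})^{\infty}$. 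Then the bundle $\mathcal{U}\times T_{\,_{\mathbb{S}}L(\lambda_{i_{0}})^{\infty}}\mathbb{S}^{n}$ from (\ref{TandP}) is the restriction of the vector bundle $\mathcal{U}\times T\mathbb{S}^{n}$ to this base, hence a vector bundle over a compact base, and the cocycle $(u,s)\mapsto D_{s}\pi\varphi^{1}(t,s,u)$ restricted to it is the claimed flow; its explicit form is just (\ref{linearized0}) with $s=(x,0)$ plugged in, using $\pi\varphi^{1}(t,x,0,u)=(\psi(t,x),0)$. Linearity of each fiber map and continuity of the flow are inherited from $T\pi\Phi^{1}$ on the full bundle.

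Next I would check the two hypotheses of Selgrade's theorem (as quoted in the introduction from \cite[Theorem 9.2.5]{ColK14}): the base flow is chain transitive, and the bundle flow is linear with compact base. Chain transitivity of $\mathcal{U}\times{}_{\mathbb{S}}L(\lambda_{i_{0}})^{\infty}$ is exactly the defining property of a maximal chain transitive set, so it holds by construction. Linearity over a compact base has just been established. Selgrade's theorem then yields a finite Whitney sum decomposition into invariant subbundles ${}_{\mathbb{S}}\mathcal{V}_{1}\oplus\cdots\oplus{}_{\mathbb{S}}\mathcal{V}_{k}$ whose projectivizations are precisely the maximal chain transitive sets of the projectivized flow on $P_{\,_{\mathbb{S}}L(\lambda_{i_{0}})^{\infty}}\mathbb{S}^{n}$. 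That is the content of (\ref{Selg_sphere1}) and the final sentence of the proposition, so nothing further is needed once the setup is in place. It is worth recording that the projective bundle $P_{\,_{\mathbb{S}}L(\lambda_{i_{0}})^{\infty}}\mathbb{S}^{n}$ is indeed the fiberwise projectivization of $T_{\,_{\mathbb{S}}L(\lambda_{i_{0}})^{\infty}}\mathbb{S}^{n}$, which is how it was defined in (\ref{TandP}), so the statement about chain transitive sets is literally the Selgrade conclusion.

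The one genuine subtlety — and the step I expect to cost the most care — is that the base-space restriction is consistent with the tangent-bundle structure: one must know that for $s\in{}_{\mathbb{S}}L(\lambda_{i_{0}})^{\infty}\subset\mathbb{S}^{n,0}$ the derivative $D_{s}\pi\varphi^{1}(t,s,u)$ maps $T_{s}\mathbb{S}^{n}$ into $T_{\pi\varphi^{1}(t,s,u)}\mathbb{S}^{n}$ and that this is a \emph{continuous} bundle map over the (merely topological, not smooth) base $\mathcal{U}$. The first point is automatic because $\pi\varphi^{1}_{t}$ is a diffeomorphism of $\mathbb{S}^{n}$ for each fixed $(u,t)$, so its differential carries tangent spaces to tangent spaces; here I would simply cite the already-displayed identification (\ref{HC_10}) of $T\mathbb{S}^{n}$. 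The continuity in $(u,s)$ is exactly the kind of statement handled by the cocycle-differentiability arguments of Crauel \cite[Section 3]{Crau91} referenced at the start of the section — the dependence of $\varphi^{1}$ on the control $u$ is only weak$^{\ast}$-continuous, not differentiable, but since $u$ enters (\ref{lift2}) linearly and only through the last coordinate, which is frozen at $x_{n+1}=0$ on this base, the map $\varphi^{1}(t,x,0,u)=(\psi(t,x),0)$ in fact does not depend on $u$ at all on $L(\lambda_{i_{0}})^{\infty}$. Hence on this restricted base the linearized cocycle reduces to the $u$-independent derivative cocycle of $\mathbb{P}\psi$ (suitably lifted to the sphere), continuity is trivial, and the "hard part" evaporates once one notices this reduction. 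I would phrase the proof so as to make that observation early, thereby reducing everything to the classical Selgrade decomposition of the smooth flow $T(\pi\psi)$ on $T_{\,_{\mathbb{S}}L(\lambda_{i_{0}})^{\infty}}\mathbb{S}^{n}$ crossed with the compact factor $\mathcal{U}$.
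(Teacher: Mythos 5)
Your main argument is correct and matches the paper's proof in essence. The paper establishes chain transitivity of the base $\mathcal{U}\times\,_{\mathbb{S}}L(\lambda_{i_{0}})^{\infty}$ by a direct argument: use Alongi and Nelson \cite[Theorem 2.7.18]{AlonN07} to normalize all jump times to $1$ for both factor flows and then combine chains, noting that the flow on $\,_{\mathbb{S}}L(\lambda_{i_{0}})^{\infty}$ does not depend on the control. The paper then remarks, parenthetically, that the same conclusion follows from the remarks after Theorem \ref{Theorem_twoS} --- which is exactly the route you take. The invocation of Selgrade's theorem is identical in both.

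Your final paragraph, however, contains a genuine error that you should remove. You infer from the fact that $\varphi^{1}(t,x,0,u)=(\psi(t,x),0)$ is $u$-independent on the equator that ``the linearized cocycle reduces to the $u$-independent derivative cocycle of $\mathbb{P}\psi$.'' This does not follow: $D_{s}\pi\varphi^{1}(t,x,0,u)$ measures the behavior of $\pi\varphi^{1}$ in a \emph{neighborhood} of $s=(x,0)$, and nearby points with $x_{n+1}\neq 0$ do feel the control. Lemma \ref{Lemma_derivative}, formula (\ref{D_phi}), exhibits the $u$-dependence explicitly through the terms $v_{n+1}\int_{0}^{t}e^{A(t-\sigma)}Bu(\sigma)\,d\sigma$, which vanish only for tangent directions with $v_{n+1}=0$, i.e. those tangent to $\mathbb{S}^{n,0}$. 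The off-equator directions --- precisely the ones that form the central Selgrade bundle $_{\mathbb{S}}\mathcal{V}_{c}$ --- carry a genuinely $u$-dependent cocycle, and the whole content of Theorem \ref{Theorem_LinLyap} lies in analyzing that dependence. So only the base flow is $u$-independent, not the bundle flow, and the ``hard part'' does not evaporate. Fortunately, your first two paragraphs never use the false reduction, so the proof survives once that remark is deleted.
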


\begin{proof}
The flows on the compact metric spaces $\mathcal{U}$ and $\,_{\mathbb{S}%
}L(\lambda_{i_{0}})^{\infty}$ are chain transitive. By Alongi and Nelson
\cite[Theorem 2.7.18]{AlonN07}, for both flows, it suffices to consider chains
with all jump times equal to $1$. This implies that the product flow on
$\mathcal{U}\times\,_{\mathbb{S}}L(\lambda_{i_{0}})^{\infty}$ is chain
transitive noting that the flow on $_{\mathbb{S}}L(\lambda_{i_{0}})^{\infty}$
does not depend on the element in $\mathcal{U}$. (Alternatively, this also
holds by the remarks following Theorem \ref{Theorem_twoS}.) By Selgrade's
theorem (cf. e.g. Colonius and Kliemann \cite[Theorem 9.2.5]{ColK14}), every
linear flow with chain transitive base space admits a Selgrade decomposition.
\end{proof}

In the following, we will determine the Selgrade bundles in
(\ref{Selg_sphere1}).

Since $\pi$ is a submersion, $T\pi:T\mathbb{R}_{0}^{n+1}\rightarrow
T\mathbb{S}^{n}$ acts as a projection along $\ker(T\pi)$. Note that
$\ker(T_{y}\pi)=\mathrm{span}(y)$. By (\ref{HC_10}) and, with the particular
choice of the Riemannian metric, $T\pi$ becomes an orthogonal projection, and
hence, for $y\in\mathbb{R}_{0}^{n+1}$ and $v\in T_{y}\mathbb{R}_{0}^{n+1}$,%
\begin{equation}
(T_{y}\pi)(y,v)=\left(  \pi y,\left\Vert y\right\Vert ^{-1}(v-\left\Vert
y\right\Vert ^{-2}\left\langle v,y\right\rangle y)\right)  \in T_{\pi
y}\mathbb{S}^{n}. \label{HC_11}%
\end{equation}
For $\left\Vert y\right\Vert =1$, this simplifies to $(T_{y}\pi
)(y,v)=(y,v-\left\langle v,y\right\rangle y)$. The geometric interpretation of
formula (\ref{HC_11}) is the following: Project $y\in\mathbb{R}_{0}^{n+1}$ to
$s=\pi y=\frac{y}{\left\Vert y\right\Vert }\in\mathbb{S}^{n}$. Then the
tangent vector $v\in T_{y}\mathbb{R}_{0}^{n+1}$ is mapped to $\left\Vert
y\right\Vert ^{-1}v$ minus the projection $\left\langle v,\frac{y}{\left\Vert
y\right\Vert }\right\rangle \frac{y}{\left\Vert y\right\Vert }$ to the radial component.

The spaces $L(\lambda_{i})^{\infty}=L(\lambda_{i})\times\{0\}$ are subspaces
of $\mathbb{R}^{n+1}$ and $\left\{  y\right\}  \times L(\lambda_{i})^{\infty}$
are subspaces of the tangent spaces $T_{y}\mathbb{R}_{0}^{n+1}\cong
\{y\}\times\mathbb{R}^{n+1}$. Consider the map$\,(\mathrm{id}_{\mathcal{U}%
},T_{(x,0)}\pi):\mathcal{U}\times T_{(x,0)}\mathbb{R}^{n+1}\rightarrow
\mathcal{U}\times T_{s}\mathbb{S}^{n}$.

\begin{lemma}
\label{Lemma_T}Let $s=\pi(x,0)\in\,_{\mathbb{S}}L(\lambda_{i_{0}})^{\infty
}\allowbreak\subset\mathbb{S}^{n,0}$ with $(x,0)\in\mathbb{R}^{n+1}$. Then one
obtains
\begin{align*}
\,(\mathrm{id}_{\mathcal{U}},T_{(x,0)}\pi)(\mathcal{U}\times\left\{
(x,0)\right\}  \times L(\lambda_{i})^{\infty})  &  =\mathcal{U}\times\left\{
(s,v,0)\left\vert (v,0)\in L(\lambda_{i})^{\infty}\right.  \right\}
,\lambda_{i}\not =\lambda_{i_{0}},\\
\,(\mathrm{id}_{\mathcal{U}},T_{(x,0)}\pi)(\mathcal{U}\times\left\{
(x,0)\right\}  \times L(\lambda_{i_{0}})^{\infty})  &  =\mathcal{U}%
\times\left\{  (s,v-\left\langle v,s\right\rangle s,0)\left\vert (v,0)\in
L(\lambda_{i_{0}})^{\infty}\right.  \right\}  ,\\
(\mathrm{id}_{\mathcal{U}},T_{(x,0)}\pi)(\left\{  (x,0)\right\}
\times\mathcal{V}_{c})  &  =\left\{  \left(  u,s,w-\left\langle
w,s\right\rangle s,r\right)  \left\vert (u,w,r)\in\mathcal{V}_{c}\right.
\right\}  .
\end{align*}
The kernel of $T_{(x,0)}\pi$ restricted to $\left\{  (x,0)\right\}  \times
L(\lambda_{i_{0}})$ equals the span of $(x,0)$.
\end{lemma}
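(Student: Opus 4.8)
The plan is to compute each of the four images by applying the explicit formula \eqref{HC_11} for $T_{(x,0)}\pi$ and exploiting the orthogonality of the Lyapunov spaces built into the chosen scalar product on $\mathbb{R}^{n+1}$. Throughout, $s=\pi(x,0)=(x,0)$ has $\left\Vert (x,0)\right\Vert =1$, so \eqref{HC_11} simplifies to $(T_{(x,0)}\pi)\bigl((x,0),w\bigr)=\bigl(s,\,w-\left\langle w,s\right\rangle s\bigr)$ for $w\in\mathbb{R}^{n+1}$. The key structural fact is that $s\in L(\lambda_{i_0})^{\infty}=L(\lambda_{i_0})\times\{0\}$, and the scalar product $\left\langle\cdot,\cdot\right\rangle'$ on $\mathbb{R}^{n+1}$ was fixed precisely so that $L(\lambda_i)\perp L(\lambda_j)$ in $\mathbb{R}^n$ for $\lambda_i\ne\lambda_j$, while the last coordinate direction is orthogonal to all of $\mathbb{R}^n\times\{0\}$.

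First I would treat the case $\lambda_i\ne\lambda_{i_0}$: for $(v,0)\in L(\lambda_i)^{\infty}$ we have $\left\langle (v,0),s\right\rangle'=\left\langle v,x\right\rangle=0$ since $x\in L(\lambda_{i_0})$ and $v\in L(\lambda_i)$ lie in orthogonal Lyapunov spaces; hence $(T_{(x,0)}\pi)(s,(v,0))=(s,(v,0))$, giving the first displayed identity. Next, for the case $\lambda_i=\lambda_{i_0}$: here $v$ and $x$ both lie in $L(\lambda_{i_0})$, so $\left\langle (v,0),s\right\rangle'=\left\langle v,x\right\rangle$ need not vanish, and the formula yields $(T_{(x,0)}\pi)(s,(v,0))=(s,(v,0)-\left\langle (v,0),s\right\rangle' s)$, which (since the last coordinate of $(v,0)-\left\langle v,s\right\rangle s$ is again $0$, as $s$ has zero last coordinate) is exactly $(s,v-\left\langle v,s\right\rangle s,0)$. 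Taking the union over $u\in\mathcal{U}$ gives the second identity. For the third identity, I apply \eqref{HC_11} to a generic element $(w,r)\in\mathbb{R}^{n+1}$ representing a vector in $\mathcal{V}_c$; since $s$ has zero last coordinate, $\left\langle (w,r),s\right\rangle'=\left\langle w,x\right\rangle=\left\langle w,s\right\rangle$, so the image is $\bigl(s,\,(w,r)-\left\langle w,s\right\rangle s\bigr)=\bigl(s,\,w-\left\langle w,s\right\rangle s,\,r\bigr)$, and attaching the $\mathcal{U}$-factor and using the description \eqref{central} of $\mathcal{V}_c$ gives the stated formula. Finally, the kernel claim is immediate: $\ker(T_{(x,0)}\pi)=\mathrm{span}(x,0)$ in $T_{(x,0)}\mathbb{R}_0^{n+1}$, and since $(x,0)\in L(\lambda_{i_0})\times\{0\}\subset L(\lambda_{i_0})$ (identifying $L(\lambda_{i_0})$ with $L(\lambda_{i_0})^\infty$), the restriction of $\ker(T_{(x,0)}\pi)$ to $\{(x,0)\}\times L(\lambda_{i_0})$ is exactly $\mathrm{span}(x,0)$.

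There is no serious obstacle here; the lemma is a bookkeeping exercise. The only point requiring a little care is making sure that the orthogonal projection $w\mapsto w-\left\langle w,s\right\rangle s$ preserves the ``last coordinate equals $0$'' property in the first two identities (so that the images genuinely lie in $L(\lambda_i)^\infty$ up to the projection, with vanishing $(n+1)$-st slot) while in the third identity it does \emph{not}, because elements of $\mathcal{V}_c$ have an arbitrary last coordinate $r$ that survives the projection untouched (as $s$ is orthogonal to the last coordinate axis). Keeping the two scalar products $\left\langle\cdot,\cdot\right\rangle$ on $\mathbb{R}^n$ and $\left\langle\cdot,\cdot\right\rangle'$ on $\mathbb{R}^{n+1}$ notationally straight throughout the computation is the main thing to watch.
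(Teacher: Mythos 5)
Your proof is correct and follows essentially the same route as the paper: apply formula (\ref{HC_11}) with $\left\Vert (x,0)\right\Vert = 1$, use orthogonality of the Lyapunov spaces to make the tangential correction term vanish when $\lambda_i \neq \lambda_{i_0}$, and observe that the $(n+1)$-st coordinate passes through unchanged because $s$ lies in $\mathbb{R}^n\times\{0\}$. The paper derives the kernel claim by a short direct computation (setting the projected vector equal to zero and deducing $v_{n+1}=0$ and $(v,0)\in\mathrm{span}(x,0)$), whereas you invoke the known fact $\ker(T_{(x,0)}\pi)=\mathrm{span}(x,0)$ and intersect with $L(\lambda_{i_0})^\infty$; both are fine and amount to the same thing.
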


\begin{proof}
For $(x,0)\in\mathbb{S}^{n}$ and $(v,v_{n+1})\in\mathbb{R}^{n}\times
\mathbb{R}\cong T_{(x,0)}\mathbb{R}_{0}^{n+1}$, formula (\ref{HC_11}) implies%
\begin{align*}
(T_{(x,0)}\pi)(x,0)(v,v_{n+1})  &  =\left(  \pi(x,0),(v,v_{n+1})-\left\Vert
x\right\Vert ^{-2}\left\langle (v,v_{n+1}),(x,0)\right\rangle (x,0)\right) \\
&  =\left(  s,v-\left\langle v,\frac{x}{\left\Vert x\right\Vert }\right\rangle
\frac{x}{\left\Vert x\right\Vert },v_{n+1}\right)  .
\end{align*}
For $i\not =i_{0}$ and $s=\pi(x,0)\in\,_{\mathbb{S}}L(\lambda_{i_{0}}%
)^{\infty}\allowbreak\subset\mathbb{S}^{n}$ orthogonality of $L(\lambda
_{i_{0}})$ and $L(\lambda_{i})$ implies that
\begin{align*}
(T_{(x,0)}\pi)\left(  (x,0),L(\lambda_{i})^{\infty}\right)   &  =\left\{
\left(  (s,v-\left\langle v,\frac{x}{\left\Vert x\right\Vert }\right\rangle
\frac{x}{\left\Vert x\right\Vert },0\right)  \left\vert (v,0)\in L(\lambda
_{i})^{\infty}\right.  \right\} \\
&  =\left\{  (s,v,0)\left\vert (v,0)\in L(\lambda_{i})^{\infty}\right.
\right\}  \subset T_{s}\mathbb{S}^{n}.
\end{align*}
In particular, $\left(  T_{(x,0)}\pi\right)  :T_{(x,0)}L(\lambda
_{i})\rightarrow T_{s}\mathbb{S}^{n}$ is injective. For $i=i_{0}$, the kernel
of $T_{(x,0)}\pi$ restricted to $L(\lambda_{i_{0}})^{\infty}$ equals the span
of $(x,0)$: In fact, by (\ref{HC_11}), the equality $T_{(x,0)}\pi\left(
(x,0)(v,v_{n+1})\right)  =(x,0,(0,0))$ holds if and only if%
\begin{align*}
(0,0)  &  =(v,v_{n+1})-\left\Vert x\right\Vert ^{-2}\left\langle
(v,v_{n+1}),(x,0)\right\rangle (x,0)=(v,v_{n+1})-\left\Vert x\right\Vert
^{-2}\left\langle v,x\right\rangle (x,0)\\
&  =\left(  v-\left\langle v,\frac{x}{\left\Vert x\right\Vert }\right\rangle
\frac{x}{\left\Vert x\right\Vert },v_{n+1}\right)  .
\end{align*}
This holds if and only if $v_{n+1}=0$ and $(v,v_{n+1})$ is in the span of
$(x,0)$.

Recall that $\mathcal{V}_{c}$ is given by (\ref{central}) and compute using
(\ref{HC_11})%
\begin{align*}
&  (T_{(x,0)}\pi)\left(  (x,0),(-re(u,0)+y,r)\right) \\
&  =\left(  s,-re(u,0)+y,r)-\left\langle (-re(u,0)+y,r),\pi(x,0)\right\rangle
\pi(x,0)\right) \\
&  =\left(  s,-re(u,0)+y-\left\langle -re(u,0)+y,s\right\rangle s,r\right)  .
\end{align*}
With $w:=-re(u,0)+y$ it follows that%
\[
(\mathrm{id}_{\mathcal{U}},T_{(x,0)}\pi)(\left\{  (x,0)\right\}
\times\mathcal{V}_{c})=\left\{  \left(  u,s,w-\left\langle w,s\right\rangle
s,r\right)  \left\vert (u,w,r)\in\mathcal{V}_{c}\right.  \right\}  .
\]

\end{proof}

Next we determine the derivative of the cocycle $\pi\varphi^{1}(t,x,0,u)$ in a
point $s_{0}\in\mathbb{S}^{n,0}$ in direction $(v,v_{n+1})$.

\begin{lemma}
\label{Lemma_derivative}The derivative of the map $\pi\varphi^{1}%
(t,\cdot,u):\mathbb{S}^{n}\rightarrow\mathbb{S}^{n}$ in the point
$s=\pi(x,0)\in\mathbb{S}^{n}$ in direction $(v,v_{n+1})\in T_{s}\mathbb{S}%
^{n}$ is the element of $T_{\pi\varphi^{1}(t,s,u)}\mathbb{S}^{n}$ given by%
\begin{align}
&  D_{s}\pi\varphi^{1}(t,x,0,u)(v,v_{n+1)})\nonumber\\
&  =\left\Vert e^{At}x\right\Vert ^{-1}\left(  e^{At}v+v_{n+1}\int_{0}%
^{t}e^{A(t-\sigma)}Bu(\sigma)d\sigma,v_{n+1}\right) \label{D_phi}\\
&  \qquad-\left\Vert e^{At}x\right\Vert ^{-3}\left(  e^{At}x,0\right)
\left\langle e^{At}v+v_{n+1}\int_{0}^{t}e^{A(t-\sigma)}Bu(\sigma
)d\sigma,e^{At}x\right\rangle .\nonumber
\end{align}

\end{lemma}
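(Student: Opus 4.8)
The plan is to compute the derivative directly from the explicit formula for $\pi\varphi^1$ by combining the chain rule with the already-derived formula (\ref{HC_11}) for the tangent map of the radial projection $\pi$. First I would recall from (\ref{phi_1}) that on the equator, i.e.\ for initial condition $(x,0)$ with the last coordinate $r=0$, the linear cocycle acts as $\varphi^1(t,x,0,u)=(e^{At}x,0)$; but crucially, when we differentiate $\varphi^1$ with respect to the \emph{state} variable $(x,x_{n+1})$ and evaluate the derivative at a point of the equator in a direction $(v,v_{n+1})$ which may have $v_{n+1}\neq 0$, we pick up the control-dependent off-diagonal term. Explicitly, since $\varphi^1$ is linear in the state, $D_{(x,0)}\varphi^1(t,\cdot,u)(v,v_{n+1}) = \varphi^1(t,v,v_{n+1},u) = \bigl(e^{At}v + v_{n+1}\int_0^t e^{A(t-\sigma)}Bu(\sigma)\,d\sigma,\, v_{n+1}\bigr)$. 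Call this vector $W=W(t,v,v_{n+1})\in\mathbb{R}^{n+1}$; it is the derivative of $\varphi^1$ in the ambient space $\mathbb{R}_0^{n+1}$.

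Next I would apply the chain rule $D_s(\pi\circ\varphi^1(t,\cdot,u)) = (T_{\varphi^1(t,s,u)}\pi)\circ D_s\varphi^1(t,\cdot,u)$, where $\varphi^1(t,s,u) = \varphi^1(t,x,0,u) = (e^{At}x,0)$, which has norm $\|e^{At}x\|$ (not $1$ in general). I then substitute $y=(e^{At}x,0)$ and $v=W$ into the general formula (\ref{HC_11}), namely $(T_y\pi)(y,v) = \bigl(\pi y,\ \|y\|^{-1}(v - \|y\|^{-2}\langle v,y\rangle y)\bigr)$. This gives
\[
D_s\pi\varphi^1(t,x,0,u)(v,v_{n+1}) = \|e^{At}x\|^{-1}W - \|e^{At}x\|^{-3}\,\langle W,(e^{At}x,0)\rangle\,(e^{At}x,0),
\]
and substituting the expression for $W$ yields exactly the claimed formula (\ref{D_phi}), once one notes that $\langle W,(e^{At}x,0)\rangle = \bigl\langle e^{At}v + v_{n+1}\int_0^t e^{A(t-\sigma)}Bu(\sigma)\,d\sigma,\, e^{At}x\bigr\rangle$ since the last coordinate of $(e^{At}x,0)$ vanishes. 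I should also remark that $e^{At}x\neq 0$ for all $t$, so that all the norms appearing are strictly positive and the formula is well defined; this is immediate because $x\neq 0$ (as $s\in\mathbb{S}^n$) and $e^{At}$ is invertible.

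The only genuine point requiring a word of care — and the step I'd flag as the main (if modest) obstacle — is the verification that $D_s\varphi^1(t,\cdot,u)$ maps $T_s\mathbb{S}^n$ into $T_{\varphi^1(t,s,u)}\mathbb{R}_0^{n+1}$ in a way compatible with the identifications of the tangent bundles used in (\ref{HC_10}): one must check that applying $T\pi$ afterwards indeed lands in $T_{\pi\varphi^1(t,s,u)}\mathbb{S}^n$, i.e.\ that the resulting vector is orthogonal to $\pi\varphi^1(t,s,u)$. But this is automatic: formula (\ref{HC_11}) is precisely the orthogonal projection onto $s^\perp$, so by construction $(T_y\pi)(y,v)$ satisfies $\langle (T_y\pi)(y,v),\pi y\rangle = 0$. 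Hence no extra computation is needed beyond substitution, and the lemma follows.
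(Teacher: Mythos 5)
Your proof is correct and follows the same underlying computation as the paper's. The only (welcome) difference is organizational: you explicitly factor the chain rule as $D_s(\pi\circ\varphi^1(t,\cdot,u)) = (T_{\varphi^1(t,s,u)}\pi)\circ D_s\varphi^1(t,\cdot,u)$ and then reuse the already-derived formula (\ref{HC_11}) for $T_y\pi$, whereas the paper rederives the tangent map of the radial projection on the spot via the quotient rule applied to $\varphi^1/\langle\varphi^1,\varphi^1\rangle^{1/2}$. Both yield the same arithmetic; your version avoids the redundancy and makes the role of the earlier formula transparent. Your observations that $D_{(x,0)}\varphi^1(t,\cdot,u)(v,v_{n+1})=\varphi^1(t,v,v_{n+1},u)$ by linearity, that $e^{At}x\neq 0$ so the norms are positive, and that $T_y\pi$ lands in $(\pi y)^{\perp}$ by construction, are all accurate and suffice to close the argument.
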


\begin{proof}
By (\ref{Fi_1a}), the cocycle on $\mathbb{S}^{n}$ satisfies%
\[
\pi\varphi^{1}(t,\pi(x,0),u)=\frac{\varphi^{1}(t,x,0,u)}{\left\langle
\varphi^{1}(t,x,0,u),\varphi^{1}(t,x,0,u)\right\rangle ^{1/2}}.
\]
We compute for the derivatives in $(x,0)$ in direction $(v,v_{n+1})\in
T_{s}\mathbb{S}^{n}$%
\[
D_{s}\varphi^{1}(t,x,0,u)(v,v_{n+1)})=\left(  e^{At}v+v_{n+1}\int_{0}%
^{t}e^{A(t-\sigma)}Bu(\sigma)d\sigma,v_{n+1}\right)
\]
and%
\begin{align*}
&  D_{s}\left\langle \varphi^{1}(t,x,0,u),\varphi^{1}(t,x,0,u)\right\rangle
^{1/2}(v,v_{n+1})\\
&  =\frac{\left\langle (e^{At}v+v_{n+1}\int_{0}^{t}e^{A(t-\sigma)}%
Bu(\sigma)d\sigma,v_{n+1}),\varphi^{1}(t,x,0,u)\right\rangle }{\left\langle
\varphi^{1}(t,x,0,u),\varphi^{1}(t,x,0,u)\right\rangle ^{1/2}}\\
&  =\left\Vert e^{At}x\right\Vert ^{-1}\left\langle e^{At}v+v_{n+1}\int
_{0}^{t}e^{A(t-\sigma)}Bu(\sigma)d\sigma,e^{At}x\right\rangle .
\end{align*}
This yields%
\begin{align*}
&  D_{s}\pi\varphi^{1}(t,x,0,u)(v,v_{n+1})\\
&  =\left\Vert \varphi^{1}(t,x,0,u)\right\Vert ^{-2}\left[  D_{s}\varphi
^{1}(t,x,0,u)(v,v_{n+1})\cdot\left\langle \varphi^{1}(t,x,0,u),\varphi
^{1}(t,x,0,u)\right\rangle ^{1/2}\right. \\
&  \qquad\qquad-\left.  \varphi^{1}(t,x,0,u)\cdot D_{s}\left\langle
\varphi^{1}(t,x,0,u),\varphi^{1}(t,x,0,u)\right\rangle ^{1/2}(v,v_{n+1}%
)\right] \\
&  =\left\Vert e^{At}x\right\Vert ^{-2}\left[  \left(  e^{At}v+v_{n+1}\int
_{0}^{t}e^{A(t-\sigma)}Bu(\sigma)d\sigma,v_{n+1})\right)  \right.  \left\Vert
\left(  e^{At}x,0\right)  \right\Vert \\
&  \qquad\qquad-\left.  \left(  e^{At}x,0\right)  \left\Vert e^{At}%
x\right\Vert ^{-1}\left\langle e^{At}v+v_{n+1}\int_{0}^{t}e^{A(t-\sigma
)}Bu(\sigma)d\sigma,e^{At}x\right\rangle \right] \\
&  =\left\Vert e^{At}x\right\Vert ^{-1}\left(  e^{At}v+v_{n+1}\int_{0}%
^{t}e^{A(t-\sigma)}Bu(\sigma)d\sigma,v_{n+1}\right) \\
&  \qquad-\left\Vert e^{At}x\right\Vert ^{-3}\left(  e^{At}x,0\right)
\left\langle e^{At}v+v_{n+1}\int_{0}^{t}e^{A(t-\sigma)}Bu(\sigma
)d\sigma,e^{At}x\right\rangle .
\end{align*}
\bigskip
\end{proof}

For the control flow $\pi\Phi^{1}$ defined in (\ref{pi_phi}) with
linearization $T\pi\Phi$ defined in (\ref{linearized0}), the Lyapunov exponent
of a point $s=(x,x_{n+1})\in\mathbb{S}^{n}\subset\mathbb{R}^{n+1}$ and control
$u\in\mathcal{U}$ in direction $(0,0)\not =(v,v_{n+1})\in T_{s}\mathbb{S}%
^{n}\subset\mathbb{R}^{n}\times\mathbb{R}$ is given by%
\[
\lambda(s,u;v,v_{n+1}):=\lim_{\left\vert t\right\vert \rightarrow\infty}%
\frac{1}{t}\log\left\Vert D_{(x,x_{n+1})}\pi\varphi^{1}(t,x,x_{n+1}%
,u)(v,v_{n+1)})\right\Vert ,
\]
where $(u,s,v,v_{n+1})\in\mathcal{U}\times T_{s}\mathbb{S}^{n}$. The following
theorem describes the Selgrade bundles and their Lyapunov exponents.

\begin{theorem}
\label{Theorem_LinLyap}Consider the projected linear control flow $\pi\Phi
^{1}$ on $\mathcal{U}\times\mathbb{S}^{n}$ associated with the lift
(\ref{lift2}) of a linear control system of the form (\ref{linear}) and the
linearized flow $T\pi\Phi$ with base space restricted to $\mathcal{U}%
\times\,_{\mathbb{S}}L(\lambda_{i_{0}})^{\infty}$, as described in Proposition
\ref{Proposition_base}.

(i) Then the Selgrade bundles in (\ref{Selg_sphere1}) have the following form:%
\begin{align*}
_{\mathbb{S}}\mathcal{V}_{c}  &  =(\mathrm{id}_{\mathcal{U}},T\pi
)\mathcal{V}_{c}\text{ with }\dim\,_{\mathbb{S}}\mathcal{V}_{c}=\dim
L^{0}+1;\\
_{\mathbb{S}}\mathcal{V}_{i}  &  =(\mathrm{id}_{\mathcal{U}},T\pi)\left(
\mathcal{U}\times L(\lambda_{i})^{\infty}\right)  =\mathcal{U}\times
\,_{\mathbb{S}}L(\lambda_{i_{0}})^{\infty}\times L(\lambda_{i})^{\infty}
\end{align*}
with $\dim\,_{\mathbb{S}}\mathcal{V}_{i}=\dim L(\lambda_{i})$,
for $\lambda_{i}\not =0,\lambda_{i_{0}}$. If $\dim L(\lambda_{i_{0}})>1$ there
is an additional Selgrade bundle given by%
\[
_{\mathbb{S}}\mathcal{V}_{i_{0}}=(\mathrm{id}_{\mathcal{U}},T\pi
)(\mathcal{U}\times L(\lambda_{i_{0}})^{\infty})\text{ with }\dim
\,_{\mathbb{S}}\mathcal{V}_{i_{0}}=\dim L(\lambda_{i_{0}})-1.
\]

(ii) The Lyapunov exponents are%
\begin{align*}
\lambda(s,u;v,0)  &  =\lambda_{i}-\lambda_{i_{0}}\text{ for all }%
(u,s,v,0)\in\,_{\mathbb{S}}\mathcal{V}_{i}\text{ with }\lambda_{i}%
\not =0,\lambda_{i_{0}},\\
\lambda(s,u;v,v_{n+1})  &  =-\lambda_{i_{0}}\text{ for \thinspace
}(u,s,v,v_{n+1})\in\,_{\mathbb{S}}\mathcal{V}_{c}.
\end{align*}
For $\dim L(\lambda_{i_{0}})>1$%
\begin{equation}
\lambda(s,u;v,0)=0\text{ for all }(u,s,v,0)\in\,_{\mathbb{S}}\mathcal{V}%
_{i_{0}}. \label{lambda_i0}%
\end{equation}

\end{theorem}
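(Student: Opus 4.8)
The plan is to leverage the already-established Selgrade decomposition of the \emph{lifted} flow $\Phi^1$ on $\mathcal{U}\times\mathbb{R}^{n+1}$ (Theorem~\ref{Theorem_Selg_lift}) together with the explicit derivative formula of Lemma~\ref{Lemma_derivative} and the image computations of Lemma~\ref{Lemma_T}, and to transport everything through the submersion $\pi$. First I would record the general principle that, for a linear flow on a vector bundle which is itself obtained as a quotient of another linear flow by a one-dimensional invariant subbundle (here the radial direction $\ker T\pi = \mathrm{span}(y)$), the Selgrade bundles of the quotient are exactly the $T\pi$-images of the Selgrade bundles of the total flow, \emph{except} that the bundle containing the radial direction drops dimension by one (and disappears if it was one-dimensional). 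This is the abstract reason behind the list in part~(i): the Selgrade summands $\mathcal{U}\times L(\lambda_i)^\infty$ for $\lambda_i\neq\lambda_{i_0}$ and $\mathcal{V}_c$ map injectively under $T\pi$ by Lemma~\ref{Lemma_T}, while $\mathcal{U}\times L(\lambda_{i_0})^\infty$ contains the radial line $\mathrm{span}(x,0)$ over each $s=\pi(x,0)\in\,_{\mathbb{S}}L(\lambda_{i_0})^\infty$, so its image $_{\mathbb{S}}\mathcal{V}_{i_0}$ has dimension $\dim L(\lambda_{i_0})-1$ and is absent when $\dim L(\lambda_{i_0})=1$. The dimension count then follows from $\dim T\mathbb{S}^n = n$ together with Theorem~\ref{Theorem_Selg_lift}(ii).

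Next I would verify that these $T\pi$-images are genuinely $T\pi\Phi^1$-invariant subbundles and that each is ``Selgrade-irreducible'' in the sense that its projectivization is chain transitive and the sum is a Whitney sum. Invariance is immediate from naturality: $T\pi$ intertwines $T\Phi^1$ (acting fiberwise linearly on $\mathcal{U}\times\mathbb{R}^{n+1}$, restricted over the base $\mathcal{U}\times\,_{\mathbb{S}}L(\lambda_{i_0})^\infty$) with $T\pi\Phi^1$, because $\pi\circ\varphi^1 = \pi\varphi^1\circ\pi$ differentiates to $D\pi\varphi^1\circ T\pi = T\pi\circ D\varphi^1$ on the relevant fibers; this is exactly the content of the derivative formula in Lemma~\ref{Lemma_derivative}, which exhibits $D_s\pi\varphi^1(t,x,0,u)(v,v_{n+1})$ as $(T_{\varphi^1}\pi)$ applied to $D_s\varphi^1(t,x,0,u)(v,v_{n+1})$. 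For the chain-transitivity and directness of the decomposition, I would note that the splitting $\mathbb{R}^{n+1}=\bigoplus L(\lambda_i)^\infty \oplus \mathbb{R}(0_n,1)$-type decomposition is orthogonal (by our choice of scalar product making the $L(\lambda_i)$ pairwise orthogonal, and the last coordinate orthogonal to $\mathbb{R}^n$), hence the images under the orthogonal projection $T\pi$ remain in direct sum inside $T_s\mathbb{S}^n$; chain transitivity of each $\mathbb{P}(_{\mathbb{S}}\mathcal{V}_i)$ is inherited from chain transitivity of $\mathbb{P}(\mathcal{U}\times L(\lambda_i)^\infty)$ via the continuous surjection induced by $T\pi$, using that a continuous flow morphism maps chain transitive sets into chain transitive sets. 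Then the uniqueness part of Selgrade's theorem (Proposition~\ref{Proposition_base}) forces this to be \emph{the} Selgrade decomposition.

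For part~(ii), the Lyapunov exponents come directly from Lemma~\ref{Lemma_derivative} by estimating norms. Over $_{\mathbb{S}}\mathcal{V}_i$ with $\lambda_i\neq 0,\lambda_{i_0}$, take $(v,0)\in L(\lambda_i)^\infty$: then $v_{n+1}=0$ kills the integral term, and $D_s\pi\varphi^1(t,x,0,u)(v,0) = \|e^{At}x\|^{-1}(e^{At}v,0) - \|e^{At}x\|^{-3}(e^{At}x,0)\langle e^{At}v,e^{At}x\rangle$; since $x\in L(\lambda_{i_0})$ and $v\in L(\lambda_i)$ are in orthogonal Lyapunov spaces, $\langle e^{At}v,e^{At}x\rangle$ has growth rate $\le\max(\lambda_i,\lambda_{i_0})$ while $\|e^{At}v\|$ grows like $\lambda_i$ and $\|e^{At}x\|$ like $\lambda_{i_0}$, giving $\tfrac1t\log\|\cdot\|\to\lambda_i-\lambda_{i_0}$ (one checks the subtracted term does not exceed this rate, using orthogonality of the Lyapunov spaces). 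Over $_{\mathbb{S}}\mathcal{V}_c$, take $(w-\langle w,s\rangle s, r)$ with $w=-re(u,0)+y$, $y\in L^0$; here the vector $e^{At}v + v_{n+1}\int_0^t e^{A(t-\sigma)}Bu(\sigma)d\sigma$ becomes (up to the $T\pi$-projection at time $0$) exactly the term $e^{At}(-re(u,0)+y)+r\int_0^t e^{A(t-\sigma)}Bu(\sigma)d\sigma$ of Lemma~\ref{Lemma_growth1}, which has polynomial growth, so dividing by $\|e^{At}x\|\sim e^{\lambda_{i_0}t}$ yields exponent $-\lambda_{i_0}$. Finally, for $_{\mathbb{S}}\mathcal{V}_{i_0}$ (when $\dim L(\lambda_{i_0})>1$), take $(v,0)$ with $v\in L(\lambda_{i_0})$ orthogonal to $x$: now $\|e^{At}v\|$ and $\|e^{At}x\|$ both grow at rate $\lambda_{i_0}$, and the point is that inside the Lyapunov space $L(\lambda_{i_0})$ the \emph{relative} growth of projectivized directions is subexponential (the flow $e^{(A-\lambda_{i_0}I)t}$ restricted to $L(\lambda_{i_0})$ has all Lyapunov exponents $0$), so the quotient has exponent $0$. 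The main obstacle is the bookkeeping in this last computation: one must carefully cancel the $e^{\lambda_{i_0}t}$-scale in numerator and denominator and show the remaining ratio, including the $\|e^{At}x\|^{-3}(e^{At}x,0)\langle e^{At}v,e^{At}x\rangle$ correction term, is polynomially bounded above and below away from zero --- i.e. that $T\pi$ restricted to $\{(x,0)\}\times L(\lambda_{i_0})^\infty$ (modulo the radial line) does not introduce or destroy exponential growth. I would handle this by working with the shifted matrix $A-\lambda_{i_0}I$, noting $s=\pi(x,0)$ stays on the unit sphere under the normalized flow so $\|e^{At}x\|^{-1}e^{At}x = \pi\varphi^1(t,x,0,u)$ remains bounded, and invoking that the induced flow on $\mathbb{P}(L(\lambda_{i_0}))$ is chain transitive with zero Lyapunov exponents, which is precisely Theorem~\ref{thmprojMorse} applied within $L(\lambda_{i_0})$.
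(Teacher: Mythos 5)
Your plan follows the same route as the paper --- Lemma~\ref{Lemma_T} for the images under $T\pi$, chain transitivity of the images to identify them as Selgrade bundles, and Lemma~\ref{Lemma_derivative} for the Lyapunov exponents --- so the architecture is right. But two of the exponent calculations in part~(ii) are not yet closed.

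For $_{\mathbb{S}}\mathcal{V}_i$ with $\lambda_i\neq 0,\lambda_{i_0}$, the observation that saves the day is sharper than ``one checks the subtracted term does not exceed this rate.'' Since $L(\lambda_i)$ and $L(\lambda_{i_0})$ are both $A$-invariant and, by the chosen scalar product, orthogonal, we have $e^{At}v\in L(\lambda_i)$, $e^{At}x\in L(\lambda_{i_0})$ and hence $\langle e^{At}v,e^{At}x\rangle\equiv 0$ for all $t$. So the correction term in Lemma~\ref{Lemma_derivative} is not merely bounded --- it vanishes identically, and $D_s\pi\varphi^1(t,x,0,u)(v,0)=\|e^{At}x\|^{-1}(e^{At}v,0)$. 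Your bound ``growth rate $\le\max(\lambda_i,\lambda_{i_0})$'' is actually irrelevant and, as written, would not by itself keep the correction term from contributing or cancelling.

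The more serious gap is in $_{\mathbb{S}}\mathcal{V}_c$. You write that the relevant vector ``becomes exactly the term of Lemma~\ref{Lemma_growth1}, which has polynomial growth, so dividing by $\|e^{At}x\|$ yields $-\lambda_{i_0}$.'' But Lemma~\ref{Lemma_derivative} gives $D_s\pi\varphi^1$ as a \emph{difference} of two summands: the rescaled image of $D_s\varphi^1$ \emph{and} the radial projection correction at time $t$. Both summands separately have exponential growth rate $-\lambda_{i_0}$, and a priori their difference could grow strictly slower. The paper handles this by expanding both summands (displays (\ref{first})--(\ref{third1})), checking each piece has actual limits (not just limsups) with rate $-\lambda_{i_0}$, and then invoking the criterion in Cesari~\cite[(3.12.v)]{Cesa71} that the sum of two quantities whose logarithms have the same genuine limit retains that growth rate. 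Without some such argument --- or a direct lower bound on $\|D_s\pi\varphi^1(t,x,0,u)(v,v_{n+1})\|$ --- you have only shown $\lambda(s,u;v,v_{n+1})\le -\lambda_{i_0}$, not equality.

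Your treatment of $_{\mathbb{S}}\mathcal{V}_{i_0}$ by passing to the shifted matrix $A-\lambda_{i_0}I$ and using that the induced projective flow on $\mathbb{P}(L(\lambda_{i_0}))$ has zero Lyapunov exponents is a reasonable alternative to the paper's direct factor analysis; it would need to be fleshed out, but the idea is sound.
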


\begin{proof}
(i) Recall that $T_{s}\mathbb{S}^{n}=\left\{  s\right\}  \times\left\{
v\in\mathbb{R}^{n+1}\left\vert \left\langle v,s\right\rangle =0\right.
\right\}  $ and $T\left(  \mathbb{R}_{0}^{n}\times\mathbb{R}\right)  \subset
T\mathbb{R}_{0}^{n+1}$. The Selgrade decomposition for the linear control flow
$\Phi^{1}$ associated with the lifted control system (\ref{lift2}) is given in
(\ref{Selg_lift}). Let $s=\pi(x,0)\in\,_{\mathbb{S}}L(\lambda_{i_{0}}%
)^{\infty}$. By Lemma \ref{Lemma_T}, the surjection
\[
(\mathrm{id}_{\mathcal{U}},T\pi):\mathcal{U}\times T_{\,_{\mathbb{S}}%
L(\lambda_{i_{0}})^{\infty}}\mathbb{R}_{0}^{n+1}\rightarrow\mathcal{U}\times
T_{_{\mathbb{S}}L(\lambda_{i_{0}})^{\infty}}\mathbb{S}^{n},
\]
maps $\mathcal{U}\times L(\lambda_{i_{0}})^{\infty}\times L(\lambda
_{i})^{\infty}\subset\mathcal{U}\times T_{L(\lambda_{i_{0}})^{\infty}%
}\mathbb{R}_{0}^{n+1},i\not =0,i_{0}$, onto $\mathcal{U}\times\,_{\mathbb{S}%
}L(\lambda_{i_{0}})^{\infty}\times L(\lambda_{i})^{\infty}$. Furthermore,
$\mathcal{U}\times\,_{\mathbb{S}}L(\lambda_{i_{0}})^{\infty}\times
\mathcal{V}_{c}$ is mapped onto%
\[
_{\mathbb{S}}\mathcal{V}_{c}=\left\{  \left(  u,s,w-\left\langle w,\frac
{x}{\left\Vert x\right\Vert }\right\rangle \frac{x}{\left\Vert x\right\Vert
},r\right)  \left\vert s\in\,_{\mathbb{S}}L(\lambda_{i_{0}})^{\infty}\text{
and }(u,w,r)\in\mathcal{V}_{c}\right.  \right\}  .
\]
Finally, $\mathcal{U}\times L(\lambda_{i_{0}})^{\infty}\times L(\lambda
_{i_{0}})^{\infty}$ is mapped onto%
\begin{align*}
&  (\mathrm{id}_{\mathcal{U}},T\pi)\left(  \mathcal{U}\times L(\lambda_{i_{0}%
})^{\infty}\times L(\lambda_{i_{0}})^{\infty}\right) \\
&  =\mathcal{U}\times\left\{  (s,v-\left\langle v,\frac{x}{\left\Vert
x\right\Vert }\right\rangle \frac{x}{\left\Vert x\right\Vert },0)\left\vert
s\in\,_{\mathbb{S}}L(\lambda_{i_{0}})^{\infty}\text{ and }(v,0)\in
L(\lambda_{i_{0}})^{\infty}\right.  \right\}  .
\end{align*}
Let
\[
P_{L(\lambda_{i_{0}})^{\infty}}\mathbb{R}_{0}^{n+1}:=\bigcup
\nolimits_{(x,0)\in L(\lambda_{i_{0}})^{\infty}}P_{(x,0)}\mathbb{R}_{0}%
^{n+1}=\bigcup\nolimits_{(x,0)\in L(\lambda_{i_{0}})^{\infty}}\left\{
(x,0)\right\}  \times\mathbb{P}^{n}.
\]
In the projective bundle $\mathcal{U}\times P_{L(\lambda_{i_{0}})^{\infty}%
}\mathbb{R}_{0}^{n+1}$, the images of the Selgrade bundles $\mathcal{V}_{c}$
and $\mathcal{U}\times L(\lambda_{i})^{\infty}$ are the maximal chain
transitive sets. Their images in $P_{\,_{\mathbb{S}}L(\lambda_{i_{0}}%
)^{\infty}}\mathbb{S}^{n}$ (cf. (\ref{TandP}) are also chain transitive since
they are the continuous images of chain transitive sets in a compact metric space.

The same arguments show that the image of the Selgrade $\mathcal{U}\times$
$_{\mathbb{S}}L(\lambda_{i_{0}})^{\infty}\times L(\lambda_{i_{0}})^{\infty}$
is chain transitive. It follows that the subbundles defined in assertion (i)
are the Selgrade bundles. The kernel of $T_{(x,0)}\pi$ restricted to
$L(\lambda_{i_{0}})^{\infty}$ equals the span of $(x,0)$. Hence%
\[
\dim\,_{\mathbb{S}}\mathcal{V}_{i_{0}}=\dim\left(  T_{(x,0)}\pi\right)
L(\lambda_{i_{0}})^{\infty}=\dim L(\lambda_{i_{0}})^{\infty}-1=\dim
L(\lambda_{i_{0}})-1.
\]
The dimensions of the subbundles$\,_{\mathbb{S}}\mathcal{V}_{c}$ and
$_{\mathbb{S}}\mathcal{V}_{i},i\not =i_{0}$ are preserved under $(\mathrm{id}%
_{\mathcal{U}},T\pi)$ and%
\begin{align*}
\sum_{\lambda_{i}\not =0}\dim\,_{\mathbb{S}}\mathcal{V}_{i}+\dim
\,_{\mathbb{S}}\mathcal{V}_{c}  &  =\sum_{\lambda_{i}\not =0,\lambda_{i_{0}}%
}\dim L(\lambda_{i})+\dim L(\lambda_{i_{0}})-1+\dim L^{0}+1\\
&  =n=\dim T_{s}\mathbb{S}^{n}.
\end{align*}
(ii) By Lemma \ref{Lemma_T}, any direction in $(T_{(x,0)}\pi)L(\lambda
_{i})^{\infty}$ satisfies $(v,v_{n+1})=(v,0)\in L(\lambda_{i})^{\infty}$. Thus
Lemma \ref{Lemma_derivative} implies that%
\begin{align}
&  D_{s}\pi\varphi^{1}(t,x,0,u)(v,0)\label{4.9}\\
&  =\left\Vert e^{At}x\right\Vert ^{-1}\left(  e^{At}v,0\right)  -\left\Vert
e^{At}x\right\Vert ^{-3}\left(  e^{At}x,0\right)  \left\langle e^{At}%
v,e^{At}x\right\rangle .\nonumber
\end{align}
For $i\not =i_{0}$, the $A$-invariant subspaces $L(\lambda_{i})$ and
$L(\lambda_{i_{0}})$ are orthogonal, and hence $\left\langle e^{At}%
v,e^{At}x\right\rangle \allowbreak=0$ implying%
\[
D_{s}\pi\varphi^{1}(t,x,0,u)(v,0)=\left\Vert e^{At}x\right\Vert ^{-1}\left(
e^{At}v,0\right)  .
\]
For the Lyapunov exponent in direction $(v,0)\in L(\lambda_{i})^{\infty
},i\not =i_{0}$, this implies%
\begin{align*}
&  \lim_{t\rightarrow\pm\infty}\frac{1}{t}\log\left\Vert D_{s}\pi\varphi
^{1}(t,x,0,u)(v,0)\right\Vert =\lim_{t\rightarrow\pm\infty}\frac{1}{t}%
\log\frac{\left\Vert (e^{At}v,0)\right\Vert }{\left\Vert e^{At}x\right\Vert
}\\
&  =\lim_{t\rightarrow\pm\infty}\frac{1}{t}\log\left\Vert e^{At}v\right\Vert
-\lim_{t\rightarrow\pm\infty}\frac{1}{t}\log\left\Vert e^{At}x\right\Vert
=\lambda_{i}-\lambda_{i_{0}}.
\end{align*}
Next consider the Lyapunov exponents for $(u,s,v,0)\in(\mathrm{id}%
_{\mathcal{U}},T_{(x,0)}\pi)\mathcal{V}_{i_{0}}$. Formula (\ref{4.9}) yields%
\[
D_{s}\pi\varphi^{1}(t,x,0,u)(v,0)=\left\Vert e^{At}x\right\Vert ^{-1}\left[
\left(  e^{At}v,0\right)  -\left\Vert e^{At}x\right\Vert ^{-2}\left(
e^{At}x,0\right)  \left\langle e^{At}v,e^{At}x\right\rangle \right]  .
\]
The first factor has exponential growth $-\lambda_{i_{0}}$ for $t\rightarrow
\pm\infty$. By (\ref{HC_11}), the second factor is the projection of $\left(
e^{At}v,0\right)  \in T_{e^{At}x}\mathbb{R}_{0}^{n+1}$ to $\left(  T_{e^{At}%
x}\pi\right)  (e^{At}x)(v,0))$, which is also in $L(\lambda_{i_{0}})^{\infty}%
$.\textbf{ }Thus the exponential growth of the second factor coincides with
$\lambda_{i_{0}}$.\ It follows that%
\begin{align*}
&  \lim_{t\rightarrow\pm\infty}\frac{1}{t}\log\left\Vert D_{s}\pi\varphi
^{1}(t,x,0,u)(v,0)\right\Vert \\
&  =\lim_{t\rightarrow\pm\infty}\frac{1}{t}\log\left[  \left\Vert
e^{At}x\right\Vert ^{-1}\left\Vert \left(  e^{At}v,0\right)  -\left\Vert
e^{At}x\right\Vert ^{-2}\left(  e^{At}x,0\right)  \left\langle e^{At}%
v,e^{At}x\right\rangle \right\Vert \right] \\
&  =\lambda_{i_{0}}-\lambda_{i_{0}}=0.
\end{align*}
This proves (\ref{lambda_i0}).

Finally, we compute \, the \, Lyapunov \, exponents \, for \, $(u,s,v,v_{n+1})$ \,
$\in$ 
$(\mathrm{id}_{\mathcal{U}},T_{(x,0)}\pi)\mathcal{V}_{c}$. Lemma \ref{Lemma_T}
and equation (\ref{central}) imply
\[
(v,v_{n+1})=(w-\left\langle w,x\right\rangle x,r)\text{ for }%
(u,w,r)=(u,-re(u,0)+y,r)\in\mathcal{V}_{c}.
\]
Inserting this into formula (\ref{D_phi}), we obtain for $D_{s}\pi\varphi
^{1}(t,\pi(x,0),u)(v,v_{n+1})$ the following summands:%
\begin{align}
&  \left\Vert e^{At}x\right\Vert ^{-1}\left(  e^{At}(w-\left\langle
w,x\right\rangle x)+r\int_{0}^{t}e^{A(t-\sigma)}Bu(\sigma)d\sigma,r\right)
\nonumber\\
&  =\left\Vert e^{At}x\right\Vert ^{-1}\left(  e^{At}w+r\int_{0}%
^{t}e^{A(t-\sigma)}Bu(\sigma)d\sigma,r\right)  -\left\langle w,x\right\rangle
\left(  \frac{e^{At}x}{\left\Vert e^{At}x\right\Vert },0\right)  ,
\label{first}%
\end{align}
and%
\begin{align}
&  \left\Vert e^{At}x\right\Vert ^{-3}\left(  e^{At}x,0\right)  \left\langle
e^{At}(w-\left\langle w,x\right\rangle x)+r\int_{0}^{t}e^{A(t-\sigma
)}Bu(\sigma)d\sigma,e^{At}x\right\rangle \nonumber\\
&  =\left\Vert e^{At}x\right\Vert ^{-3}\left(  e^{At}x,0\right)  \left\langle
e^{At}w+r\int_{0}^{t}e^{A(t-\sigma)}Bu(\sigma)d\sigma,e^{At}x\right\rangle
\nonumber\\
&  \qquad-\left\Vert e^{At}x\right\Vert ^{-3}\left(  e^{At}x,0\right)
\left\langle e^{At}\left\langle w,x\right\rangle x,e^{At}x\right\rangle .
\label{second}%
\end{align}
By Lemma \ref{Lemma_growth1}, the term $e^{At}w+r\int_{0}^{t}e^{A(t-\sigma
)}Bu(\sigma)d\sigma$ has at most polynomial growth, and hence the first
summand in (\ref{first}) has exponential growth $-\lambda_{i_{0}}$.
Furthermore, the term $\left\langle w,x\right\rangle \frac{e^{At}x}{\left\Vert
e^{At}x\right\Vert }$ has constant norm. The first summand in (\ref{second})
equals%
\begin{equation}
\left\Vert e^{At}x\right\Vert ^{-1}\left(  \frac{e^{At}x}{\left\Vert
e^{At}x\right\Vert },0\right)  \left\langle e^{At}w+r\int_{0}^{t}%
e^{A(t-\sigma)}Bu(\sigma)d\sigma,\frac{e^{At}x}{\left\Vert e^{At}x\right\Vert
}\right\rangle . \label{third1}%
\end{equation}
By Lemma \ref{Lemma_growth1}, $e^{At}w+r\int_{0}^{t}e^{A(t-\sigma)}%
Bu(\sigma)d\sigma$ has at most polynomial growth and%
\[
\left\vert \left\langle e^{At}w+r\int_{0}^{t}e^{A(t-\sigma)}Bu(\sigma
)d\sigma,\frac{e^{At}x}{\left\Vert e^{At}x\right\Vert }\right\rangle
\right\vert \leq\left\Vert e^{At}w+r\int_{0}^{t}e^{A(t-\sigma)}Bu(\sigma
)d\sigma\right\Vert .
\]
It \, follows \, that \, the  \, exponential \, growth \, rate \, of \,(\ref{third1}) \, equals \,
$\lim_{t\rightarrow\pm\infty}\frac{1}{t}\log\left\Vert e^{At}x\right\Vert
^{-1}=-\lambda_{i_{0}}$. Finally, the second summand in (\ref{first}) is%
\[
\left\langle w,x\right\rangle \left(  \frac{e^{At}x}{\left\Vert e^{At}%
x\right\Vert },0\right)  \left\langle \frac{e^{At}x}{\left\Vert e^{At}%
x\right\Vert },\frac{e^{At}x}{\left\Vert e^{At}x\right\Vert }\right\rangle ,
\]
which has bounded norm.

We conclude that the term in (\ref{second}) has exponential growth
$-\lambda_{i_{0}}$. It follows that both summands (\ref{first}) and
(\ref{second}) of $D_{s}\pi\varphi^{1}(t,\pi(x,0),u)(v,v_{n+1})$ have
exponential growth rate $-\lambda_{i_{0}}$. In general, the exponential growth
of two summands with equal exponential growth rate $\lambda$ (given by the
limit superior) is equal to or less than $\lambda$. But here we actually have
limits for $t\rightarrow\pm\infty$ and also for the inverses. This implies
that the sum has exponential growth $\lambda$; cf. Cesari \cite[(3.12.v)]%
{Cesa71}. This concludes the proof.
\end{proof}

For $s\in\mathbb{S}^{n}$ and $u\in\mathcal{U}$ a direction is called stable,
if the corresponding Lyapunov exponents for $t\rightarrow\pm\infty$ are
negative and unstable if they are positive. An immediate corollary of Theorem
\ref{Theorem_LinLyap} is the following.

\begin{corollary}
\label{Corollary_stable_sub}In the situation of Theorem \ref{Theorem_LinLyap},
for $\lambda_{i}\not =0$, the subbundles $_{\mathbb{S}}\mathcal{V}_{i}%
,i>i_{0}$, and $_{\mathbb{S}}\mathcal{V}_{i},i<i_{0}$, consist for all
$u\in\mathcal{U}$ of stable and unstable directions, respectively. The
directions in $_{\mathbb{S}}\mathcal{V}_{c}$ $\ $are stable for $\lambda
_{i_{0}}>0$ and unstable for $\lambda_{i_{0}}<0$.

(i) Define%
\begin{align}
\text{for }\lambda_{i_{0}}  &  >0,\,_{\mathbb{S}}\mathcal{V}^{-}%
:=\bigoplus_{i>i_{0},\lambda_{i}\not =0}\,_{\mathbb{S}}\mathcal{V}_{i}%
\,\oplus\,_{\mathbb{S}}\mathcal{V}_{c}\text{ and }_{\mathbb{S}}\mathcal{V}%
^{+}:=\bigoplus_{i<i_{0}}\,_{\mathbb{S}}\mathcal{V}_{i},\label{V_inv1}\\
\text{for }\lambda_{i_{0}}  &  <0,\,_{\mathbb{S}}\mathcal{V}^{-}%
:=\bigoplus_{i>i_{0}}\,_{\mathbb{S}}\mathcal{V}_{i}\text{ and }_{\mathbb{S}%
}\mathcal{V}^{+}:=\,_{\mathbb{S}}\mathcal{V}_{c}\oplus\bigoplus_{i<i_{0}%
,\lambda_{i}\not =0}\,_{\mathbb{S}}\mathcal{V}_{i}. \label{V_inv2}%
\end{align}
Then%
\[
\mathcal{U}\times T_{_{\mathbb{S}}L(\lambda_{i_{0}})^{\infty}}\mathbb{S}%
^{n}=\,_{\mathbb{S}}\mathcal{V}^{-}\oplus\,_{\mathbb{S}}\mathcal{V}_{i_{0}%
}\oplus\,_{\mathbb{S}}\mathcal{V}^{+}%
\]
is a decomposition into the stable, center, and unstable subbundles.

(ii) For the stable subbundle $_{\mathbb{S}}\mathcal{V}^{-}$, the supremal
exponential growth rate $\kappa\left(  _{\mathbb{S}}\mathcal{V}^{-}\right)
:=\sup\{\left.  \lambda(u,x,v^{-})\right\vert \allowbreak(u,x,v^{-}%
)\allowbreak\in\,_{\mathbb{S}}\mathcal{V}^{-}\}$ satisfies%
\begin{align*}
\text{for }\lambda_{i_{0}}  &  >0,~\text{ }\kappa(_{\mathbb{S}}\mathcal{V}%
^{-})=\lambda_{i_{1}}-\lambda_{i_{0}}\text{ where }\lambda_{i_{1}}%
=\max\{\lambda_{i}\left\vert \lambda_{i}>0\text{ and }i\geq i_{0}\right.
\},\\
\text{for }\lambda_{i_{0}\text{ }}  &  <0,~\kappa\left(  _{\mathbb{S}%
}\mathcal{V}^{-}\right)  =\lambda_{\ell}-\lambda_{i_{0}}\text{ if }i_{0}%
<\ell\text{ and if }i_{0}=\ell\text{ then }_{\mathbb{S}}\mathcal{V}^{-}\text{
is trivial}.
\end{align*}

\end{corollary}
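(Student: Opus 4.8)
The plan is to read off everything from Theorem~\ref{Theorem_LinLyap} and the sign bookkeeping of the Lyapunov exponents. First, observe that the Lyapunov spaces are indexed so that $\lambda_1>\lambda_2>\cdots>\lambda_\ell$, and recall from Theorem~\ref{Theorem_LinLyap}(ii) that on $_{\mathbb{S}}\mathcal{V}_i$ (for $\lambda_i\neq0,\lambda_{i_0}$) the Lyapunov exponent equals $\lambda_i-\lambda_{i_0}$, on $_{\mathbb{S}}\mathcal{V}_c$ it equals $-\lambda_{i_0}$, and on $_{\mathbb{S}}\mathcal{V}_{i_0}$ (when $\dim L(\lambda_{i_0})>1$) it equals $0$. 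Since these are genuine limits as $t\to\pm\infty$ (not merely limsups), a direction is stable exactly when its exponent is negative and unstable exactly when it is positive. So the statement ``$_{\mathbb{S}}\mathcal{V}_i$ consists of stable directions for $i>i_0$'' amounts to: $i>i_0$ and $\lambda_i\neq0$ imply $\lambda_i-\lambda_{i_0}<0$, which is immediate because the ordering gives $\lambda_i<\lambda_{i_0}$; symmetrically $i<i_0$ gives $\lambda_i>\lambda_{i_0}$, hence $\lambda_i-\lambda_{i_0}>0$, i.e.\ unstable. For $_{\mathbb{S}}\mathcal{V}_c$ the exponent $-\lambda_{i_0}$ is negative precisely when $\lambda_{i_0}>0$ and positive precisely when $\lambda_{i_0}<0$; this is the claim about $_{\mathbb{S}}\mathcal{V}_c$. (Note $_{\mathbb{S}}\mathcal{V}_{i_0}$ is the center bundle, exponent $0$.)

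For part (i), I would simply regroup the Selgrade decomposition \eqref{Selg_sphere1} according to the sign of the exponents just computed. When $\lambda_{i_0}>0$: $_{\mathbb{S}}\mathcal{V}_c$ is stable, so it joins the bundles $_{\mathbb{S}}\mathcal{V}_i$ with $i>i_0$ (excluding any with $\lambda_i=0$, which cannot occur for $i>i_0$ if $\lambda_{i_0}>0$ and $0$ is a Lyapunov exponent — but in any case the center Lyapunov space has been absorbed into $\mathcal{V}_c$, so there is no separate $_{\mathbb{S}}\mathcal{V}_i$ with $\lambda_i=0$); this gives $_{\mathbb{S}}\mathcal{V}^-$, while $_{\mathbb{S}}\mathcal{V}^+=\bigoplus_{i<i_0}{}_{\mathbb{S}}\mathcal{V}_i$. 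When $\lambda_{i_0}<0$: $_{\mathbb{S}}\mathcal{V}_c$ is unstable and joins the $i<i_0$ bundles to form $_{\mathbb{S}}\mathcal{V}^+$, while $_{\mathbb{S}}\mathcal{V}^-=\bigoplus_{i>i_0}{}_{\mathbb{S}}\mathcal{V}_i$. In both cases $_{\mathbb{S}}\mathcal{V}_{i_0}$ (present iff $\dim L(\lambda_{i_0})>1$) is the center bundle. That $_{\mathbb{S}}\mathcal{V}^-\oplus{}_{\mathbb{S}}\mathcal{V}_{i_0}\oplus{}_{\mathbb{S}}\mathcal{V}^+=\mathcal{U}\times T_{_{\mathbb{S}}L(\lambda_{i_0})^\infty}\mathbb{S}^n$ is just the dimension count already recorded in the proof of Theorem~\ref{Theorem_LinLyap}(i).

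For part (ii), the supremal growth rate of a Whitney sum of subbundles, each carrying a single well-defined Lyapunov exponent, is the maximum of those exponents. When $\lambda_{i_0}>0$, $_{\mathbb{S}}\mathcal{V}^-$ is the sum of $_{\mathbb{S}}\mathcal{V}_c$ (exponent $-\lambda_{i_0}<0$) and the $_{\mathbb{S}}\mathcal{V}_i$ with $i>i_0$, $\lambda_i\neq0$ (exponents $\lambda_i-\lambda_{i_0}$); the largest such is obtained at the largest admissible $\lambda_i$, i.e.\ $\lambda_{i_1}$ with $\lambda_{i_1}=\max\{\lambda_i\mid\lambda_i>0,\ i\ge i_0\}$, giving $\kappa(_{\mathbb{S}}\mathcal{V}^-)=\lambda_{i_1}-\lambda_{i_0}$ (and this dominates $-\lambda_{i_0}$). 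When $\lambda_{i_0}<0$, $_{\mathbb{S}}\mathcal{V}^-=\bigoplus_{i>i_0}{}_{\mathbb{S}}\mathcal{V}_i$ with exponents $\lambda_i-\lambda_{i_0}$, $i>i_0$; since $\lambda_i$ is decreasing, the supremum is at $i=i_0+1$, but expressed via the smallest exponent this is $\lambda_\ell-\lambda_{i_0}$ when $i_0<\ell$ — wait, one must be careful: the maximum over $i>i_0$ of $\lambda_i-\lambda_{i_0}$ is attained at the \emph{smallest} such index, not the largest, so the formula ``$\lambda_\ell-\lambda_{i_0}$'' requires interpreting $\lambda_\ell$ correctly; I would double-check the index convention here, but the point is that when $i_0=\ell$ the sum is empty, hence $_{\mathbb{S}}\mathcal{V}^-$ is trivial. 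The main (minor) obstacle is precisely this indexing sanity check and confirming that no spurious zero-exponent bundle $_{\mathbb{S}}\mathcal{V}_i$ sits on the ``wrong'' side of $_{\mathbb{S}}\mathcal{V}_c$; everything else is bookkeeping on top of Theorem~\ref{Theorem_LinLyap}.
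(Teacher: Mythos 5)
Your route is the paper's route: read the Lyapunov exponents $\lambda_i-\lambda_{i_0}$, $-\lambda_{i_0}$, and $0$ from Theorem~\ref{Theorem_LinLyap}(ii), regroup the Selgrade bundles by sign to get the decomposition in (i), and take the maximum over the constituent exponents for (ii). There is no hidden idea beyond this bookkeeping, and your regrouping in part~(i) is correct.

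Your hesitation over $\lambda_\ell-\lambda_{i_0}$ is not a distraction but a genuine finding, and you should have pressed it to a conclusion rather than deferring to ``the index convention.'' Since $\lambda_1>\cdots>\lambda_\ell$ is strictly decreasing, for $\lambda_{i_0}<0$ the supremum $\kappa(_{\mathbb{S}}\mathcal{V}^-)=\sup_{i>i_0}(\lambda_i-\lambda_{i_0})$ is attained at the \emph{smallest} eligible index $i=i_0+1$ and equals $\lambda_{i_0+1}-\lambda_{i_0}$; the paper's expression $\lambda_\ell-\lambda_{i_0}$ is the infimum rather than the supremum, and the paper's proof merely asserts it. Relatedly, in the $\lambda_{i_0}>0$ formula the constraint defining $\lambda_{i_1}$ must read $i>i_0$ rather than $i\geq i_0$ (with $i\geq i_0$ one gets $\lambda_{i_1}=\lambda_{i_0}$ and hence $\kappa=0$, contradicting stability), and the degenerate case in which $\{\,i>i_0:\lambda_i>0\,\}=\varnothing$, where $\kappa(_{\mathbb{S}}\mathcal{V}^-)=-\lambda_{i_0}$, appears in the paper's proof but not in its statement nor in your write-up. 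You transcribed both of these from the paper without comment; the first is precisely the kind of slip you caught in the $\lambda_{i_0}<0$ branch, so the sanity check should have been applied uniformly.
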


\begin{proof}
Recall that $\lambda_{1}>\cdots>\lambda_{\ell}$. Theorem \ref{Theorem_LinLyap}%
(iii) shows that%
\begin{equation}
\kappa(\,_{\mathbb{S}}\mathcal{V}_{i})=\lambda_{i}-\lambda_{i_{0}}<0\text{ for
}0\not =\lambda_{i}<\lambda_{i_{0}}\text{, i.e., }i>i_{0}. \label{kappa1}%
\end{equation}
For \, $\lambda_{i_{0}}>0$, \, it\, follows \, that $\kappa(_{\mathbb{S}}\mathcal{V}%
_{c})=-\lambda_{i_{0}}<0$. This implies that $_{\mathbb{S}}\mathcal{V}^{-}$
defined \, in \, (\ref{V_inv1})  \, satisfies \, $\kappa(_{\mathbb{S}}\mathcal{V}%
^{-})$ \, $=$ \, $\lambda_{i_{1}}-\lambda_{i_{0}}<0$, \, where \, $\lambda_{i_{1}}$
$:=$ $\max\{\lambda_{\iota}\left\vert \lambda_{i}>0\text{ and }i\geq i_{0}\right.
\}$. If $\lambda_{i}<0$ for all $i>i_{0}$ then $\kappa(_{\mathbb{S}%
}\mathcal{V}^{-})=-\lambda_{i_{0}}.$

For $\lambda_{i_{0}}<0$ it holds that $\kappa(_{\mathbb{S}}\mathcal{V}%
_{c})=-\lambda_{i_{0}}>0$ and (\ref{kappa1}) implies that $_{\mathbb{S}%
}\mathcal{V}^{-}$ defined in (\ref{V_inv2}) satisfies $\kappa(_{\mathbb{S}%
}\mathcal{V}^{-})=\lambda_{\ell}-\lambda_{i_{0}}<0$ if $i_{0}<\ell$. Hence the
assertions follow.
\end{proof}

\begin{remark}
Note that $\lambda_{i}-\lambda_{i_{0}}\not =0$ for $i\not =i_{0}$ and
$\lambda_{i_{0}}\not =0$. It follows that the linear flow above is uniformly
hyperbolic, if $\dim L(\lambda_{i_{0}})=1$, i.e., if $\lambda_{i_{0}}$ is a
simple real eigenvalue of $A$.
\end{remark}

\begin{remark}
The Selgrade bundles $_{\mathbb{S}}\mathcal{V}_{i}$, $_{\mathbb{S}}%
\mathcal{V}_{c}$, and$\,_{\mathbb{S}}\mathcal{V}_{i_{0}}$ actually are
Sacker-Sell bundles; cf. Johnson, Palmer, and Sell \cite{JoPS87}. The
Sacker-Sell spectral intervals degenerate to points.
\end{remark}

\section{Invariant manifolds\label{Section5}}

In this section we determine invariant manifolds on the Poincar\'{e} sphere
$\mathbb{S}^{n}$ and on $\mathbb{R}^{n}$. In order to determine the behavior
of linear control systems of the form (\ref{linear}) \textquotedblleft near
infinity\textquotedblright\ we analyze invariant manifolds for points on the
equator $\mathbb{S}^{n,0}$ of $\mathbb{S}^{n}$. For the linearization
$T\pi\Phi^{1}$ about the invariant sets $_{\mathbb{S}}L(\lambda_{i_{0}%
})^{\infty}\subset\mathbb{S}^{n,0}$, Theorem \ref{Theorem_LinLyap} determines
the Selgrade bundles and their spectra.

We need stable manifolds for the nonlinear flow $\pi\Phi^{1}$ on
$\mathcal{U}\times\mathbb{S}^{n}$. From the vast literature on invariant
manifolds, we refer to the following related versions of stable manifold
theorems. Johnson \cite{John87} proved results on invariant manifolds
tangential to Sacker-Sell bundles. His main result \cite[Theorem 2.25]{John87}
concerns differential equations in $\mathbb{R}^{n}$ which are embedded in a
flow where the base space is of Bebutov type. Chow and Yi \cite{ChowYi}
consider differential equations in $\mathbb{R}^{n}$ with a base space which is
a compact and connected manifold. Alternatively, also invariant manifold
theorems for (single) Carath\'{e}odory differential equations are presented in
Aulbach and Wanner \cite{AulW} .

While the methods developed in these (and other) papers presumably can be
adapted so that they apply to our situation, the results are not immediately
applicable. Instead we will use a local stable manifold theorem in Colonius
and Kliemann \cite[Theorem 6.4.3]{ColK00}, which is based on an abstract
stable manifold theorem due to Bronstein and Chernii \cite{BroC} (cf.
\cite[Theorem 5.6.1]{ColK00} for a detailed proof and the monograph by
Bronstein and Kopanskii \cite{BroK94} for abstract invariant manifold theory).
We need the following notational preliminaries.

Consider a control-affine system on a Riemannian manifold in the form%
\begin{equation}
\dot{x}(t)=X_{0}(x(t))+\sum_{i=1}^{m}u_{i}(t)X_{i}(x(t)),u\in\mathcal{U},
\label{nonlinear}%
\end{equation}
with smooth vector fields $X_{i},i=0,1,\ldots,m$. We assume that for all
initial conditions $x(0)=x_{0}\in M$ and all controls $u\in\mathcal{U}$
(unique) solutions $\varphi(t,x_{0},u),t\in\mathbb{R}$, exist. The linearized
system on the tangent bundle $TM$ has the form%
\begin{equation}
\frac{d}{dt}Tx(t)=TX_{0}(Tx(t))+\sum_{i=1}^{m}u_{i}(t)TX_{i}(Tx(t)),
\label{linearized}%
\end{equation}
where for a vector field $X$ on $M$ its linearization is denoted by $TX$. The
control flow $\Phi_{t}(u,x)=(u(t+\cdot),\varphi(t,x,u))$ on $\mathcal{U}\times
M$ for (\ref{nonlinear}) can be linearized and yields the control flow $T\Phi$
on $\mathcal{U}\times TM$ for (\ref{linearized}).

The following result is a minor modification of a local stable manifold
theorem on Riemannian manifolds presented in \cite[Theorem 6.4.9]{ColK00} (a
mistake in the formulation is corrected); cf. also the local stable manifold
theorem \cite[Theorem 6.4.3]{ColK00} on $\mathbb{R}^{n}$. Instead of the
compact closure of a control set $D$ in $M$, we consider an arbitrary compact
set $K\subset M$ and define the lift of $K$ to $\mathcal{U}\times M$ by%
\[
\mathcal{K}:=\{(u,x)\in\mathcal{U}\times M\left\vert \varphi(t,x,u)\in K\text{
for all }t\in\mathbb{R}\right.  \}.
\]
We will assume that $\mathcal{K}$ is nonvoid and hence a minimal compact
invariant set for the control flow $\Phi$, and that the linearized flow
$T\Phi$ restricted to the vector bundle%
\[
\mathcal{V}_{\mathcal{K}}=\{(u,x,v)\in\mathcal{U}\times TM\left\vert
(u,x)\in\mathcal{K}\right.  \}\rightarrow\mathcal{K}%
\]
admits the following decomposition into invariant subbundles,%
\begin{equation}
\mathcal{V}_{\mathcal{K}}=\mathcal{V}^{-}%
{\textstyle\bigoplus}
\mathcal{V}^{+}. \label{BILIVEC:decomp1}%
\end{equation}
Here $\mathcal{V}^{-}$ and $\mathcal{V}^{+}$ are exponentially separated
meaning that there are constants $c_{0}>0$ and $\varepsilon_{0}>0$ with
\begin{equation}
\left\Vert T\Phi_{t}(u,x,v^{-})\right\Vert \,\leq c_{0}e^{-\varepsilon_{0}%
t}\,\left\Vert T\Phi_{t}(u,x,v^{+})\right\Vert \text{ for }t\geq0,
\label{BILIVEC:expo}%
\end{equation}
for all $(u,x,v^{-})\in\mathcal{V}^{-},$ $(u,x,v^{+})\in\mathcal{V}^{+}$ with
$\left\Vert v^{-}\right\Vert =\left\Vert v^{+}\right\Vert =1$, and the
subbundle \, $\mathcal{V}^{-}$ \, is \, stable, \, i.e., \, the \, Lyapunov \, exponents \,
$\lambda(u,x,v):=\lim\sup_{t\rightarrow\infty}\frac{1}{t}\log\left\Vert
D_{x}\varphi(t,x,u)v\right\Vert $ satisfy%
\begin{equation}
\kappa(\mathcal{V}^{-}):=\sup\left\{  \lambda(u,x,v^{-})\left\vert
(u,x,v^{-})\in\mathcal{V}^{-}\right.  \right\}  <0.
\label{BILIVEC:stablebundle}%
\end{equation}
Note that we do not assume hyperbolicity; that is, we allow that
$\mathcal{V}^{+}$ includes points with nonpositive Lyapunov exponents. For the
Riemannian manifold $M$, the exponential map $\exp$ yields a map
$(x,v)\mapsto\left(  x,\exp(x,v)\right)  $ from a neighborhood of the zero
section in $TM$ to $M\times M$. Recall the following definitions (cf. e.g.
Husemoller \cite[p. 15]{Huse75}). A bundle is a continuous map $\pi
:E\rightarrow B$, where $E$ and $B$ are metric spaces. Let $\pi:E\rightarrow
B$ and $\pi^{\prime}:E^{\prime}\rightarrow B^{\prime}$ be two bundles. A
bundle morphism $F=(g,h):E\rightarrow E^{\prime}$ is a pair of continuous maps
$g:E\rightarrow E^{\prime}$ and $h:B\rightarrow B^{\prime}$ such that
$\pi^{\prime}\circ g=h\circ\pi$. A bundle morphism is a bundle isomorphism if
there exists an inverse bundle morphism.

\begin{theorem}
\label{Theorem_LSMRiemann}Consider control system (\ref{nonlinear}) on $M$
with associated control flow $\Phi$ on $\mathcal{U}\times M$ and let $K\subset
M$ be compact with lift $\mathcal{K}$ to $\mathcal{U}\times M$. Suppose for
the linearized control system (\ref{linearized}) that the associated
linearized control flow $T\Phi$ restricted to the vector bundle $\mathcal{V}%
_{\mathcal{K}}$ admits the decomposition (\ref{BILIVEC:decomp1}),
(\ref{BILIVEC:expo}) into subbundles and the subbundle $\mathcal{V}^{-}$
satisfies stability condition (\ref{BILIVEC:stablebundle}). Then there are
$\delta>0$ and a map
\[
S^{-}:\left\{  (u,x,v^{-})\in\mathcal{V}^{-}\left\vert \left\Vert
v^{-}\right\Vert <\delta\right.  \right\}  \rightarrow\mathcal{K}\times M,
\]
which is a bundle isomorphism onto its image $\mathcal{W}_{loc}^{-}%
:=\mathrm{im\,}S^{-}$ with the following properties:

(i) Every $(u,x_{0},x)\in\mathcal{W}_{loc}^{-}$ satisfies
\[
\lim_{t\rightarrow\infty}e^{-\alpha t}d(\varphi(t,x,u),\varphi(t,x_{0}%
,u))=0\text{\ for\ every\ }\alpha\in\left(  \kappa(_{\mathbb{S}}%
\mathcal{V}^{-}),0\right)  ,
\]
where $d$ denotes the Riemannian distance on $M$. The set $\mathcal{W}%
_{loc}^{-}\subset\mathcal{K}\times M$ is called a local stable manifold
corresponding to the stable subbundle $\mathcal{V}^{-}$.

(ii) For every $(u,x_{0})\in\mathcal{K}$, define the local stable manifold for
$(u,x_{0})$ by
\[
\mathcal{W}_{loc}^{-}(u,x_{0}):=\left\{  x\in M\left\vert (u,x_{0}%
,x)\in\mathcal{W}_{loc}^{-}\right.  \right\}  \subset M.
\]
Then the topological dimension of $\mathcal{W}_{loc}^{-}(u,x_{0})$ equals the
dimension of $\mathcal{V}^{-}$.

(iii) The local stable manifold $\mathcal{W}^{-}$ is positively invariant
under the control flow $\Phi$, i.e., for $x\in\mathcal{W}_{loc}^{-}(u,x_{0})$
it follows that $\varphi(t,x,u)\in\mathcal{W}_{loc}^{-}(u(t+\cdot
),\varphi(t,x_{0},u))$\ for\ all$\;t\geq0$.

(iv) The distance of the subbundle $\mathcal{W}_{loc}^{-}$ to $\mathcal{V}%
^{-}$ can be made arbitrarily small in the following Lipschitz sense by
choosing $\delta>0$ small: For all $h>0$ there is $\delta>0$ such that
$\mathcal{W}^{-}$ is contained in the set $C(\mathcal{V}^{-},h)$ of angle $h$
around $\mathcal{V}^{-}$ given by

 \[   C(\mathcal{V}^{-},h)   \]
   \[  := \left\{  (u,x,\exp(x,v^{+}+v^{-}))\in\mathcal{K}\times
M\left\vert (u,x,v^{\pm})\in\mathcal{V}^{\pm},\left\Vert v^{+}\right\Vert \leq
h\left\Vert v^{-}\right\Vert \right.  \right\}  . \]

\end{theorem}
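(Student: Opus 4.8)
The plan is to deduce the statement from the abstract local stable manifold theorem for nonlinear flows on vector bundles over a compact base that underlies \cite[Theorem 6.4.3]{ColK00} (itself a consequence of Bronstein and Chernii \cite{BroC}), by coordinatizing a neighborhood of the minimal compact invariant set $\mathcal{K}$ in $\mathcal{U}\times M$ with the Riemannian exponential map. Since $\mathcal{K}$ is compact, its projection $K\subset M$ is compact, so there is a uniform radius $\rho>0$ such that $\exp_{x_{0}}$ is a diffeomorphism on $\{v\in T_{x_{0}}M:\left\Vert v\right\Vert <\rho\}$ for every $x_{0}\in K$. For $(u,x_{0})\in\mathcal{K}$ and small $v\in T_{x_{0}}M$ I would put $G_{t}(u,x_{0},v):=\exp_{\varphi(t,x_{0},u)}^{-1}(\varphi(t,\exp_{x_{0}}v,u))$, defined as long as $\varphi(t,\exp_{x_{0}}v,u)$ remains in the injectivity neighborhood of $\varphi(t,x_{0},u)$, so that $\widetilde\Phi_{t}(u,x_{0},v):=(\Phi_{t}(u,x_{0}),G_{t}(u,x_{0},v))$ is a locally defined flow near the zero section of $\mathcal{V}_{\mathcal{K}}\to\mathcal{K}$. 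The point of the construction is that $\widetilde\Phi$ fixes the zero section, $G_{t}(u,x_{0},0)=0$, and since $d(\exp_{x_{0}})_{0}=\mathrm{id}$ its fiberwise linearization along the zero section is precisely the linearized control flow $T\Phi$ of (\ref{linearized}); thus $\widetilde\Phi$ is exactly a nonlinear perturbation of $T\Phi$ of the kind the abstract theorem handles.

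Next I would feed the hypotheses into that theorem: the assumed splitting $\mathcal{V}_{\mathcal{K}}=\mathcal{V}^{-}\oplus\mathcal{V}^{+}$ with exponential separation (\ref{BILIVEC:expo}) and stability $\kappa(\mathcal{V}^{-})<0$ from (\ref{BILIVEC:stablebundle}) are exactly its input. It then yields $\delta>0$ and a continuous fiber map $\sigma$ from the $\delta$-disk subbundle of $\mathcal{V}^{-}$ into $\mathcal{V}^{+}$, vanishing to first order along the zero section, whose graph $\widetilde{\mathcal{W}}^{-}$ is positively $\widetilde\Phi$-invariant and is characterized by decay of the fiber component at every rate $\alpha\in(\kappa(\mathcal{V}^{-}),0)$. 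Transporting back, $S^{-}(u,x_{0},v^{-}):=(u,x_{0},\exp_{x_{0}}(v^{-}+\sigma(u,x_{0},v^{-})))$ is a bundle isomorphism onto the desired set $\mathcal{W}_{loc}^{-}$ because $\exp_{x_{0}}$ is a diffeomorphism on the $\rho$-ball.

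The rest is bookkeeping. Property (ii) is immediate, since $S^{-}$ is a fiberwise diffeomorphism and the fiber of its domain has dimension $\dim\mathcal{V}^{-}$. Property (iii) is the positive $\widetilde\Phi$-invariance of $\widetilde{\mathcal{W}}^{-}$ read back through $\exp$, using the cocycle identity $\varphi(t+s,x,u)=\varphi(s,\varphi(t,x,u),u(t+\cdot))$. For (i) I would use that over the compact set $K$ the Riemannian distance and the exponential-chart norm are uniformly bi-Lipschitz equivalent, so the $\alpha$-decay of $G_{t}(u,x_{0},v)$ on $\widetilde{\mathcal{W}}^{-}$ becomes the stated limit for $d(\varphi(t,x,u),\varphi(t,x_{0},u))$. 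Property (iv) follows from the tangency of $\widetilde{\mathcal{W}}^{-}$ to $\mathcal{V}^{-}$ along the zero section, which gives $\left\Vert\sigma(u,x_{0},v^{-})\right\Vert/\left\Vert v^{-}\right\Vert\to0$ uniformly as $\left\Vert v^{-}\right\Vert\to0$; shrinking $\delta$ then forces $\mathcal{W}_{loc}^{-}\subset C(\mathcal{V}^{-},h)$. The only point where I depart from \cite[Theorem 6.4.9]{ColK00} is to record that the domain of $S^{-}$ must be the $\delta$-disk subbundle of $\mathcal{V}^{-}$ and its target $\mathcal{K}\times M$ rather than $\mathcal{U}\times M$.

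I expect the main obstacle to be the first step: verifying that the exponential-map coordinatization really produces a \emph{continuous} bundle flow $\widetilde\Phi$ whose fiberwise linearization at the zero section equals $T\Phi$, \emph{uniformly} over the compact base $\mathcal{K}$, and that this flow fits the precise hypotheses (continuity of the fiber maps, the cocycle property, smallness of the nonlinearity relative to the exponential gap) required by the abstract theorem. This combines the $C^{\infty}$-dependence of $x\mapsto\varphi(t,x,u)$ for fixed $u$ with joint continuity in $(u,x)$ for the weak$^{\ast}$ topology on $\mathcal{U}$ and uniform bounds on the injectivity radius and the derivatives of $\exp$ over $K$; once that is in place, invoking the abstract theorem and translating its conclusions back through $\exp$ is routine.
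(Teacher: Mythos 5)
Your proposal is correct and essentially reconstructs the paper's approach: the paper gives no independent proof, stating the result as a minor modification of \cite[Theorem 6.4.9]{ColK00}, which rests on the abstract Bronstein--Chernii stable manifold theorem applied after exponential-chart coordinatization over a compact base. Your outline --- uniform injectivity radius over $K$, $\exp$-conjugation to a locally defined bundle flow whose fiberwise linearization at the zero section is $T\Phi$, invocation of the abstract theorem using the splitting and exponential separation, and transport of the graph, its tangency, decay rate, and positive invariance back through $\exp$ --- is exactly that argument, with the only substantive change (as you note) being the replacement of the closure of a control set by an arbitrary compact $K$.
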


\begin{remark}
\label{Remark_unstable}Using time reversal, one can also obtain a result on
local unstable manifolds, cf. \cite[Remark 6.4.5]{ColK00}.
\end{remark}

We will apply Theorem \ref{Theorem_LSMRiemann} to control system
(\ref{S^n})\ on the Poincar\'{e} sphere $M=\mathbb{S}^{n}$, the associated
control flow $\pi\Phi^{1}$ on $\mathcal{U}\times\mathbb{S}^{n}$, and its
linearization $T\pi\Phi^{1}$ on $\mathcal{U}\times TS^{n}$. The set
$K:=\,_{\mathbb{S}}L(\lambda_{i_{0}})^{\infty}\subset\mathbb{S}^{n,0}$ is
compact and its lift to $\mathcal{U}\times\mathbb{S}^{n}$ is given by
$\mathcal{U}\times\,_{\mathbb{S}}L(\lambda_{i_{0}})^{\infty}$ since on the
equator $\mathbb{S}^{n,0}$ the controls $u$ act trivially. Observe that this
also implies that $\pi\varphi^{1}(t,s,u)=\pi\varphi^{1}(t,s,0)$ for
$s\in\mathbb{S}^{n,0}$. Recall that Corollary \ref{Corollary_stable_sub}
presents decompositions with a stable subbundle $_{\mathbb{S}}\mathcal{V}^{-}$
in the form%
\begin{equation}
\mathcal{V}_{_{\mathbb{S}}L(\lambda_{i_{0}})^{\infty}}=\mathcal{U}\times
T_{_{\mathbb{S}}L(\lambda_{i_{0}})^{\infty}}\mathbb{S}^{n}=\,_{\mathbb{S}%
}\mathcal{V}^{-}\oplus\,_{\mathbb{S}}\mathcal{V}^{+}. \label{decomposition2}%
\end{equation}
We obtain the following corollary to Theorem \ref{Theorem_LSMRiemann}.

\begin{corollary}
\label{Corollary_stable_man}Consider control system (\ref{S^n}) on the
Poincar\'{e} sphere $\mathbb{S}^{n}$ and the associated control flow $\pi
\Phi^{1}$ on $\mathcal{U}\times\mathbb{S}^{n}$. For the compact set
$_{\mathbb{S}}L(\lambda_{i_{0}})^{\infty}\subset\mathbb{S}^{n,0}%
,\lambda_{i_{0}}\not =0$, and the stable subbundle $_{\mathbb{S}}%
\mathcal{V}^{-}$ in (\ref{decomposition2}), there are $\delta>0$ and a map
\[
S^{-}:\left\{  (u,s,v,v_{n+1})\in\,_{\mathbb{S}}\mathcal{V}^{-}\left\vert
\left\Vert (v,v_{n+1})\right\Vert <\delta\right.  \right\}  \rightarrow
\mathcal{U}\times\,_{\mathbb{S}}L(\lambda_{i_{0}})^{\infty}\times
\mathbb{S}^{n},
\]
which is a bundle isomorphism onto its image $\mathcal{W}_{loc}^{-}%
:=\mathrm{im\,}S^{-}$, with the following properties:

(i) Every $(u,s_{0},s)\in\mathcal{W}_{loc}^{-}$ satisfies
\begin{equation}
\lim_{t\rightarrow\infty}e^{-\alpha t}d(\pi\varphi^{1}(t,s,u),\pi\varphi
^{1}(t,s_{0},u))=0\text{\ for\ every}\mathrm{\;}\alpha\in\left(
\kappa(_{\mathbb{S}}\mathcal{V}^{-}),0\right)  , \label{alpha}%
\end{equation}
where $d$ denotes the Riemannian distance on $\mathbb{S}^{n}$. The set
$\mathcal{W}_{loc}^{-}\subset\mathcal{U}\times\,_{\mathbb{S}}L(\lambda_{i_{0}%
})^{\infty}\times\mathbb{S}^{n}$ is a local stable manifold corresponding to
the stable subbundle $_{\mathbb{S}}\mathcal{V}^{-}$.

(ii) For every $(u,s_{0})\in\,_{\mathbb{S}}L(\lambda_{i_{0}})^{\infty}$ the
local stable manifold for $(u,s_{0})$ defined by
\[
\mathcal{W}_{loc}^{-}(u,s_{0}):=\left\{  s\in\mathbb{S}^{n}\left\vert
(u,s_{0},s)\in\mathcal{W}_{loc}^{-}\right.  \right\}  \subset\mathbb{S}^{n}%
\]
has topological dimension equal to the dimension of $_{\mathbb{S}}%
\mathcal{V}^{-}$.

(iii) The local stable manifold $\mathcal{W}_{loc}^{-}$ is positively
invariant under the control flow $\pi\Phi^{1}$, i.e., for $s\in\mathcal{W}%
_{loc}^{-}(u,s_{0})$ it holds that $\pi\varphi^{1}(t,s,u)\in\mathcal{W}%
_{loc}^{-}(u(t+\cdot),\pi\varphi^{1}(t,s_{0},0))$\ for\ all$\;t\geq0$.

(iv) The distance of the subbundle $\mathcal{W}_{loc}^{-}$ to $_{\mathbb{S}%
}\mathcal{V}^{-}$ can be made arbitrarily small by choosing $\delta>0$ small:
For all $h>0$ there is $\delta>0$ such that $\mathcal{W}_{loc}^{-}$ is
contained in the set $C(_{\mathbb{S}}\mathcal{V}^{-},h)$ of angle $h$ around
$_{\mathbb{S}}\mathcal{V}^{-}$ given by
\[
C(_{\mathbb{S}}\mathcal{V}^{-},h)=\left\{  (u,s,\exp(s,v^{+}+v^{-}%
))\in\mathcal{U}\times\,_{\mathbb{S}}L(\lambda_{i_{0}})^{\infty}%
\times\mathbb{S}^{n}\left\vert
\begin{array}
[c]{c}%
(u,s,v^{\pm})\in\mathcal{V}^{\pm},\\
\left\Vert v^{+}\right\Vert \,\leq h\left\Vert v^{-}\right\Vert
\end{array}
\right.  \right\}  .
\]

\end{corollary}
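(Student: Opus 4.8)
The plan is to obtain Corollary~\ref{Corollary_stable_man} as a direct application of Theorem~\ref{Theorem_LSMRiemann} with the manifold $M = \mathbb{S}^n$, the control system~(\ref{S^n}), the control flow $\pi\Phi^1$, its linearization $T\pi\Phi^1$, and the compact set $K := \,_{\mathbb{S}}L(\lambda_{i_0})^\infty \subset \mathbb{S}^{n,0}$. First I would verify the hypotheses of Theorem~\ref{Theorem_LSMRiemann}. The set $K$ is compact as a closed subset of the compact equator $\mathbb{S}^{n,0}$, and since on the equator the controls act trivially (the hyperplane $\mathbb{R}^n\times\{0\}$ carries only the flow $\dot x = Ax$), the lift of $K$ to $\mathcal{U}\times\mathbb{S}^n$ is exactly $\mathcal{K} = \mathcal{U}\times\,_{\mathbb{S}}L(\lambda_{i_0})^\infty$, which is nonvoid and, by Proposition~\ref{Proposition_base}, a compact invariant (indeed chain transitive) set for $\pi\Phi^1$. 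The restriction of the linearized flow $T\pi\Phi^1$ to $\mathcal{V}_{\mathcal{K}} = \mathcal{U}\times T_{\,_{\mathbb{S}}L(\lambda_{i_0})^\infty}\mathbb{S}^n$ is precisely the flow analyzed in Proposition~\ref{Proposition_base} and Theorem~\ref{Theorem_LinLyap}.

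Next I would produce the required decomposition~(\ref{BILIVEC:decomp1})--(\ref{BILIVEC:expo}). Corollary~\ref{Corollary_stable_sub} already furnishes the splitting $\mathcal{U}\times T_{\,_{\mathbb{S}}L(\lambda_{i_0})^\infty}\mathbb{S}^n = \,_{\mathbb{S}}\mathcal{V}^-\oplus\,_{\mathbb{S}}\mathcal{V}^+$ of~(\ref{decomposition2}), where $_{\mathbb{S}}\mathcal{V}^+$ absorbs the center bundle $_{\mathbb{S}}\mathcal{V}_{i_0}$ (so we do \emph{not} assume hyperbolicity, exactly as permitted by Theorem~\ref{Theorem_LSMRiemann}). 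By Theorem~\ref{Theorem_LinLyap}(ii) the Lyapunov exponents on $_{\mathbb{S}}\mathcal{V}^-$ are all strictly negative and $\kappa(_{\mathbb{S}}\mathcal{V}^-) < 0$, giving~(\ref{BILIVEC:stablebundle}); and since all Lyapunov exponents are attained as genuine two-sided limits (the Selgrade bundles here are Sacker--Sell bundles with one-point spectral intervals, as noted in the remark following Corollary~\ref{Corollary_stable_sub}), the spectral gap between $_{\mathbb{S}}\mathcal{V}^-$ and $_{\mathbb{S}}\mathcal{V}^+$ yields the exponential separation~(\ref{BILIVEC:expo}) with suitable constants $c_0,\varepsilon_0>0$. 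At this point all hypotheses of Theorem~\ref{Theorem_LSMRiemann} are met, so the theorem provides $\delta>0$ and the bundle isomorphism $S^-$ onto a local stable manifold $\mathcal{W}_{loc}^-$, and assertions~(i)--(iv) of the corollary are simply the specializations of assertions~(i)--(iv) of Theorem~\ref{Theorem_LSMRiemann} to this setting, with $\varphi$ replaced by $\pi\varphi^1$ and $d$ the Riemannian distance on $\mathbb{S}^n$ fixed in Section~\ref{Section4}. In~(iii) one uses additionally that $\pi\varphi^1(t,s_0,u) = \pi\varphi^1(t,s_0,0)$ for $s_0\in\mathbb{S}^{n,0}$, which is why the base point evolves under the control-free flow.

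The only genuinely non-routine point is the verification of the exponential separation~(\ref{BILIVEC:expo}), i.e.\ that the pointwise Lyapunov-exponent gap supplied by Theorem~\ref{Theorem_LinLyap} upgrades to a \emph{uniform} exponential estimate over the compact base $\mathcal{U}\times\,_{\mathbb{S}}L(\lambda_{i_0})^\infty$ with constants independent of $(u,s)$ and of the unit vectors $v^\pm$. This is where the Sacker--Sell / Selgrade structure is essential: because the base is compact and chain transitive and the Selgrade bundles are spectrally isolated (Selgrade's theorem together with the fact, recorded after Corollary~\ref{Corollary_stable_sub}, that these are Sacker--Sell bundles with degenerate spectral intervals), the Sacker--Sell spectral theory gives exactly the uniform exponential dichotomy between subbundles with disjoint spectra. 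I would invoke this to justify~(\ref{BILIVEC:expo}); everything else is bookkeeping, i.e.\ matching the objects in Theorem~\ref{Theorem_LSMRiemann} with those constructed in Sections~\ref{Section3} and~\ref{Section4} and copying over the four conclusions.
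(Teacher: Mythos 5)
Your proposal matches the paper's approach: Corollary~\ref{Corollary_stable_man} is obtained by specializing Theorem~\ref{Theorem_LSMRiemann} to $M=\mathbb{S}^n$, $K=\,_{\mathbb{S}}L(\lambda_{i_0})^\infty$, the control flow $\pi\Phi^1$, and the decomposition~(\ref{decomposition2}) supplied by Corollary~\ref{Corollary_stable_sub}, with $_{\mathbb{S}}\mathcal{V}^+$ absorbing the center bundle $_{\mathbb{S}}\mathcal{V}_{i_0}$ exactly as you say. You are in fact more explicit than the paper in flagging and justifying the exponential separation hypothesis~(\ref{BILIVEC:expo}) via the Sacker--Sell structure of the Selgrade bundles, which the paper leaves implicit; that added care is welcome but does not change the argument.
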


Note that for the sphere $\mathbb{S}^{n}$ the exponential map is
$\exp:T\mathbb{S}^{n}\rightarrow\mathbb{S}^{n}\times\mathbb{S}^{n}%
:(s,v)\mapsto(s,\exp(s,v))=\left(  s,\frac{s+v}{\left\Vert s+v\right\Vert
}\right)  .$

\begin{remark}
We can apply Theorem \ref{Theorem_LSMRiemann} also to the flow on
$\mathbb{S}^{n,0}$, which has the form $(\pi\psi(t,s),0),t\in\mathbb{R}%
,s\in\mathbb{S}^{n,0}\simeq\mathbb{S}^{n-1}\times\{0\}$; here $\pi$ denotes
the projection $\pi:\mathbb{R}_{0}^{n}\rightarrow\mathbb{S}^{n-1}$. When the
base space of the linearized flow is restricted to $_{\mathbb{S}}%
L(\lambda_{i_{0}})^{\infty}$ and $\lambda_{i_{0}}<0$, the stable subbundle is
\[
\bigoplus_{i>i_{0}}T_{_{\mathbb{S}}L(\lambda_{i_{0}})^{\infty}}(T\pi
)\,L(\lambda_{i})^{\infty}=\,_{\mathbb{S}}L(\lambda_{i_{0}})^{\infty}%
\times\bigoplus_{i>i_{0}}L(\lambda_{i})^{\infty}.
\]
One obtains a corresponding stable manifold in $\mathbb{S}^{n,0}$. If
$\lambda_{i_{0}}>0$ one has to add the subbundle $(T\pi)\,L(\lambda_{i_{0}%
})^{\infty}$, which has dimension $\dim L(\lambda_{i_{0}})-1$.
\end{remark}

Consider $(u,s_{0},s)\in\mathcal{W}_{loc}^{-}\subset\mathcal{U}\times
\mathbb{S}^{n,0}\times\mathbb{S}^{n}$. Here only the values $u(t),t\geq0$, are
relevant. We can extend $\mathcal{W}_{loc}^{-}$ to a global stable manifold
$\mathcal{W}^{-}$defined by%
\[
\mathcal{W}^{-}:=\left\{  (u,s_{0},s)\in\mathcal{U}\times\mathbb{S}%
^{n,0}\times\mathbb{S}^{n}\left\vert
\begin{array}
[c]{c}%
\exists T\geq0:\\
(u(T+\cdot),\pi\varphi^{1}(T,s_{0},0),\pi\varphi^{1}(T,s,u))\in\mathcal{W}%
_{loc}^{-}%
\end{array}
\right.  \right\}  .
\]
Note that, for $t\rightarrow-\infty$, the $\alpha$-limit set of $\pi
\varphi^{1}(t,s,u),s\in\mathbb{S}^{n,+}$, is contained in one of the central
chain control sets $_{\mathbb{S}}E_{c}^{j},j=0$ or $j=1,2$, or in one of the
sets $_{\mathbb{S}}L(\lambda_{i})_{j}^{\infty}\subset\mathbb{S}^{n,0},j=0$ or
$j=1,2,\lambda_{i}\not =0$; cf. Corollary \ref{Corollary_limit}.

The following corollary is immediate.

\begin{corollary}
In the situation of Corollary \ref{Corollary_stable_man} the global stable
manifold $\mathcal{W}^{-}$ satisfies (\ref{alpha}) for every $(u,s_{0}%
,s)\in\mathcal{W}^{-}$ and it is invariant under the control flow $\pi\Phi
^{1}$, i.e., for $s\in\mathcal{W}^{-}(u,s_{0})$ it holds that $\pi\varphi
^{1}(t,s,u)\in\mathcal{W}^{-}(u(t+\cdot),\pi\varphi^{1}(t,s_{0},0))$%
\ for\ all$\;t\in\mathbb{R}$.
\end{corollary}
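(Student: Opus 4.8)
The plan is to reduce everything to Corollary~\ref{Corollary_stable_man} via the cocycle property of $\pi\varphi^{1}$ and the fact that on the equator $\mathbb{S}^{n,0}$ the control acts trivially, so that $\pi\varphi^{1}(t,s_{0},0)=\pi\varphi^{1}(t,s_{0},u)$ for all $s_{0}\in\mathbb{S}^{n,0}$ and all $u\in\mathcal{U}$. First I would record that $\mathcal{W}^{-}$ is consistently defined: if $(u,s_{0},s)\in\mathcal{W}^{-}$ with witness $T\geq0$, then by positive invariance of $\mathcal{W}_{loc}^{-}$ (Corollary~\ref{Corollary_stable_man}(iii)) every $T''\geq T$ is again a witness, and flowing the triple by any $T'\in[0,T]$ still reaches $\mathcal{W}_{loc}^{-}$ after the remaining time $T-T'$; this is what makes the next two steps work.

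Next I would prove the exponential estimate~(\ref{alpha}). Given $(u,s_{0},s)\in\mathcal{W}^{-}$, choose $T\geq0$ with $(u(T+\cdot),\pi\varphi^{1}(T,s_{0},0),\pi\varphi^{1}(T,s,u))\in\mathcal{W}_{loc}^{-}$ and apply Corollary~\ref{Corollary_stable_man}(i) to this triple. The cocycle identity $\pi\varphi^{1}(t,\pi\varphi^{1}(T,s,u),u(T+\cdot))=\pi\varphi^{1}(t+T,s,u)$ and the analogous identity for $s_{0}$, where the trivial action of $u$ on $\mathbb{S}^{n,0}$ lets me replace $u(T+\cdot)$ by $0$, turn the estimate into $\lim_{t\to\infty}e^{-\alpha t}d(\pi\varphi^{1}(t+T,s,u),\pi\varphi^{1}(t+T,s_{0},0))=0$ for every $\alpha\in(\kappa(_{\mathbb{S}}\mathcal{V}^{-}),0)$. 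Substituting $\tau=t+T$ and dividing by the positive constant $e^{\alpha T}$ gives exactly~(\ref{alpha}), once more using $\pi\varphi^{1}(\tau,s_{0},0)=\pi\varphi^{1}(\tau,s_{0},u)$.

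For invariance I would fix $(u,s_{0},s)\in\mathcal{W}^{-}$ with witness $T\geq0$ and $t\in\mathbb{R}$, and show $(u(t+\cdot),\pi\varphi^{1}(t,s_{0},0),\pi\varphi^{1}(t,s,u))\in\mathcal{W}^{-}$; this is precisely the claim $\pi\varphi^{1}(t,s,u)\in\mathcal{W}^{-}(u(t+\cdot),\pi\varphi^{1}(t,s_{0},0))$. By the cocycle property, flowing this triple forward by $T'\geq0$ produces $(u(t+T'+\cdot),\pi\varphi^{1}(t+T',s_{0},0),\pi\varphi^{1}(t+T',s,u))$, so I only need some $T'\geq0$ with $t+T'\geq0$ for which this lands in $\mathcal{W}_{loc}^{-}$. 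If $t\leq T$, take $T'=T-t\geq0$ and use the witness directly; if $t>T$, take $T'=0$ and use positive invariance of $\mathcal{W}_{loc}^{-}$ (Corollary~\ref{Corollary_stable_man}(iii)) to flow the time-$T$ triple forward by $t-T>0$ inside $\mathcal{W}_{loc}^{-}$. These two cases exhaust $t\in\mathbb{R}$, and since $T'=T-t\geq0$ exactly when $t\le T$, positive invariance is only invoked for nonnegative times.

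None of the steps is genuinely hard; the one point that needs care is the bookkeeping with the shift $u\mapsto u(T+\cdot)$ together with the consistent substitution $\pi\varphi^{1}(t,s_{0},u)=\pi\varphi^{1}(t,s_{0},0)$ on the equator, since $\mathcal{W}_{loc}^{-}$ is phrased with the shifted control in the fiber over $\pi\varphi^{1}(t,s_{0},0)$ while the flow $\pi\Phi^{1}$ transports the original $u$. I expect this matching, rather than any analytic estimate, to be the only place where an error could slip in.
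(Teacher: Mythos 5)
Your proof is correct. The paper offers no written argument — it simply declares the corollary ``immediate'' — and what you have done is precisely the unfolding of the definitions the authors had in mind: the cocycle identity $\pi\varphi^{1}(t+T,s,u)=\pi\varphi^{1}(t,\pi\varphi^{1}(T,s,u),u(T+\cdot))$, the observation that $u$ acts trivially on $\mathbb{S}^{n,0}$ so that $\pi\varphi^{1}(\cdot,s_{0},u)=\pi\varphi^{1}(\cdot,s_{0},0)$, and positive invariance of $\mathcal{W}_{loc}^{-}$ from Corollary~\ref{Corollary_stable_man}(iii). Your substitution $\tau=t+T$ with the constant factor $e^{\alpha T}$ absorbing the shift is the right way to recover (\ref{alpha}), and the case split $t\le T$ versus $t>T$ in the invariance argument covers all $t\in\mathbb{R}$ cleanly, with positive invariance invoked only for nonnegative flow times as required. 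The bookkeeping you flag at the end — matching the shifted control $u(T+\cdot)$ in the fiber with the base point $\pi\varphi^{1}(T,s_{0},0)$ — is indeed the only place one could slip, and you handle it correctly.
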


Next we analyze the consequences for the original linear control system on
$\mathbb{R}^{n}$. Let $\Delta(x)=\sqrt{1+x_{1}^{2}+\cdots+x_{n}^{2}}$ and%
\begin{align}
\phi^{+}  &  :\mathbb{R}^{n}\rightarrow\mathbb{S}^{n,+},\phi^{+}%
(x)=\Delta(x)^{-1}(x_{1},\ldots,x_{n},1),\nonumber\\
\left(  \phi^{+}\right)  ^{-1}  &  :\mathbb{S}^{n,+}\rightarrow\mathbb{R}%
^{n},\left(  \phi^{+}\right)  ^{-1}(y_{1},\ldots,y_{n+1})=\left(  \frac{y_{1}%
}{y_{n+1}},\ldots,\frac{y_{n}}{y_{n+1}}\right)  . \label{phi+}%
\end{align}
This holds since $\phi^{+}$ and $\left(  \phi^{+}\right)  ^{-1}$ are bijective
and%
\[
\left(  \phi^{+}\right)  ^{-1}(\phi^{+}(x))=\left(  \phi^{+}\right)
^{-1}\left(  \frac{1}{\Delta(x)}(x_{1},\ldots,x_{n},1)\right)  =(x_{1}%
,\ldots,x_{n}).
\]
For any neighborhood $N\subset\mathbb{S}^{n}$ of $s_{0}\in\mathbb{S}^{n,0}$
one has that $\left(  \phi^{+}\right)  ^{-1}(N\cap\mathbb{S}^{n,+}%
)\not =\varnothing$. Thus for a stable manifold $\mathcal{W}_{loc}^{-}$ and
$(u,s_{0})\in\mathcal{U}\times\,_{\mathbb{S}}L(\lambda_{i_{0}})^{\infty}$, we
define%
\[
W_{loc}^{-}:=\left(  \phi^{+}\right)  ^{-1}\left(  \mathcal{W}_{loc}^{-}%
\cap\mathbb{S}^{n,+}\right)  \subset\mathbb{R}^{n},W_{loc}^{-}(u,s_{0}%
):=\left(  \phi^{+}\right)  ^{-1}\left(  \mathcal{W}^{-}(u,s_{0}%
)\cap\mathbb{S}^{n,+}\right)
\]
and analogously $W^{-}$ and $W^{-}(u,s_{0})$. For $\lambda_{i_{0}}<0$ the
stable subbundle $_{\mathbb{S}}\mathcal{V}^{-}$ given by (\ref{V_inv2}) does
not contain $_{\mathbb{S}}\mathcal{V}_{c}$, and hence $\mathcal{W}_{loc}%
^{-}(u,s_{0})$ is Lipschitz close to $\mathbb{S}^{n,0}$ (or even contained in
$\mathbb{S}^{n,0}$). Since the intersection with $\mathbb{S}^{n,+}$ is
relevant, we restrict attention to the case $\lambda_{i_{0}}>0$ where, by
(\ref{V_inv1}), the subbundle $_{\mathbb{S}}\mathcal{V}_{c}\subset
\,_{\mathbb{S}}\mathcal{V}^{-}$, and hence $\mathcal{W}^{-}(u,s_{0}%
)\cap\mathbb{S}^{n,+}$ is nonvoid.

In Corollary \ref{Corollary_stable_man}(i), the convergence of $\pi\varphi
^{1}(t,s,u)$ to $\pi\varphi^{1}(t,s_{0},u)$ means that the direction of the
corresponding trajectory $\varphi^{1}(t,s,u)$ converges to the direction of
$\varphi^{1}(t,s_{0},u)$ (the directions are given by the line through the
origin and the point on the trajectory). One easily sees that in
$\mathbb{R}^{n}$ this implies that also the direction of $\varphi(t,s,u)$
converges to the direction of $\varphi(t,s_{0},u)$.

The following example shows what may be expected for the trajectories, not
just their directions.

\begin{example}
Consider $u=0$ and let $z_{0}\in\mathbb{R}^{n}$ with $\left\Vert
z_{0}\right\Vert =1$ be\thinspace an eigenvector of $A$ for an eigenvalue
$\lambda_{i_{0}}>0$. Then $s_{0}:=(z_{0},0)\in\mathbb{S}^{n,0}$ is an
equilibrium of the induced flow on $\mathbb{S}^{n,0}$. We have
\[
\varphi(t,x,0)=e^{At}x,t\geq0,x\in\mathbb{R}^{n}\text{, and }e^{At}%
z_{0}=e^{\lambda_{i_{0}}t}z_{0},
\]
and obtain on $\mathbb{S}^{n}$%
\begin{align*}
\pi\varphi^{1}(t,\pi(z_{0},1),0)  &  =\phi^{+}(e^{At}z_{0})=\Delta(e^{At}%
z_{0})^{-1}(e^{At}z_{0},1)=\Delta(e^{\lambda_{i_{0}}t}z_{0})^{-1}%
(e^{\lambda_{i_{0}}t}z_{0},1)\\
&  \rightarrow(z_{0},0)=s_{0}.
\end{align*}
This shows that $\pi(z_{0},1)\in\mathcal{W}^{-}(0,s_{0})\cap\mathbb{S}^{n,+}$.
Define%
\[
s_{1}=\pi(z_{0},1)\text{ and }s_{2}=\pi\varphi^{1}(\tau,s_{1},0)\text{ for
some }\tau>0.
\]
It follows that $s_{1},s_{2}\in\mathcal{W}^{-}(0,s_{0})\cap\mathbb{S}^{n,+}$
due to invariance of $\mathcal{W}^{-}(0,s_{0})$ and $\mathbb{S}^{n,+}$. Define
$z_{1}:=z_{0},z_{2}=\left(  \phi^{+}\right)  ^{-1}(s_{1})=\varphi(\tau
,z_{1},0)$. The points $z_{1}$ and $z_{2}$ satisfy%
\begin{align*}
\left\Vert \varphi(t,z_{1},0)-\varphi(t,z_{2},0)\right\Vert  &  =\left\Vert
e^{At}z_{0}-e^{A(t+\tau)}z_{0}\right\Vert =\left\Vert e^{\lambda_{i_{0}}%
t}z_{0}-e^{\lambda_{i_{0}}(t+\tau)}z_{0}\right\Vert \\
&  =e^{\lambda_{i_{0}}t}\left\Vert z_{0}-e^{\lambda_{i_{0}}\tau}%
z_{0}\right\Vert .
\end{align*}
Thus the distance $\left\Vert \varphi(t,z_{1},0)-\varphi(t,z_{2},0)\right\Vert
$ grows with $e^{\lambda_{i_{0}}t}$. By stability on $\mathbb{S}^{n}$, it
follows for $\alpha\in(\kappa(_{\mathbb{S}}\mathcal{V}^{-}),0)\subset
(-\infty,0)$,%
\begin{align*}
&  d(\pi\varphi^{1}(t,s_{1},0),\pi\varphi^{1}(t,s_{2},0))\\
&  \leq d(\pi\varphi^{1}(t,s_{1},0),\pi\varphi^{1}(t,s_{0},0))+d(\pi
\varphi^{1}(t,s_{0},0),\pi\varphi^{1}(t,s_{2},0))\\
&  \leq e^{-\alpha t}\left[  d(s_{1},s_{0})+d(s_{0},s_{2})\right]
\rightarrow0\text{ for }t\rightarrow\infty.
\end{align*}
For $t\rightarrow\infty$, the points on $\mathcal{W}^{-}(0,s_{0}%
)\cap\mathbb{S}^{n,+}$ converge exponentially, while the points on the
preimage $W^{-}(0,s_{0})=\left(  \phi^{+}\right)  ^{-1}\left(  \mathcal{W}%
^{-}(0,s_{0})\cap\mathbb{S}^{n,+}\right)  \subset\mathbb{R}^{n}$ diverge
exponentially with $e^{\lambda_{i_{0}}t}$. Note that for $t^{\prime
}(t):=t-\tau,t\geq\tau$, one obtains $\left\Vert \varphi(t,z_{1}%
,0)-\varphi(t^{\prime}(t),z_{2},0)\right\Vert =0$.
\end{example}

In the general situation, we obtain the following result.

\begin{theorem}
\label{Theorem_R}Let the assumptions of Corollary \ref{Corollary_stable_man}
be satisfied and fix $s_{0}\in\,_{\mathbb{S}}L(\lambda_{i_{0}})^{\infty
}\subset\mathbb{S}^{n,0}$ with $\lambda_{i_{0}}>0$.

(i) For every $(u,s_{0})\in\mathcal{U}\times\,_{\mathbb{S}}L(\lambda_{i_{0}%
})^{\infty}$, the dimension of $W_{loc}^{-}(u,s_{0})\subset\mathbb{R}^{n}$
satisfies $\dim W_{loc}^{-}(u,s_{0})\leq\dim$ $_{\mathbb{S}}\mathcal{V}^{-}$.
For all $z\in W^{-}(u,s_{0})$ it holds that $\left\Vert \varphi
(t,z,u)\right\Vert \rightarrow\infty$ for $t\rightarrow\infty$ and the sets
$W^{-}(u,s_{0})$ are invariant in the sense that%
\[
\varphi(t,z,u)\in W^{-}(u(t+\cdot),\pi\varphi^{1}(t,s_{0},u))\text{ for all
}t\in\mathbb{R}.
\]

(ii) For $j=1,2$, let $z_{j}:=\left(  \phi^{+}\right)  ^{-1}(s_{j})$ with
$s_{j}\in\mathcal{W}^{-}(u,s_{0})\cap\mathbb{S}^{n,+}$. Then, for $t>0$ large
enough, there is $t^{\prime}(t)>0$ with $t^{\prime}(t)\rightarrow\infty$ for
$t\rightarrow\infty$ such that%
\[
\left\Vert \varphi(t,z_{1},u)-\varphi(t^{\prime}(t),z_{2},u)\right\Vert
\leq\left\Vert \varphi^{1}(t,z_{1},1,u)\right\Vert d(\pi\varphi^{1}%
(t,s_{1},u),\pi\varphi^{1}(t^{\prime}(t),s_{2},u)).
\]

\end{theorem}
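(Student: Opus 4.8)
The plan is to transport everything from the northern hemisphere $\mathbb{S}^{n,+}$ back to $\mathbb{R}^{n}$ through the diffeomorphism $\phi^{+}$ of (\ref{phi+}), using two facts. First, $\phi^{+}$ coincides with $h^{1}=\pi(\cdot,1)$, so by (\ref{Fi_1a}) together with $\varphi^{1}(t,x,1,u)=(\varphi(t,x,u),1)$ one has $\pi\varphi^{1}(t,\phi^{+}(x),u)=\phi^{+}(\varphi(t,x,u))$ and the set $\mathbb{S}^{n,+}$ is $\pi\Phi^{1}$-invariant (cf. Proposition \ref{Proposition_e}(ii)). Second, the last coordinate of $\phi^{+}(x)$ equals $\Delta(x)^{-1}$, so $\phi^{+}(x_{k})$ approaches the equator $\mathbb{S}^{n,0}$ if and only if $\|x_{k}\|\to\infty$. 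I shall also use that $\mathbb{S}^{n,0}$ is invariant and that the control acts trivially there, so $\pi\varphi^{1}(t,s_{0},u)=\pi\varphi^{1}(t,s_{0},0)\in\mathbb{S}^{n,0}$ for all $t$.

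For (i), the dimension bound is immediate: $W_{loc}^{-}(u,s_{0})$ is the image under the homeomorphism $(\phi^{+})^{-1}$ of a subset of the stable manifold $\mathcal{W}_{loc}^{-}(u,s_{0})$, whose topological dimension equals $\dim{}_{\mathbb{S}}\mathcal{V}^{-}$ by Corollary \ref{Corollary_stable_man}(ii), and topological dimension of a separable metric space does not increase under passing to subsets or homeomorphic images. For the divergence, fix $z\in W^{-}(u,s_{0})$, so $(u,s_{0},\phi^{+}(z))\in\mathcal{W}^{-}$; by (\ref{alpha}), valid along the global stable manifold, $d(\pi\varphi^{1}(t,\phi^{+}(z),u),\pi\varphi^{1}(t,s_{0},u))\to0$ as $t\to\infty$. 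The second argument stays on $\mathbb{S}^{n,0}$ and the first equals $\phi^{+}(\varphi(t,z,u))$, so $\phi^{+}(\varphi(t,z,u))$ approaches $\mathbb{S}^{n,0}$, forcing its last coordinate $\Delta(\varphi(t,z,u))^{-1}$ to $0$, i.e. $\|\varphi(t,z,u)\|\to\infty$. The invariance of $W^{-}(u,s_{0})$ follows by combining the invariance of the global stable manifold $\mathcal{W}^{-}$ (the corollary preceding this theorem) and of $\mathbb{S}^{n,+}$ with the conjugacy: for any $t$, $\pi\varphi^{1}(t,\phi^{+}(z),u)=\phi^{+}(\varphi(t,z,u))\in\mathcal{W}^{-}(u(t+\cdot),\pi\varphi^{1}(t,s_{0},0))\cap\mathbb{S}^{n,+}$, and since $\pi\varphi^{1}(t,s_{0},0)=\pi\varphi^{1}(t,s_{0},u)$ this reads $\varphi(t,z,u)\in W^{-}(u(t+\cdot),\pi\varphi^{1}(t,s_{0},u))$.

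For (ii), since $s_{j}\in\mathcal{W}^{-}(u,s_{0})\cap\mathbb{S}^{n,+}$ the points $z_{j}=(\phi^{+})^{-1}(s_{j})$ lie in $W^{-}(u,s_{0})$, so by (i) $\|\varphi(t,z_{j},u)\|\to\infty$. The map $g(\tau):=\|\varphi(\tau,z_{2},u)\|$ is continuous with $g(0)=\|z_{2}\|$ and $g(\tau)\to\infty$; hence for every $t$ with $\|\varphi(t,z_{1},u)\|>\|z_{2}\|$, that is for all large $t$, the intermediate value theorem yields $t'(t)>0$ with $g(t'(t))=\|\varphi(t,z_{1},u)\|$, and any such choice satisfies $t'(t)\to\infty$ (a bounded subsequence $t'(t_{k})$ with $t_{k}\to\infty$ would keep $g(t'(t_{k}))=\|\varphi(t_{k},z_{1},u)\|$ bounded). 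Writing $a=\varphi(t,z_{1},u)$ and $b=\varphi(t'(t),z_{2},u)$ we have $\|a\|=\|b\|$, and by (\ref{Fi_1a}) with $s_{j}=\pi(z_{j},1)$,
\[
\pi\varphi^{1}(t,s_{1},u)=\frac{(a,1)}{\|(a,1)\|},\qquad \pi\varphi^{1}(t'(t),s_{2},u)=\frac{(b,1)}{\|(b,1)\|},\qquad \|(a,1)\|=\|(b,1)\|,
\]
so the difference of these two points of $\mathbb{S}^{n}$ equals $(a-b,0)/\|(a,1)\|$, of norm $\|a-b\|/\|\varphi^{1}(t,z_{1},1,u)\|$. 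Since the chordal distance on $\mathbb{S}^{n}$ is dominated by the Riemannian distance $d$, multiplying by $\|\varphi^{1}(t,z_{1},1,u)\|$ gives the asserted inequality.

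All steps are elementary; the one point requiring care is the time change in (ii), namely verifying both that the norm-matching time $t'(t)$ exists and that it diverges, and noting that equalizing the two last coordinates makes the difference of the lifted points purely ``horizontal'', which is precisely what yields the factor $\|\varphi^{1}(t,z_{1},1,u)\|$ rather than a weaker bound. One should also keep the identifications $\phi^{+}=h^{1}$ and Proposition \ref{Proposition_e}(ii) applied consistently, so that $\pi\Phi^{1}$-orbits in $\mathbb{S}^{n,+}$ match $\Phi$-orbits in $\mathbb{R}^{n}$; the divergence in (i) is what makes the intersections with $\mathbb{S}^{n,+}$ in the definitions of $W^{-}$ and $W_{loc}^{-}$ relevant.
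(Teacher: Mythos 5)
Your proof is correct and follows essentially the same route as the paper: transport via the diffeomorphism $\phi^{+}$, use the conjugacy $\pi\varphi^{1}(t,\phi^{+}(x),u)=\phi^{+}(\varphi(t,x,u))$ to get invariance and the dimension bound, read off divergence of $\|\varphi(t,z,u)\|$ from convergence of the projected trajectory to $\mathbb{S}^{n,0}$, and for (ii) choose $t'(t)$ to equalize the last coordinates so the difference of the spherical points becomes purely horizontal, yielding the factor $\|\varphi^{1}(t,z_{1},1,u)\|$. You are in fact slightly more careful than the paper at two points — you argue that the limit of $\phi^{+}(\varphi(t,z,u))$ lies on the equator rather than claiming convergence to $s_{0}$ itself, and you explicitly verify $t'(t)\to\infty$ via a bounded-subsequence contradiction — but the underlying mechanism is identical.
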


\begin{proof}
(i) The invariance property follows from invariance of $\mathcal{W}%
^{-}(u,s_{0})$ and $\mathbb{S}^{n,+}$ and the conjugacy property of $\phi^{+}%
$. The assertion $\dim W_{loc}^{-}(u,s_{0})\leq\dim$ $_{\mathbb{S}}%
\mathcal{V}^{-}$ holds since $\phi^{+}$ is a diffeomorphism and $\phi
^{+}(W_{loc}^{-}(u,s_{0}))\subset\mathcal{W}_{loc}^{-}(u,s_{0})$. Furthermore,
$\left\Vert \varphi(t,z,u)\right\Vert \rightarrow\infty$ since $\phi
^{+}(\varphi(t,z,u))\rightarrow s_{0}\in\mathbb{S}^{n,0}$.

(ii) Corollary \ref{Corollary_stable_man} and the inequality%
\begin{align*}
&  d(\pi\varphi^{1}(t,s_{1},u),\pi\varphi^{1}(t,s_{2},u))\\
&  \leq d(\pi\varphi^{1}(t,s_{1},u),\pi\varphi^{1}(t,s_{0},u))+d(\pi
\varphi^{1}(t,s_{0},u),\pi\varphi^{1}(t,s_{2},u)),
\end{align*}
imply that, for\ every\ $\alpha\in\left(  \kappa(_{\mathbb{S}}\mathcal{V}%
^{-}),0\right)  $,
\begin{equation}
\lim_{t\rightarrow\infty}e^{-\alpha t}d(\pi\varphi^{1}(t,s_{1},u),\pi
\varphi^{1}(t,s_{2},u))=0. \label{limit1}%
\end{equation}
For $j=1,2$, formula (\ref{phi+}) shows that the components of $\pi\varphi
^{1}(t,s_{j},u)$ satisfy, for $t\geq0$,%
\[
\varphi(t,z_{j},u)=\left(  \phi^{+}\right)  ^{-1}\left(  \pi\varphi
^{1}(t,s_{j},u)\right)  =\left(  \pi\varphi^{1}(t,s_{j},u)_{n+1}\right)
^{-1}\left(  \pi\varphi^{1}(t,s_{j},u)_{i}\right)  _{i=1}^{n}.
\]
With $\pi(z_{j},1)=s_{j}$, we have%
\[
\pi\varphi^{1}(t,s_{j},u)_{n+1}=\frac{\varphi^{1}(t,z_{j},1,u)_{n+1}%
}{\left\Vert \varphi^{1}(t,z_{j},1,u)\right\Vert }=\frac{1}{\left\Vert
\varphi^{1}(t,z_{j},1,u)\right\Vert }%
\]
implying%
\begin{equation}
\varphi(t,z_{j},u)=\left\Vert \varphi^{1}(t,z_{j},1,u)\right\Vert \left(
\pi\varphi^{1}(t,\pi(z_{j},1),u)_{i}\right)  _{i=1}^{n}. \label{s_z}%
\end{equation}
We claim that, for $t>0$ large enough, there is $t^{\prime}(t)>0$ with
$\left\Vert \varphi^{1}(t,z_{1},1,u)\right\Vert =\left\Vert \varphi
^{1}(t^{\prime}(t),z_{2},1,u)\right\Vert $. In fact, we know that $\left\Vert
\varphi^{1}(t,z_{j},1,u)\right\Vert \rightarrow\infty$ for $t\rightarrow
\infty,j=1,2$. This implies that $\left\Vert \varphi^{1}(t,z_{1}%
,1,u)\right\Vert >\left\Vert (z_{2},1)\right\Vert $ for $t$ large enough, and
hence $t^{\prime}(t)$ exists. Thus%
\begin{align*}
&  \left\Vert \varphi(t,z_{1},u)-\varphi(t^{\prime}(t),z_{2},u)\right\Vert \\
&  =\left\Vert \left\Vert \varphi^{1}(t,z_{1},1,u)\right\Vert \left(
\pi\varphi^{1}(t,s_{1},u)\right)  _{i=1}^{n}\right.  \left.  -\left\Vert
\varphi^{1}(t^{\prime}(t),z_{2},1,u)\right\Vert \left(  \pi\varphi
^{1}(t^{\prime}(t),s_{2},1),u)\right)  _{i=1}^{n}\right\Vert \\
&  \leq\left\Vert \varphi^{1}(t,z_{1},1,u)\right\Vert d(\pi\varphi^{1}%
(t,s_{1},u),\pi\varphi^{1}(t^{\prime}(t),s_{2},u)).
\end{align*}

\end{proof}

\begin{remark}
Note that%
\begin{align*}
&  d(\pi\varphi^{1}(t,s_{1},u),\pi\varphi^{1}(t^{\prime}(t),s_{2},u))\\
&  \leq d(\pi\varphi^{1}(t,s_{1},u),\pi\varphi^{1}(t,s_{2},u))+d(\pi
\varphi^{1}(t,s_{2},u),\pi\varphi^{1}(t^{\prime}(t),s_{2},u)).
\end{align*}
We know that $\lim_{t\rightarrow\infty}e^{\alpha t}\left\Vert \pi\varphi
^{1}(t,s_{1},u)-\pi\varphi^{1}(t,s_{2},u)\right\Vert =0$ for $\alpha\in\left(
\kappa(_{\mathbb{S}}\mathcal{V}^{-}),0\right)  $. If the exponential growth of
$\left\Vert \varphi^{1}(t,z_{1},1,u)\right\Vert $ is smaller than
$-\kappa(_{\mathbb{S}}\mathcal{V}^{-})$ one obtains for the first summand that%
\[
\lim_{t\rightarrow\infty}e^{\alpha t}\left\Vert \varphi^{1}(t,z_{1}%
,1,u)\right\Vert \left\Vert \pi\varphi^{1}(t,s_{1},u)-\pi\varphi^{1}%
(t,s_{2},u)\right\Vert =0.
\]
If $s_{0}$ is not an equilibrium for $\pi\varphi^{1}(\cdot,s_{0},u)$, it may
happen that in the second summand $\pi\varphi^{1}(t,s_{2},u)$ and $\pi
\varphi^{1}(t^{\prime}(t),s_{2},u)$ do not converge for $t\rightarrow\infty$.
In any case, there is $c>0$ such that $\left\Vert \pi\varphi^{1}%
(t,s_{2},u) \right.$ $-$ $\left. \pi\varphi^{1}(t^{\prime}(t),s_{2},u)\right\Vert \leq c$.
Furthermore, compactness of $_{\mathbb{S}}L(\lambda_{i_{0}})^{\infty}$ implies
that for every sequence $t_{k}\rightarrow\infty$ there are $s_{2}^{\prime
},s_{2}^{\prime\prime}\in\,_{\mathbb{S}}L(\lambda_{i_{0}})^{\infty}$ and a
subsequence $t_{k_{i}},i\in\mathbb{N}$, such that%
\[
\pi\varphi^{1}(t_{k_{i}},s_{2},u)\rightarrow s_{2}^{\prime}\text{ and }%
\pi\varphi^{1}(t^{\prime}(t_{k_{i}}),s_{2},u)\rightarrow s_{2}^{\prime\prime
}.
\]

\end{remark}

\begin{remark}
\label{Remark_unstable2}Using time reversal, one can show that the results
derived in this section for stable manifolds have counterparts for unstable
manifolds (cf. Remark \ref{Remark_unstable}). Thus one obtains invariant
manifolds $W^{+}(u,s_{0})$ in $\mathbb{R}^{n}$ consisting of points with
$\left\Vert \varphi(t,z,u)\right\Vert \rightarrow\infty$ for $t\rightarrow
-\infty$. We omit the details.
\end{remark}

\section{Examples\label{Section6}}

In this section we present several examples illustrating the results in the
previous sections.

The following two-dimensional hyperbolic system has been analyzed in Colonius,
Santana, Setti \cite[Example 2]{ColSS24a} and Colonius, Santana, Viscovini
\cite[Example 6.2]{ColSV24},%
\begin{equation}
\left(
\begin{array}
[c]{c}%
\dot{x}_{1}\\
\dot{x}_{2}%
\end{array}
\right)  =\left(
\begin{array}
[c]{cc}%
1 & 0\\
0 & -1
\end{array}
\right)  +\left(
\begin{array}
[c]{c}%
1\\
1
\end{array}
\right)  u(t)\text{ with }u(t)\in\lbrack-1,1]. \label{Example1}%
\end{equation}
For the induced system on the northern hemisphere $\mathbb{S}^{2,+}$ of the
Poincar\'{e} sphere $\mathbb{S}^{2}$, one obtains an asymptotically stable
equilibrium $s_{0}=(1,0,0)\subset\mathbb{S}^{2,0}\simeq\mathbb{S}^{1}$ and an
unstable equilibrium $s_{0}=(0,1,0)$. The phase portrait on $\mathbb{S}^{2,+}$
is sketched in \cite[Fig. 2]{ColSS24a}.

We turn to describe higher dimensional examples, where the matrix $A$ is
hyperbolic and given in different Jordan normal forms. In the classical
treatise Arnol'd \cite[\S \ 21]{ArnV92} one finds graphical illustrations for
the corresponding phase portraits in $\mathbb{R}^{3}$. For hyperbolic matrix
$A$, Theorem \ref{Theorem_cs} implies that the unique chain control set $E$ in
$\mathbb{R}^{n}$ is bounded and coincides with the closure of the control set
$D_{0}$ containing the origin. 

\begin{example}
Consider the system in $\mathbb{R}^{3}$ given by%
\begin{equation}
\left(
\begin{array}
[c]{c}%
\dot{x}_{1}\\
\dot{x}_{2}\\
\dot{x}_{3}%
\end{array}
\right)  =\left(
\begin{array}
[c]{ccc}%
2 & 0 & 0\\
0 & 1 & 0\\
0 & 0 & -1
\end{array}
\right)  \left(
\begin{array}
[c]{c}%
x_{1}\\
x_{2}\\
x_{3}%
\end{array}
\right)  +\left(
\begin{array}
[c]{c}%
1\\
1\\
1
\end{array}
\right)  u(t) , \text{ with }u(t)\in\lbrack-1,1]. \label{Example2}%
\end{equation}
The Lyapunov spaces are, with $\lambda_{1}=2,\lambda_{2}=1$, and $\lambda
_{3}=-1$, given by%
\[
L(2)=\mathbb{R}\times\{0_{2}\} , L(1)=\left\{  0\right\}  \times\mathbb{R}%
\times\left\{  0\right\}  ,\text{and }L(-1)=\{0_{2}\}\times\mathbb{R}.
\]
For $\lambda_{i_{0}}=\lambda_{2}=1>0$ consider the set $_{\mathbb{S}%
}L(1)^{\infty}=\left\{  (0,1,0,0)\right\}  \subset\mathbb{S}^{3,0}%
\simeq\mathbb{S}^{2}$. The point $s_{0}=(0,1,0,0)$ is an equilibrium at
infinity. According to Theorem \ref{Theorem_LinLyap}, for the linearized flow
$T\pi\Phi^{1}$ with base space restricted to $\mathcal{U}\times\,_{\mathbb{S}%
}L(1)^{\infty}$, the one-dimensional subbundle $_{\mathbb{S}}\mathcal{V}%
_{c}\subset T_{_{\mathbb{S}}L(1)^{\infty}}\mathbb{S}^{3}=T_{(0,1,0,0)}%
\mathbb{S}^{3}$ is stable with $\kappa(_{\mathbb{S}}\mathcal{V}_{c}%
)=-\lambda_{i_{0}}=-1$. The central subbundle $\mathcal{V}_{c}$ is determined
by the unique bounded solutions $e(u,\cdot),u\in\mathcal{U}$. The stable
subbundle is $_{\mathbb{S}}\mathcal{V}^{-}=\,_{\mathbb{S}}\mathcal{V}%
_{1}\oplus\,_{\mathbb{S}}\mathcal{V}_{c}$ where%
\begin{align*}
_{\mathbb{S}}\mathcal{V}_{1}  &  =(\mathrm{id}_{\mathcal{U}},T\pi)\left(
\mathcal{U}\times L(\lambda_{1})^{\infty}\right)  =\mathcal{U}\times
\,_{\mathbb{S}}L(1)^{\infty}\times L(2)^{\infty}\\
&  =\mathcal{U}\times\left\{  (0,1,0,0)\right\}  \times\left(  \mathbb{R}%
\times\{0_{2}\}\right)  \subset\mathcal{U}\times T\mathbb{S}^{3,0}.
\end{align*}
The unstable subbundle is
\[
_{\mathbb{S}}\mathcal{V}_{2}=(\mathrm{id}_{\mathcal{U}},T\pi)\left(
\mathcal{U}\times L(\lambda_{3})^{\infty}\right)  =\mathcal{U}\times\left\{
(0,1,0,0)\right\}  \times\left(  \{0_{2}\}\times\mathbb{R}^{2}\right)  .
\]
By Corollary \ref{Corollary_stable_man} for $u\in\mathcal{U}$, the local
stable manifold $\mathcal{W}_{loc}^{-}(u,s_{0})\subset\mathbb{S}^{3}$ is
two-dimensional and every point $s\in\mathcal{W}^{-}(u,s_{0})$ satisfies for
all $\alpha>-1$%
\[
e^{-\alpha t}d\left(  \pi\varphi^{1}(t,s,u),\pi\varphi^{1}(t,s_{0},u)\right)
=e^{-\alpha t}d\left(  \pi\varphi^{1}(t,s,u),s_{0}\right)  \rightarrow0\text{
for }t\rightarrow\infty.
\]
For all $u\in\mathcal{U}$ and all $x\in W^{-}(u,s_{0})=\left(  \phi
^{+}\right)  ^{-1}\left(  \mathcal{W}^{-}(u,s_{0})\cap\mathbb{S}^{3,+}\right)
$ it holds that $\left\Vert \varphi(t,x,u)\right\Vert \rightarrow\infty$ for
$t\rightarrow\infty$.

The local unstable manifold $\mathcal{W}_{loc}^{+}(u,s_{0})\subset
\mathbb{S}^{3}$ (cf. Remark \ref{Remark_unstable2}) is one-dimensional, and
for all $u\in\mathcal{U}$ and all $x\in W^{+}(u,s_{0})=\left(  \phi
^{+}\right)  ^{-1}\left(  \mathcal{W}^{+}(u,s_{0})\cap\mathbb{S}^{3,+}\right)
$ it holds that $\left\Vert \varphi(t,x,u)\right\Vert \rightarrow\infty$ for
$t\rightarrow-\infty$.

For the system with $u(t)\equiv0$, Fig. \ref{Fig1} sketches the phase
portraits on $\mathbb{R}^{3}$ and on $\mathbb{S}^{2}$ (we cannot draw the
phase portrait on $\mathbb{S}^{3}$). The sphere $\mathbb{S}^{2}$ is identified
with the equator $\mathbb{S}^{3,0}$ of the Poincar\'{e} sphere $\mathbb{S}%
^{3}$, hence it represents infinity. There are four equilibria at infinity on
the equator $\mathbb{S}^{2,0}$ of $\mathbb{S}^{2}$ and the poles $(0,0,\pm1)$
of $\mathbb{S}^{2}$ are unstable equilibria at infinity. The equilibrium
$s_{0}=(0,1,0,0)\in\mathbb{S}^{3,0}$ is identified with $(0,1,0)\in
\mathbb{S}^{2,0}\subset\mathbb{S}^{2}$. Its stable and unstable manifolds on
$\mathbb{S}^{2}$ are the half circle between $(0,0,1)$ and $(0,0,-1)$ and the
half circle between $(1,0,0)$ and $(-1,0,0)$, respectively. The set
$W^{-}(0,s_{0})$ is contained in the half-space in $\mathbb{R}^{3}$ spanned by
$(0,1,0)$ and $(0,0,1)$.
\end{example}

In our next example the set $_{\mathbb{S}}L(1)^{\infty}$ is not a minimal set
for the induced flow on $\mathbb{S}^{3,0}$.

\begin{example}
Consider%
\begin{equation}
\left(
\begin{array}
[c]{c}%
\dot{x}_{1}\\
\dot{x}_{2}\\
\dot{x}_{3}%
\end{array}
\right)  =\left(
\begin{array}
[c]{ccc}%
1 & 1 & 0\\
0 & 1 & 0\\
0 & 0 & -1
\end{array}
\right)  \left(
\begin{array}
[c]{c}%
x_{1}\\
x_{2}\\
x_{3}%
\end{array}
\right)  +\left(
\begin{array}
[c]{c}%
1\\
1\\
1
\end{array}
\right)  u(t)\text{ with }u(t)\in\lbrack-1,1]. \label{Example3}%
\end{equation}
The Lyapunov spaces are, with $\lambda_{1}=1$ and $\lambda_{2}=-1$ given by%
\[
L(1)=\mathbb{R}^{2}\times\left\{  0\right\}  \text{ and }L(-1)=\{0_{2}%
\}\times\mathbb{R}.
\]
For $\lambda_{i_{0}}=\lambda_{1}=1>0$ consider the set%
\[
_{\mathbb{S}}L(1)^{\infty}=\left\{  (s_{1},s_{2},0,0)\left\vert s_{1}%
^{2}+s_{2}^{2}=1\right.  \right\}  \subset\mathbb{S}^{3,0}.
\]
When we again identify the equator $\mathbb{S}^{3,0}$ with $\mathbb{S}^{2}$,
the set $_{\mathbb{S}}L(1)^{\infty}$ is identified with the equator
$\mathbb{S}^{2,0}\simeq\mathbb{S}^{1}$ of $\mathbb{S}^{2}$. For the linearized
flow $T\pi\Phi^{1}$ with base space restricted to $\mathcal{U}\times
\,_{\mathbb{S}}L(1)^{\infty}$, the one-dimensional subbundle $_{\mathbb{S}%
}\mathcal{V}_{c}\subset T_{_{\mathbb{S}}L(1)^{\infty}}\mathbb{S}^{3}$ is
stable with $\kappa(_{\mathbb{S}}\mathcal{V}_{c})=-\lambda_{i_{0}}=-1$.
Furthermore, also the subbundle $_{\mathbb{S}}\mathcal{V}_{i_{0}%
}=\,_{\mathbb{S}}\mathcal{V}_{1}$ is one-dimensional and the corresponding
Lyapunov exponents are $0$. The subbundle $_{\mathbb{S}}\mathcal{V}_{2}$ is
stable with Lyapunov exponents equal to $\lambda_{2}-\lambda_{i_{0}}=-1-1=-2$.
Hence the set $\mathcal{U}\times\,_{\mathbb{S}}L(1)^{\infty}$ is stable. For
the system with $u(t)\equiv0$, Fig. \ref{Fig2} sketches the phase portraits on
$\mathbb{R}^{3}$ and on $\mathbb{S}^{2}\simeq\mathbb{S}^{3,0}$. There are two
equilibria $(\pm1,0,0)$ at infinity on the equator $\mathbb{S}^{2,0}$ of
$\mathbb{S}^{2}$ which correspond to the eigenspace $\mathbb{R}\times\{0\}$ of
$\left(
\begin{array}
[c]{cc}%
1 & 1\\
0 & 1
\end{array}
\right)  $. Furthermore, the poles $(0,0,\pm1)$ of $\mathbb{S}^{2}$ are
unstable equilibria at infinity.
\end{example}

In the following example the matrix $A$ has a complex-conjugate pair of
eigenvalues. Here a periodic solution at infinity is obtained.

\begin{example}
Consider%
\begin{equation}
\left(
\begin{array}
[c]{c}%
\dot{x}_{1}\\
\dot{x}_{2}\\
\dot{x}_{3}%
\end{array}
\right)  =\left(
\begin{array}
[c]{ccc}%
1 & 1 & 0\\
-1 & 1 & 0\\
0 & 0 & -1
\end{array}
\right)  \left(
\begin{array}
[c]{c}%
x_{1}\\
x_{2}\\
x_{3}%
\end{array}
\right)  +\left(
\begin{array}
[c]{c}%
1\\
1\\
1
\end{array}
\right)  u(t)\text{ with }u(t)\in\lbrack-1,1]. \label{Example4}%
\end{equation}
The eigenvalues of the matrix $A$ are $\mu_{1,2}=1\pm\imath$ and $\mu_{3}=-1$,
hence the Lyapunov exponents are $\lambda_{1}=1$ and $\lambda_{2}=-1$ with
Lyapunov spaces%
\[
L(1)=\mathbb{R}^{2}\times\{0\}\text{ and }L(-1)=\{0_{2}\}\times\mathbb{R}.
\]
For $\lambda_{i_{0}}=\lambda_{1}=1>0$ consider the set%
\[
_{\mathbb{S}}L(1)^{\infty}=\left\{  (s_{1},s_{2},0,0)\left\vert s_{1}%
^{2}+s_{2}^{2}=1\right.  \right\}  \subset\mathbb{S}^{3,0}.
\]
This set identified with the equator $\mathbb{S}^{2,0}\simeq\mathbb{S}^{1}$ of
$\mathbb{S}^{2}\simeq\mathbb{S}^{3,0}$. Restricted to $_{\mathbb{S}%
}L(1)^{\infty}$ the orbit $\pi\varphi^{1}(\cdot,s,0)$ corresponds to
\[
\dot{x}_{1}(t)=x_{2}(t),\dot{x}_{2}(t)=-x_{1}(t)\text{, hence }\left(
x_{1}(t),x_{2}(t)\right)  =(\sin t,\cos t).
\]
For the linearized flow $T\pi\Phi^{1}$ with base space restricted to
$\mathcal{U}\times\,_{\mathbb{S}}L(1)^{\infty}$, the subbundle $_{\mathbb{S}%
}\mathcal{V}_{c}\subset T_{_{\mathbb{S}}L(1)^{\infty}}\mathbb{S}^{3}$ is
stable with $\kappa(_{\mathbb{S}}\mathcal{V}_{c})=-\lambda_{i_{0}}=-1$. The
subbundle $_{\mathbb{S}}\mathcal{V}_{i_{0}}=\,_{\mathbb{S}}\mathcal{V}_{1}$ is
one-dimensional. and the subbundle $_{\mathbb{S}}\mathcal{V}_{2}$ is stable,
hence the set $\mathcal{U}\times\,_{\mathbb{S}}L(1)^{\infty}$, i.e., the
periodic solution at infinity, is stable. For the system with $u(t)\equiv0$,
Fig. \ref{Fig3} sketches the phase portraits on $\mathbb{R}^{3}$ and on
$\mathbb{S}^{2}\simeq\mathbb{S}^{3,0}$.
\end{example}

In the next example, the periodic solution at infinity has a nontrivial stable manifold.

\begin{example}
Consider%
\begin{equation}
\left(
\begin{array}
[c]{c}%
\dot{x}_{1}\\
\dot{x}_{2}\\
\dot{x}_{3}\\
\dot{x}_{4}%
\end{array}
\right)  =\left(
\begin{array}
[c]{cccc}%
2 & 0 & 0 & 0\\
0 & 1 & 1 & 0\\
0 & -1 & 1 & 0\\
0 & 0 & 0 & -1
\end{array}
\right)  \left(
\begin{array}
[c]{c}%
x_{1}\\
x_{2}\\
x_{3}\\
x_{4}%
\end{array}
\right)  +\left(
\begin{array}
[c]{c}%
1\\
1\\
1\\
1
\end{array}
\right)  u(t)\text{ with }u(t)\in\lbrack-1,1]. \label{Example5}%
\end{equation}
The eigenvalues of the matrix $A$ are $\mu_{1}=2,\mu_{2,3}=1\pm\imath$, and
$\mu_{4}=-1$, hence the Lyapunov exponents are $\lambda_{1}=2,$ $\lambda
_{2}=1$, and $\lambda_{3}=-1$ with Lyapunov spaces%
\[
L(2)=\mathbb{R}\times\left\{  0_{3}\right\}  ,L(1)=\{0\}\times\mathbb{R}%
^{2}\times\{0\}\text{ and }L(-1)=\{0_{3}\}\times\mathbb{R}.
\]
For $\lambda_{i_{0}}=\lambda_{2}=1>0$ consider the set%
\[
_{\mathbb{S}}L(1)^{\infty}=\left\{  (0,s_{2},s_{3},0,0)\left\vert s_{2}%
^{2}+s_{3}^{2}=1\right.  \right\}  \subset\mathbb{S}^{4,0}.
\]
This set is identified with the subset $\{0\}\times\mathbb{S}^{2,0}%
\times\{0\}\simeq\mathbb{S}^{1}$ of $\mathbb{S}^{3}\simeq\mathbb{S}^{4,0}$.
Restricted to $_{\mathbb{S}}L(1)^{\infty}$ the trajectory $\pi\varphi
^{1}(\cdot,s,0)$ corresponds to
\[
\dot{x}_{2}(t)=x_{3}(t),\dot{x}_{3}(t)=-x_{2}(t)\text{, hence }\left(
x_{2}(t),x_{3}(t)\right)  =(\sin t,\cos t).
\]
For the linearized flow $T\pi\Phi^{1}$ with base space restricted to
$\mathcal{U}\times\,_{\mathbb{S}}L(1)^{\infty}$, the subbundle $_{\mathbb{S}%
}\mathcal{V}_{c}\subset T_{_{\mathbb{S}}L(1)^{\infty}}\mathbb{S}^{4}$ is
stable with $\kappa(_{\mathbb{S}}\mathcal{V}_{c})=-\lambda_{i_{0}}=-1$. For
points in the subbundles $_{\mathbb{S}}\mathcal{V}_{1}$ and $_{\mathbb{S}%
}\mathcal{V}_{3}$ the Lyapunov exponents are%
\[
\lambda_{1}-\lambda_{i_{0}}=2-1=1\text{ and }\lambda_{3}-\lambda_{i_{0}%
}=-1-1=-2,
\]
respectively. Hence the stable subbundle is $_{\mathbb{S}}\mathcal{V}%
_{c}\oplus\,_{\mathbb{S}}\mathcal{V}_{3}$ and $_{\mathbb{S}}\mathcal{V}_{1}$
is unstable. Let $s_{0}\in\,_{\mathbb{S}}L(1)^{\infty}$ and $u\in\mathcal{U}$.
The local stable manifold $\mathcal{W}_{loc}^{-}(u,s_{0})\subset\mathbb{S}%
^{4}$ is two-dimensional and every point $s\in\mathcal{W}^{-}(u,s_{0})$
satisfies for all $\alpha\in(-1,0)$%
\[
e^{-\alpha t}d\left(  \pi\varphi^{1}(t,s,u),\pi\varphi^{1}(t,s_{0},0)\right)
\rightarrow0\text{ for }t\rightarrow\infty.
\]
For all $x\in W^{\pm}(u,s_{0})=\left(  \phi^{+}\right)  ^{-1}\left(
\mathcal{W}^{\pm}(u,s_{0})\cap\mathbb{S}^{4,+}\right)  $ it holds that
$\left\Vert \varphi(t,x,u)\right\Vert \rightarrow\infty$ for $t\rightarrow
\pm\infty$.
\end{example}

\section{Declarations}

Funding:

Second author is partially supported by CNPq grant n. 309409/2023-3.

\vspace{0,2cm}

Conflict of interest/Competing interests:

No interests of a financial or personal nature.

\vspace{0,2cm}

Ethics approval:

Not applicable.

\vspace{0,2cm}

Use of AI:

We do not use it anywhere.

\vspace{0,2cm}

Authors' contributions:

All authors contributed to all sections. All authors reviewed the final manuscript.

\begin{figure}[t]
\centering
\includegraphics[scale=0.1]{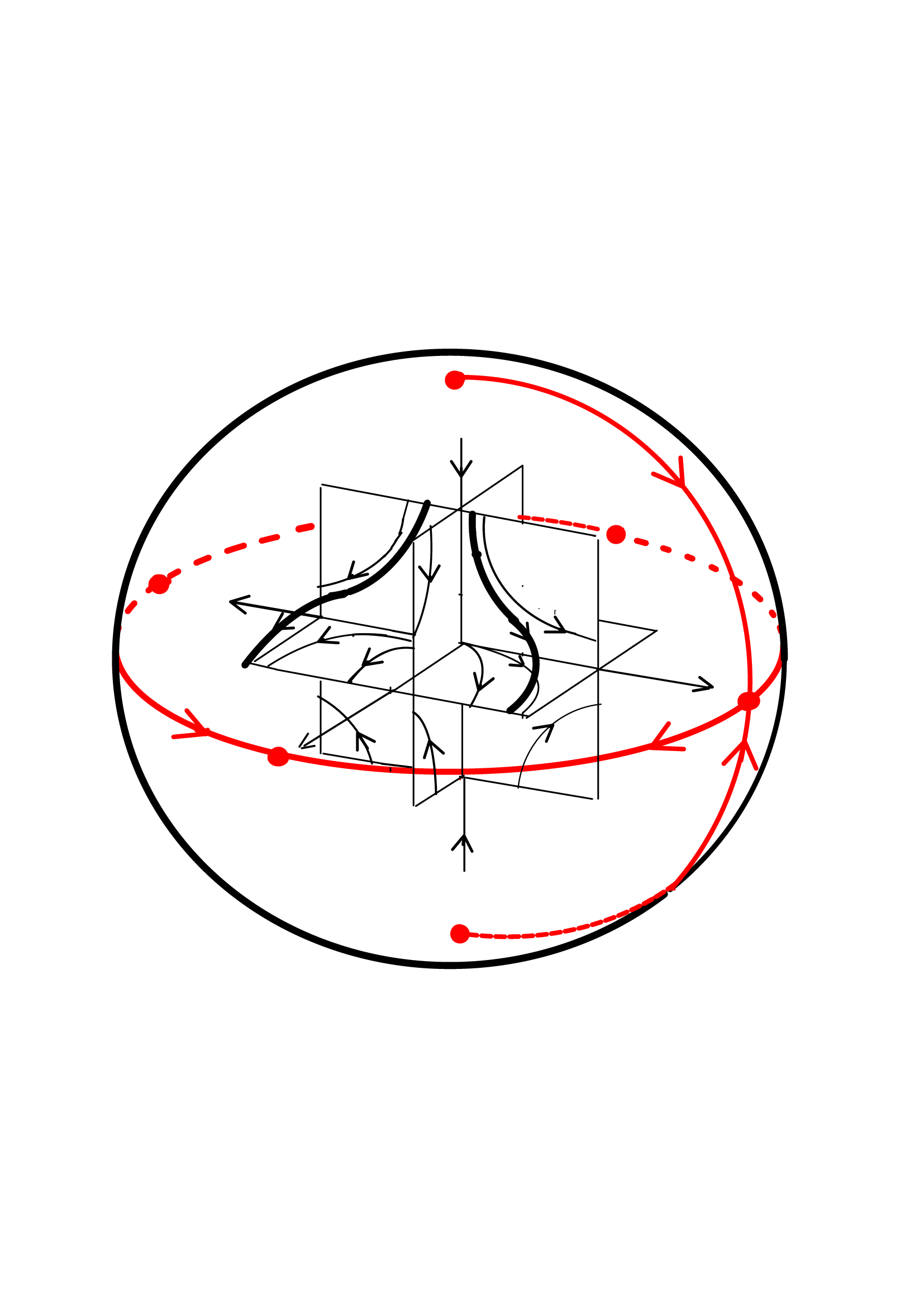}
\caption{ Phase portraits for $u(t)\equiv0$ in $\mathbb{R}^{3}$ and
$\mathbb{S}^{2}\simeq\mathbb{S}^{3,0}$ .}
\label{Fig1}%
\end{figure}

\begin{figure}[t]
\centering
\includegraphics[scale=0.1]{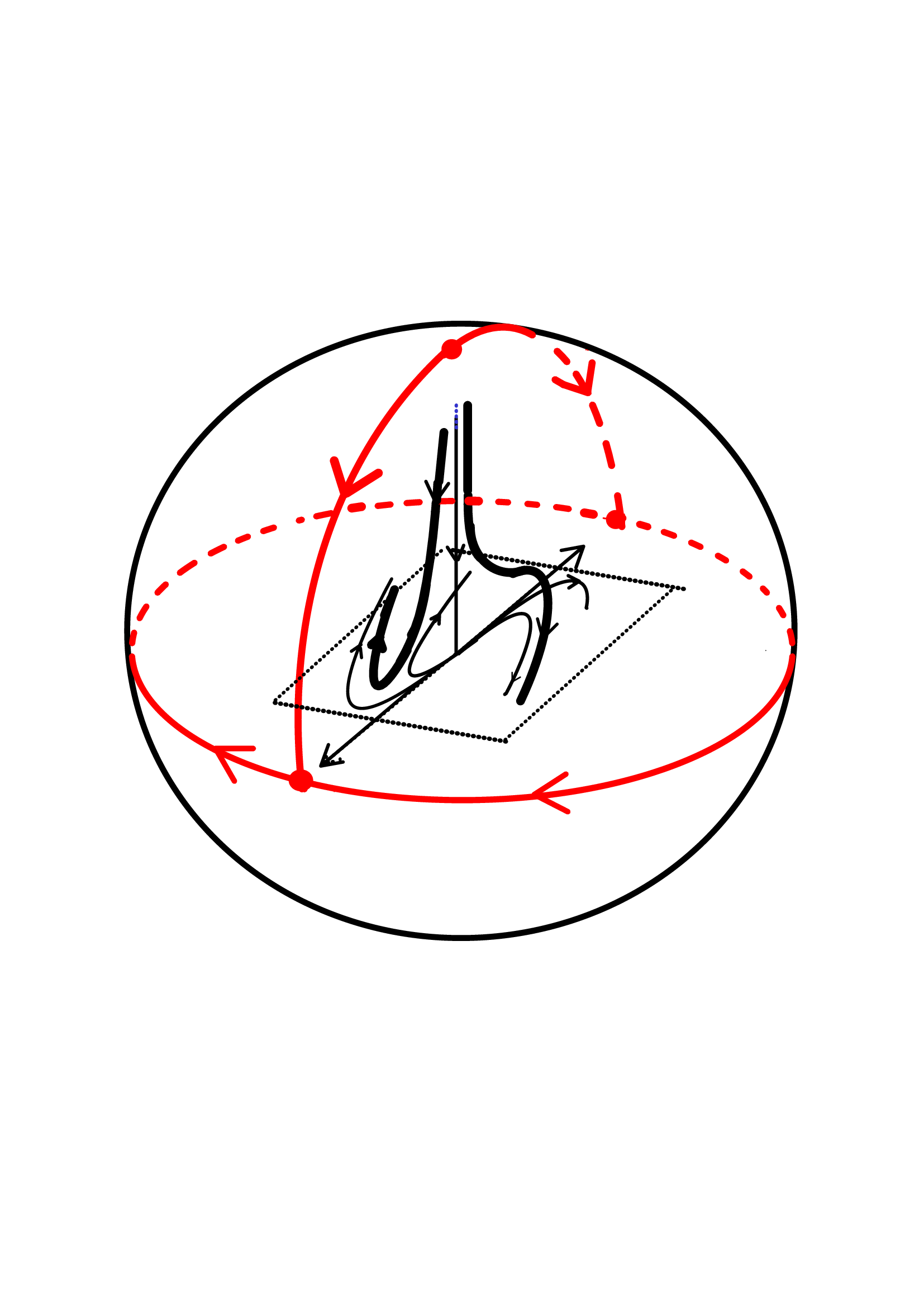}
\caption{  Phase portraits for $u(t)\equiv0$ in $\mathbb{R}^{3}$ and
$\mathbb{S}^{2}\simeq\mathbb{S}^{3,0}$ .}
\label{Fig2}%
\end{figure}

\begin{figure}[t]
\centering
\includegraphics[scale=0.1]{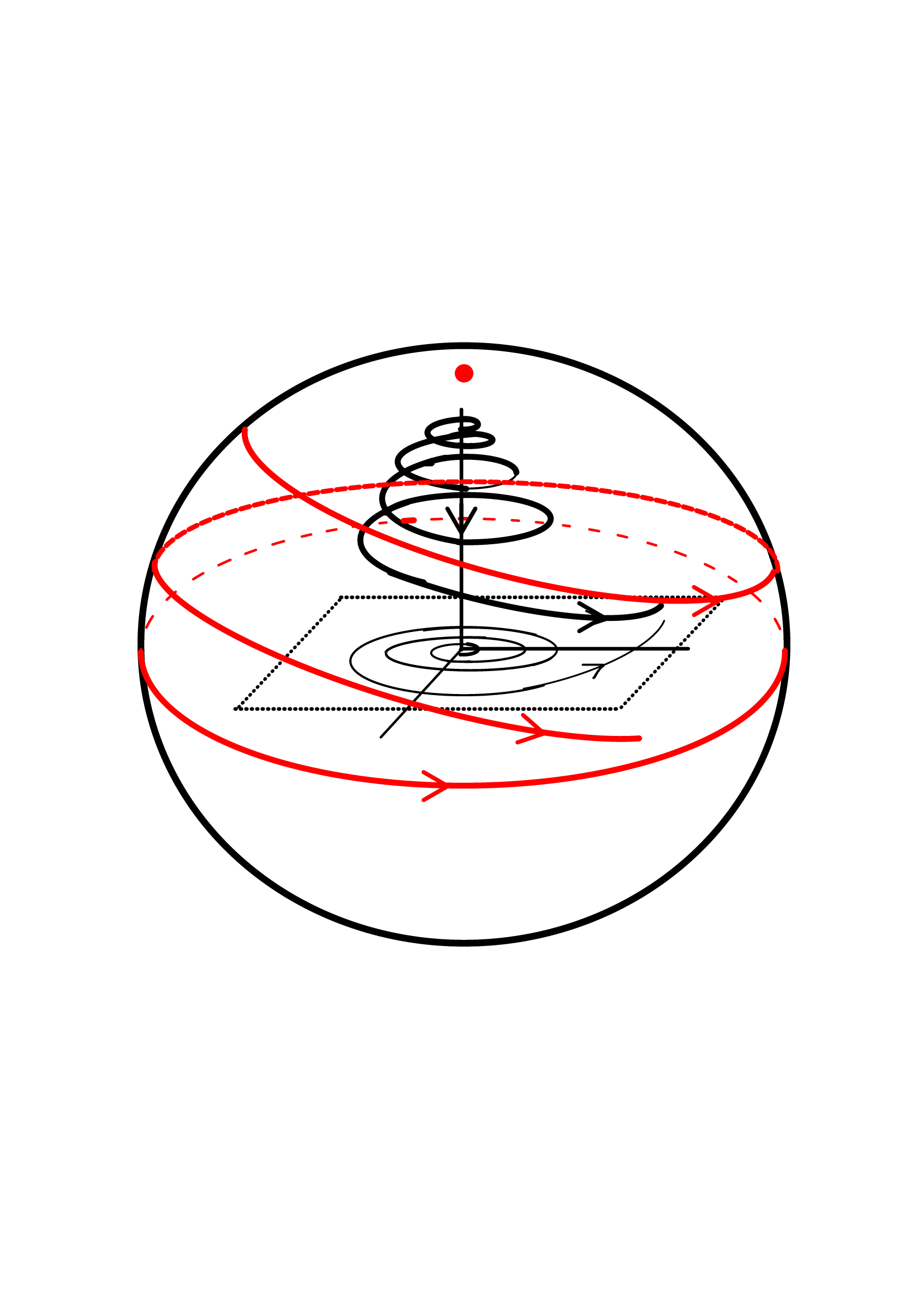}
\caption{ Phase portraits for $u(t)\equiv0$ in $\mathbb{R}^{3}$ and
$\mathbb{S}^{2}\simeq\mathbb{S}^{3,0}$.}
\label{Fig3}%
\end{figure}

\end{document}